\title{Consistency of Nonparametric Density Estimators in CAT(0) Orthant Space}
\author{{\hspace{1mm}Yuki Takazawa} \\
	Graduate School of Information Science and Technology \\ The University of Tokyo \\
        Tokyo, Japan \\
	\texttt{yuki-takazawa@g.ecc.u-tokyo.ac.jp} \\
	\And
	\href{https://orcid.org/0000-0003-2548-0314}{\includegraphics[scale=0.06]{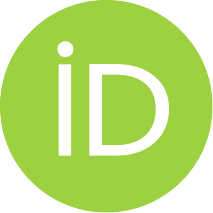}\hspace{1mm}Tomonari Sei} \\
	Graduate School of Information Science and Technology \\ The University of Tokyo \\
    Tokyo, Japan \\
	\texttt{sei@mist.i.u-tokyo.ac.jp} \\
}
\begin{document}
\maketitle

\begin{abstract}
The inference of evolutionary histories is a central problem in evolutionary biology. The analysis of a sample of phylogenetic trees can be conducted in Billera-Holmes-Vogtmann tree space, which is a CAT(0) metric space of phylogenetic trees. The globally non-positively curved (CAT(0)) property of this space enables the extension of various statistical techniques. In the problem of nonparametric density estimation, two primary methods, kernel density estimation and log-concave maximum likelihood estimation, have been proposed, yet their theoretical properties remain largely unexplored.

In this paper, we address this gap by proving the consistency of these estimators in a more general setting—CAT(0) orthant spaces, which include BHV tree space. We extend log-concave approximation techniques to this setting and establish consistency via the continuity of the log-concave projection map. We also modify the kernel density estimator to correct boundary bias and establish uniform consistency using empirical process theory.
\end{abstract}

\keywords{BHV treespace \and kernel density estimation \and log-concave density estimation \and consistency \and nonparametric statistics}

\section{Introduction}

The inference of evolutionary histories is a fundamental problem in biology. Over the years, numerous methods have been developed to reconstruct phylogenetic trees from gene sequence data. The outcome of such analyses is often not a single phylogenetic tree but a collection of trees. A key example is gene tree estimation, where each tree represents the evolutionary history of an individual gene. Because genes do not necessarily share the same evolutionary history, conflicts among gene trees are common, necessitating methods to reconcile them. Another common scenario is bootstrapping, which generates multiple trees as bootstrap estimates, or Bayesian inference, where posterior draws of trees are typically obtained using Markov chain Monte Carlo (MCMC) algorithms.

Traditionally, consensus approaches have been applied to sets of trees to generate a single representative tree. Another approach is to use Billera-Holmes-Vogtmann (BHV) tree space \cite{Billera2001-vl} as the space of phylogenetic trees and to treat a collection of trees as a sample in this space. BHV tree space consists of many Euclidean nonnegative orthants glued together along their boundaries. This is not a standard Euclidean space, which complicates the application of traditional statistical methodologies. However, this space is known to be a globally non-positively curved metric space (CAT(0) space). This property, in particular, ensures the unique existence of geodesics—the shortest paths between two points in this space. Efficient algorithms to compute them are also available \cite{Owen2011-nq}. Several statistical methods have been extended to this space using the CAT(0) property: estimation of the Fr\'echet mean \cite{Benner2014-xx}, construction of confidence sets \cite{Willis2019-fx}, and principal component analysis \cite{Nye2011-is, Nye2017-vx}, among others. Because the complex geometry of BHV tree space makes parametric modeling difficult, most of the statistical methods developed in this setting are nonparametric.

Density estimation provides a way to characterize the distribution of tree samples in BHV tree space, enabling analyses that go beyond representing them by a single consensus tree. Such approaches open up possibilities for detecting biologically meaningful patterns. For example, density estimation has been used to identify outlier gene trees in phylogenomic studies, flagging trees in low-density regions that often correspond to biological anomalies such as horizontal gene transfer or alignment errors. Identifying such trees can help improve downstream analyses such as species tree inference, by reducing potential biases introduced by anomalous data.

Two nonparametric approaches to density estimation in tree space have been proposed: kernel density estimation using Gaussian-type kernels \cite{Weyenberg2014-wd, Weyenberg2017-kh} and the log-concave maximum likelihood method, recently extended to tree space \cite{Takazawa2024-ed}. In practice, kernel density estimation is feasible for moderately large numbers of taxa, though it requires approximations to the normalizing constants. By contrast, current implementations of the log-concave maximum likelihood estimator are restricted to very low-dimensional cases, such as trees with three or four taxa.

Despite these developments, little is known about the theoretical properties of either estimator. In this paper, we establish conditions under which kernel density estimators and log-concave maximum likelihood estimators are consistent in the broader class of CAT(0) orthant spaces, which includes BHV tree space as a special case. For the log-concave maximum likelihood estimator, we use the notion of log-concave approximation proposed by \cite{Dumbgen2011-mq} in Euclidean space and prove consistency via the continuity of the log-concave projection map. For the kernel density estimator, the original form proposed by \cite{Weyenberg2014-wd} suffers from boundary bias; we introduce a modified version to remove this issue and establish its uniform consistency using tools from empirical process theory \cite{Gine2001-be}.

\section{Preliminaries}
\label{sec:preliminaries}
    \subsection{CAT(0) orthant space}
    \subsubsection{Orthant space}

    First, we define the orthant space considered in this study, following \cite{Miller2015-jm}. Let $\mathcal{E}$ be a finite set representing the set of ``axes,'' and let $\Omega \subseteq 2^{\mathcal{E}}$ be a simplicial complex. We call $\Omega$ a simplicial complex if for any $F \in \Omega$ and any subset $G \subseteq F$, we also have $G \in \Omega$.
     Additionally, we assume that every singleton set $\{e\}$ with $e \in \mathcal{E}$ belongs to $\Omega$.
     Now, to each $F \in \Omega$, we associate the nonnegative orthant $\bar{O}_F \coloneqq \mathbb{R}^F_{\geq 0}$. Here, $\mathbb{R}^F_{\geq 0}$ is the $|F|$-dimensional nonnegative orthant generated by the axes in $F$. A point $x \in \mathbb{R}^{|F|}_{\geq 0}$ in $\bar{O}_F$ (denoted as $(x,F)$) is regarded as having zero coordinates on all axes in $\mathcal{E} \setminus F$. Two points $(x,F)$ and $(y,G)$ are identified if their nonzero coordinates match for all axes in $\mathcal{E}$. Thus, two nonnegative orthants $\bar{O}_F$ and $\bar{O}_G$ are regarded as glued together along the orthant $\bar{O}_{F\cap G}$ when $F \cap G \neq \emptyset$. The orthant space $\mathcal{O}(\mathcal{E},\Omega)$ is the space constructed by gluing such nonnegative orthants along common faces. We also denote by $O_F \coloneqq \mathbb{R}^F_{>0}$ the positive orthant with axes $F$.

\begin{example}\label{ex:k-spider}
    The simplest orthant space is when $\Omega$ consists only of singleton sets and the empty set. For example, if we define $\mathcal{E} = \{1,2,3\}$ with the axes indexed by natural numbers and set $\Omega = \{\emptyset, \{1\}, \{2\}, \{3\}\}$, the corresponding orthant space $\mathcal{O}(\mathcal{E},\Omega)$ is the space shown in Figure \ref{fig:twodim_treespace} (left). Since it consists of three half-lines connected at the origin, this space is called a {\it $3$-spider}. More generally, a space with $k$ connected half-lines is called a {\it $k$-spider}. As discussed later, the $3$-spider space represents the space of rooted phylogenetic trees with three taxa.
\end{example}



For the orthant space $\mathcal{O}(\mathcal{E},\Omega)$, we define its dimension as $\max_{F \in \Omega} |F|$. The point at which all coordinates are zero is called the origin. The $k$-spider from Example \ref{ex:k-spider} is an example of a one-dimensional space. For an orthant space $\mathcal{O}(\mathcal{E}, \Omega)$ of dimension $p$, we also define $\Omega_p = \{F \in \Omega \mid |F| = p\}$ to be the set of all faces of maximum dimension.

An important example of orthant spaces in applications is the space of phylogenetic trees constructed by \cite{Billera2001-vl}.
\begin{example}[BHV Treespace]\label{ex:treespace}
A tree with $n+1$ labeled leaves is called an $n$-tree. An $n$-tree can be viewed as a phylogenetic tree with a single ``root'' representing a common ancestor and $n$ leaves representing extant species. Phylogenetic trees are characterized by their topology and branch lengths. In the construction of phylogenetic tree space by \cite{Billera2001-vl}, only internal branch lengths are considered. Here, a branch is internal if it does not connect directly to a leaf; otherwise it is external. If external branch lengths are also to be modeled, one may work in the product space of BHV tree space and $\mathbb{R}^n$.

An internal branch induces a cluster of leaves, corresponding to the taxon set beneath the subtree attached to that branch. The collection of all such clusters (i.e., the set of nontrivial taxon subsets) corresponds to the set of coordinate axes in the orthant space, denoted as $\mathcal{E}^{(n)}$. In what follows, we identify each internal branch with its induced cluster whenever no ambiguity arises. A set of internal branches (or the induced clusters) $F$ is said to be compatible if they can coexist within the same tree. In that case, we set $F \in \Omega^{(n)}$. It is straightforward to see that $\Omega^{(n)}$ forms a simplicial complex. The resulting orthant space, denoted as $\mathcal{T}_n = \mathcal{O}(\mathcal{E}^{(n)}, \Omega^{(n)})$, defines the phylogenetic tree space. 

Each orthant of $\mathcal{T}_n$ encodes a fixed tree topology, and the coordinates within that orthant record the internal branch lengths of that topology. Two orthants meet along a common face when the corresponding topologies become identical after contracting some branches to zero length. The origin, shared by all orthants, represents the fully unresolved star tree with no internal branches.

    For a binary $n$-tree, there are $n-2$ internal branches, which is the maximum number of coexisting branches in an $n$-tree. Hence, the dimension of this orthant space $\mathcal{T}_n$ is $n-2$. Moreover, since the number of distinct topologies of binary trees is $(2n-3)!!$ \citep{Felsenstein2004-yp}, there are $(2n-3)!!$ orthants of dimension $n-2$ glued together. In other words, $|\Omega^{(n)}_{n-2}| = (2n-3)!!$.

    When $n=3$, the maximum dimension is one, and there are three distinct topologies of binary trees. Thus, the phylogenetic tree space $\mathcal{T}_3$ coincides with the $3$-spider space from Example \ref{ex:k-spider}. When $n=4$, the space is two-dimensional, with 15 distinct topologies of binary trees. Hence, $\mathcal{T}_4 = \mathcal{O}(\mathcal{E}^{(4)}, \Omega^{(4)})$ consists of 15 two-dimensional orthants glued together. The structure of $\Omega^{(4)}$ is shown in Figure \ref{fig:twodim_treespace} (center). Although the space $\mathcal{T}_4$ is too complex to depict fully, the structure of $\Omega^{(4)}$ shows that locally it consists of three nonnegative orthants joined along a common axis (Figure \ref{fig:twodim_treespace}, right). 
\end{example}

\begin{figure}[t]
\centering
\begin{minipage}{0.3\linewidth}
\centering
        \includegraphics[scale=0.3]{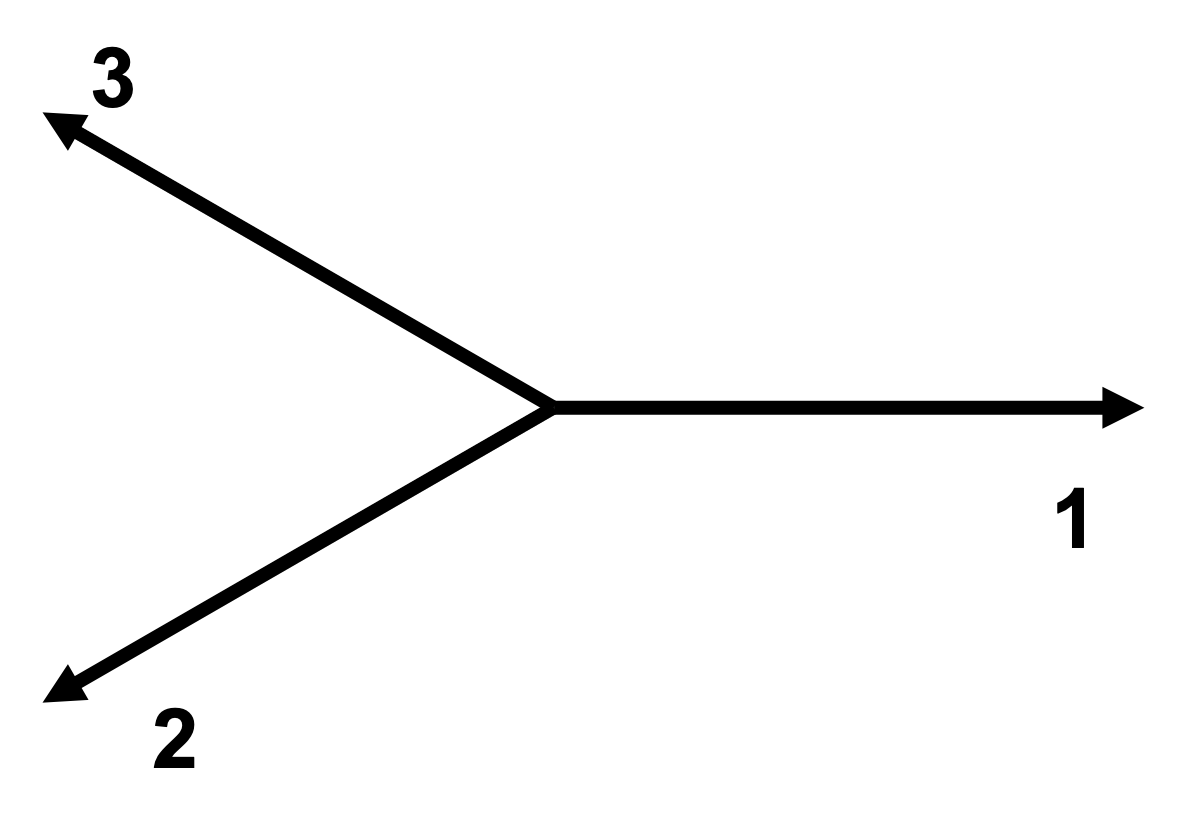}
\end{minipage}
\begin{minipage}{0.3\linewidth}
    \centering
    \includegraphics[scale=0.25]{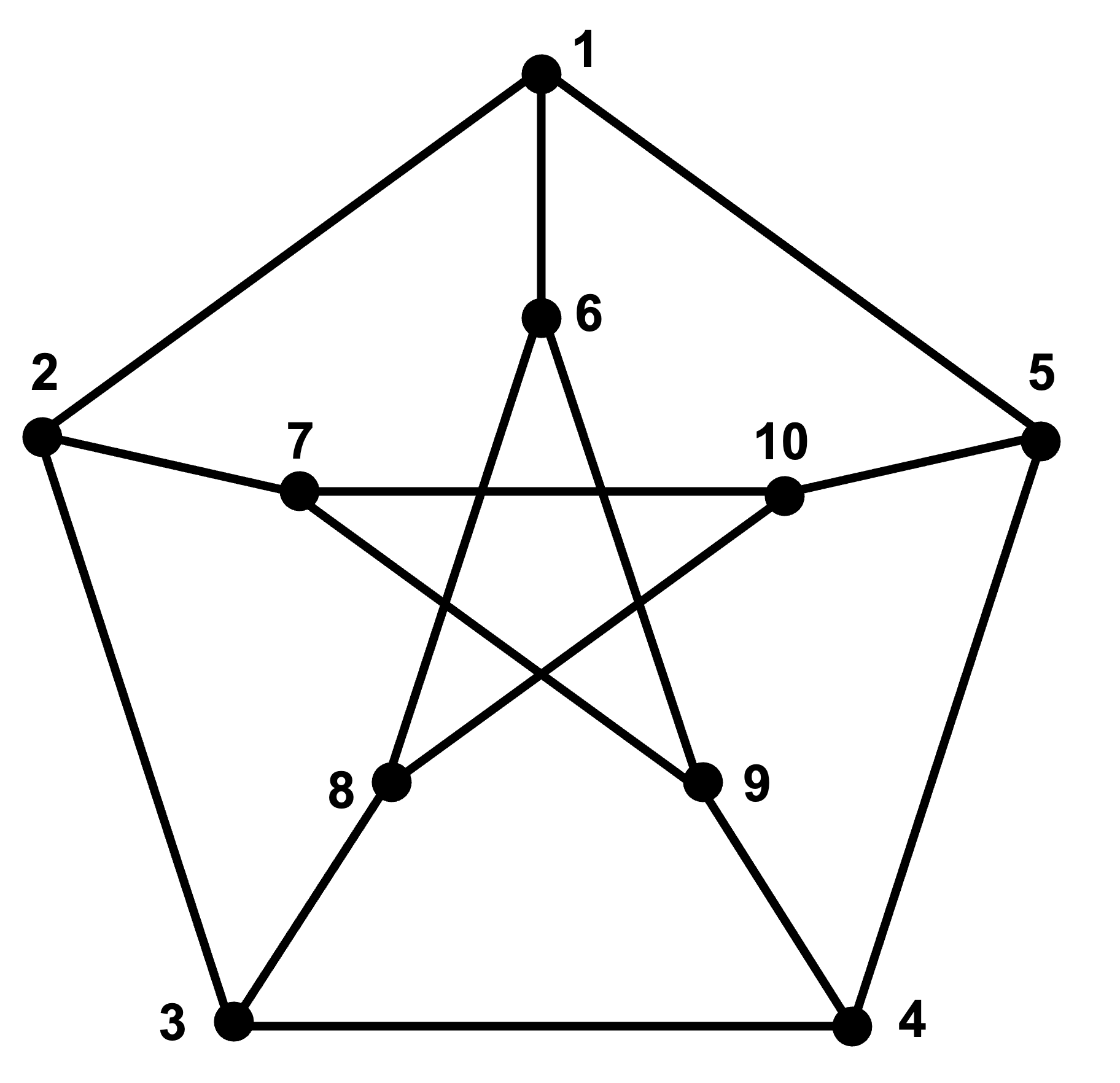}
\end{minipage}
\begin{minipage}{0.3\linewidth}
    \centering
    \includegraphics[scale=0.25]{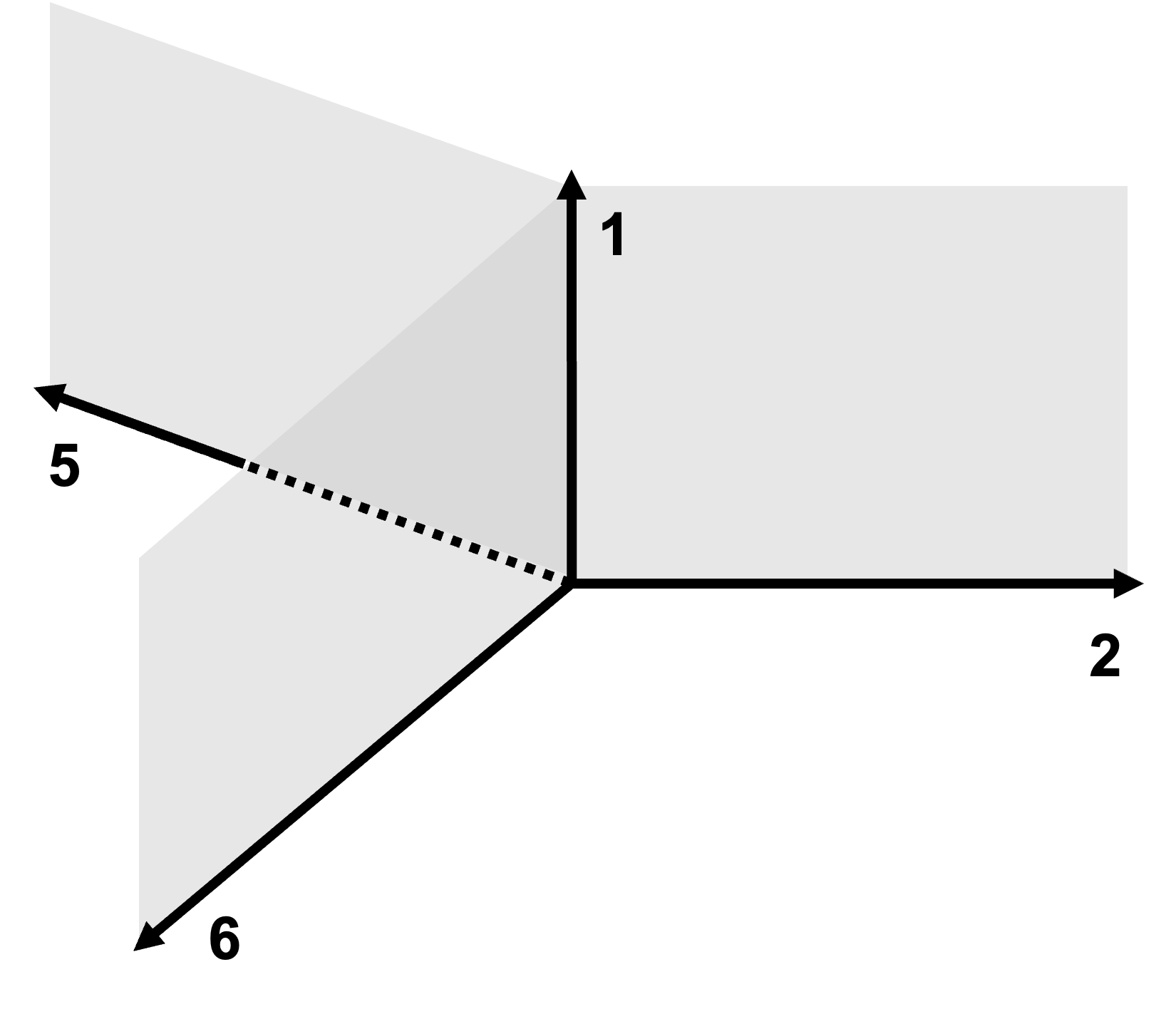}
\end{minipage}
    \caption{(Left): 3-spider. (Center): $\Omega^{(4)}$ corresponding to the phylogenetic tree space $\mathcal{T}_4= \mathcal{O}(\mathcal{E}^{(4)}, \Omega^{(4)})$. Each vertex of the labeled graph corresponds to a singleton set in $\Omega$, and the set of two vertices connected by a line segment corresponds to a two-element set in $\Omega$. (Right): A part of $\mathcal{T}_4$.}
    \label{fig:twodim_treespace}
\end{figure} 

    \subsubsection{CAT(0) space}
    Let $(\mathcal{X}, d)$ be an arbitrary metric space. If any two points in $\mathcal{X}$ can be connected by a path whose length equals the distance between them, such a path is called a geodesic, and $(\mathcal{X}, d)$ is said to be a geodesic metric space. More precisely, a geodesic connecting two points $x, y \in \mathcal{X}$ is a continuous map $\gamma_{x,y}: [0, 1] \to \mathcal{X}$ such that $\gamma_{x,y}(0) = x$, $\gamma_{x,y}(1) = y$, and for any $s, t \in [0, 1]$, we have $d(\gamma_{x,y}(s), \gamma_{x,y}(t)) = |s - t| d(x, y)$.

A CAT(0) space is a geodesic metric space with non-positive curvature. There are multiple equivalent characterizations of CAT(0) spaces; see, for example, \cite{Bacak2014-nn}. In particular, a complete CAT(0) space is called a Hadamard space.

Among the various properties of Hadamard spaces, one of the most fundamental for statistical inference is the uniqueness of geodesics. \cite{Owen2011-nq} developed a polynomial-time algorithm for finding geodesics in phylogenetic tree space. This algorithm was later extended to CAT(0) orthant spaces \citep{Miller2015-jm} and to general CAT(0) cubical complexes \citep{Hayashi2018}.

Furthermore, in Hadamard spaces, convexity can be defined using the unique geodesics. Specifically, a subset $S \subseteq \mathcal{H}$ of a Hadamard space $(\mathcal{H}, d)$ is defined to be convex if the geodesic connecting any two points of $S$ lies entirely within $S$. Additionally, the convexity of a function $f:\mathcal{H} \to [-\infty, \infty]$ is defined via the convexity of its epigraph $\{(x, \mu) \in \mathcal{H} \times \mathbb{R} \mid \mu \geq f(x)\}$. As in the case of Euclidean spaces, the function $f$ is concave if $-f$ is convex. Several facts about convex analysis on Hadamard spaces are summarized in \cite{Bacak2014-nn}. One important fact is the joint convexity of the distance function.

\begin{lemma}[\cite{Bacak2014-nn}]\label{lem:joint_convexity_of_geodesic}
    Let $(\mathcal{H}, d)$ be a Hadamard space, and let $\gamma, \gamma^\prime : [0,1] \to \mathcal{H}$ be two arbitrary geodesics. Then, the distance function satisfies the following joint convexity inequality:
    \begin{align}
        d(\gamma(t), \gamma^\prime(t)) \leq (1-t) d(\gamma(0), \gamma^\prime(0)) + t d(\gamma(1), \gamma^\prime(1)).
    \end{align}
\end{lemma}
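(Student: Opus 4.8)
The plan is to derive the inequality directly from the defining comparison property of CAT(0) spaces; alternatively one may simply invoke \cite{Bacak2014-nn}, but the geometric argument is short and self-contained. The crucial preliminary step is the special case in which the two geodesics share an endpoint. Suppose $\eta, \eta' : [0,1] \to \mathcal{H}$ are geodesics with $\eta(0) = \eta'(0) = p$. Form the geodesic triangle with vertices $p$, $\eta(1)$, $\eta'(1)$, whose two sides meeting at $p$ are the traces of $\eta$ and $\eta'$, and take a comparison triangle with vertices $\bar p, \bar a, \bar b$ in the Euclidean plane $\mathbb{R}^2$. Because geodesics are parametrized at constant speed, $d(p,\eta(t)) = t\, d(p,\eta(1))$ and $d(p,\eta'(t)) = t\, d(p,\eta'(1))$, so the comparison points of $\eta(t)$ and $\eta'(t)$ are $\bar p + t(\bar a - \bar p)$ and $\bar p + t(\bar b - \bar p)$. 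Their Euclidean distance equals $t\,\lVert \bar a - \bar b\rVert = t\, d(\eta(1),\eta'(1))$ by similar triangles, and the CAT(0) comparison inequality gives
\begin{align}
    d(\eta(t), \eta'(t)) \leq t\, d(\eta(1), \eta'(1)).
\end{align}
Reversing the parametrization ($s \mapsto 1-s$) yields the mirror statement for geodesics sharing their \emph{initial} point instead.

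For the general case, introduce the auxiliary geodesic $\sigma : [0,1] \to \mathcal{H}$ from $\gamma(0)$ to $\gamma'(1)$, which exists since $\mathcal{H}$ is a Hadamard (hence geodesic) space. The geodesics $\gamma$ and $\sigma$ share the point $\gamma(0)$, so the special case gives $d(\gamma(t), \sigma(t)) \leq t\, d(\gamma(1), \gamma'(1))$. The geodesics $\sigma$ and $\gamma'$ share the point $\gamma'(1)$; applying the special case to their reversals and substituting the parameter $1-t$ gives $d(\sigma(t), \gamma'(t)) \leq (1-t)\, d(\gamma(0), \gamma'(0))$. Combining these two bounds with the triangle inequality,
\begin{align}
    d(\gamma(t), \gamma'(t)) \leq d(\gamma(t), \sigma(t)) + d(\sigma(t), \gamma'(t)) \leq (1-t)\, d(\gamma(0), \gamma'(0)) + t\, d(\gamma(1), \gamma'(1)),
\end{align}
which is the asserted inequality.

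The only point requiring care, rather than genuine difficulty, is justifying that the comparison point of $\gamma(t)$ lies at parameter $t$ along the corresponding side of the Euclidean comparison triangle; this is exactly the fact that the comparison triangle preserves side lengths (and hence the lengths of subsegments), together with the constant-speed parametrization of geodesics. Everything else reduces to the triangle inequality and elementary plane geometry, so I do not anticipate a real obstacle in carrying this out.
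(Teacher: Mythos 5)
Your argument is correct. The paper does not prove this lemma at all—it simply cites \cite{Bacak2014-nn}—and your proof is essentially the standard textbook argument found in that reference (and in Bridson--Haefliger): first establish convexity for two geodesics issuing from a common point via the CAT(0) comparison-triangle inequality and similar triangles in $\mathbb{R}^2$, then handle the general case by inserting the auxiliary geodesic $\sigma$ from $\gamma(0)$ to $\gamma'(1)$ and combining the two special-case bounds with the triangle inequality; the justification that the comparison point of $\eta(t)$ sits at parameter $t$ on the comparison side is exactly the constant-speed parametrization plus equality of side lengths, as you say. One small wording slip: the mirror statement obtained by the reparametrization $s \mapsto 1-s$ is the case of geodesics sharing their \emph{terminal} point (you already proved the shared-initial-point case), but since you apply it to the reversals of $\sigma$ and $\gamma'$ with the substitution $1-t$, the actual computation is carried out correctly and the inequality $d(\sigma(t),\gamma'(t)) \leq (1-t)\,d(\gamma(0),\gamma'(0))$ follows as claimed.
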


In particular, if $\gamma^\prime(t) = y$ for all $t \in [0,1]$ and some $y \in \mathcal{H}$, the inequality reduces to convexity of the distance function with respect to its first (or second) argument.

An important consequence, from a statistical perspective, is the unique existence of the Fréchet mean.

\subsubsection{CAT(0) orthant space}

While orthant spaces are complete geodesic metric spaces, they are not necessarily CAT(0). However, the characterization of CAT(0) orthant spaces can be given in a simple form. Here, $\Omega$ is called a \textit{flag} complex if it satisfies the following condition: for any $F \in 2^\mathcal{E}$, if every pair of elements $e, f \in F$ satisfies ${e,f} \in \Omega$ (that is, every two-point subset of $F$ is an element of $\Omega$), then $F \in \Omega$.

\begin{theorem}[\cite{Gromov1987-vn, Miller2015-jm}]
    A necessary and sufficient condition for an orthant space $\mathcal{O}(\mathcal{E}, \Omega)$ to be CAT(0) is that $\Omega$ is a flag complex.
\end{theorem}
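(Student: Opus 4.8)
The plan is to reduce the statement to Gromov's link condition by realizing $\mathcal{O}(\mathcal{E},\Omega)$ as a Euclidean cone. First I would note that each orthant $\bar{O}_F = \mathbb{R}^F_{\geq 0}$ is the Euclidean cone over the all-right spherical simplex $\sigma_F = \{x \in \bar{O}_F : \lVert x \rVert = 1\}$ (a spherical simplex of dimension $|F|-1$ in which every edge has length $\pi/2$ and every dihedral angle is $\pi/2$), and that the gluing $\bar{O}_F \cap \bar{O}_G = \bar{O}_{F\cap G}$ is mirrored by $\sigma_F \cap \sigma_G = \sigma_{F\cap G}$. Hence $\mathcal{O}(\mathcal{E},\Omega)$, equipped with its intrinsic length metric, is isometric to the Euclidean cone $C_0(L)$ over the piecewise spherical simplicial complex $L \coloneqq \bigcup_{F \in \Omega} \sigma_F$, the \emph{link of the origin}; and as an \emph{abstract} simplicial complex, $L$ is nothing but $\Omega$ itself.

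With this identification I would chain two standard facts. (i) Berestovskii's theorem (Bridson--Haefliger, Theorem II.3.14): the Euclidean cone $C_0(L)$ is CAT(0) if and only if $L$ is a CAT(1) space. (ii) Gromov's Lemma for all-right spherical complexes (Bridson--Haefliger, II.5, after \cite{Gromov1987-vn}): a finite piecewise spherical complex all of whose cells are all-right spherical simplices is CAT(1) if and only if it is a flag complex. Feeding the identification $L \cong \Omega$ through (i) and (ii) yields the theorem. The ``only if'' direction is the easy one: if $\Omega$ is not flag, choose a minimal non-face $F$, so $\lvert F \rvert \ge 3$, $F \notin \Omega$, yet every proper subset of $F$ lies in $\Omega$; then $L$ contains the boundary of the all-right simplex $\sigma_F$ but not $\sigma_F$ itself, and this ``empty simplex'' carries a closed local geodesic of length $< 2\pi$ (for $\lvert F \rvert = 3$, a cycle of three $1$-cells of total length $3\pi/2$), so $L$ is not CAT(1) and $C_0(L)$ is not CAT(0). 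Equivalently, for $\lvert F \rvert = 3$ the subspace spanned by $\bar{O}_{\{e,f\}}, \bar{O}_{\{f,g\}}, \bar{O}_{\{e,g\}}$ is a Euclidean cone of total angle $3\pi/2 < 2\pi$, which already fails to be CAT(0).

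The real content, and the step I expect to be the main obstacle, is the ``if'' direction of (ii): that a finite flag all-right spherical complex is CAT(1). The usual route is an induction on dimension, exploiting two closure properties of this class: the link of any simplex in an all-right spherical complex is again an all-right spherical complex, and the flag condition is inherited by such links (if every pair from a vertex set spans an edge of $\mathrm{Lk}(\tau)$, then, together with $\tau$, the whole set has all pairs in $\Omega$, hence lies in $\Omega$ by flagness). By induction the link of every simplex is CAT(1), hence has no closed local geodesic shorter than $2\pi$; a local-to-global argument then transfers this to $L$ — the point being that a short closed local geodesic in an all-right complex would have to ``turn'' at each vertex through an angle realized in a link, forcing a configuration of pairwise-adjacent but non-spanning vertices, i.e.\ a missing face, contradicting flagness. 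Since this is precisely Gromov's Lemma, in the write-up I would give the reduction to the link in detail, cite Berestovskii's cone theorem and Gromov's Lemma with precise references, and prove the ``only if'' half explicitly, deferring the inductive proof of Gromov's Lemma to \cite{Gromov1987-vn, Miller2015-jm} and to Bridson--Haefliger.
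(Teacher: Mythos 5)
The paper itself gives no proof of this theorem --- it is stated as a quotation of Gromov (1987) and Miller et al.\ (2015) --- so there is no in-paper argument to compare against; your outline (realize $\mathcal{O}(\mathcal{E},\Omega)$ as the Euclidean cone over the all-right spherical complex $L\cong\Omega$, apply Berestovskii's cone theorem, then Gromov's lemma for all-right flag complexes) is precisely the standard route taken in the cited sources, and deferring the hard ``if'' half of Gromov's lemma to the literature is reasonable for a quoted result.

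One point to tighten in the ``only if'' half you do carry out: the claim that the empty-simplex boundary carries a closed local geodesic of length $<2\pi$ is immediate only when the minimal non-face $F$ has $|F|=3$ (the $3\pi/2$ circle, using that non-adjacent vertices of an all-right complex lie at distance $\ge\pi$). For a minimal non-face with $|F|=m\ge 4$, the boundary $\partial\sigma_F$ does not obviously contain such a short closed geodesic that is locally geodesic in the ambient $L$; the standard fix is to pass to the link of a simplex $\sigma_G$ with $G\subset F$, $|G|=m-3$: the three remaining vertices of $F$ are pairwise adjacent in $\mathrm{Lk}(\sigma_G)$ but span no $2$-simplex there (since $F\notin\Omega$), so that link contains the $3\pi/2$ circle, and since CAT(1)-ness passes to links of simplices, $L$ fails to be CAT(1). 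Likewise, your ``equivalently'' remark that the union of $\bar{O}_{\{e,f\}},\bar{O}_{\{f,g\}},\bar{O}_{\{e,g\}}$ is a cone of total angle $3\pi/2$ needs a word on why non-CAT(0)-ness of this subspace transfers to the whole space --- geodesics of the ambient space may leave these three quarter-planes --- and this, too, is cleanest when argued through the link of the origin rather than through the subspace itself.
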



\begin{example}
    The phylogenetic tree space given in Example \ref{ex:treespace} is CAT(0). This is because, when a set $S$ of internal branches of a phylogenetic tree is pairwise compatible, the entire set $S$ is also compatible \citep{Semple2003-ad}.
\end{example}

\subsection{Log-concave density on CAT(0) orthant space}

In this paper, we work with probability density functions, so a reference measure is required. Since CAT(0) orthant spaces are built from Euclidean nonnegative orthants glued along their faces, a natural reference measure can be defined from the usual Lebesgue measure.

Specifically, for a Borel set $S$ in the $p$-dimensional CAT(0) orthant space $\mathcal{O}(\mathcal{E}, \Omega)$, define
\begin{align}
    \nu(S) = \sum_{F \in \Omega_p} \lambda_p(S \cap \bar{O}_F),
\end{align}
where $\lambda_p$ is the Lebesgue measure on $\mathbb{R}^p$. This defines a valid measure. We use the same symbol $\nu$ to denote its completion, which serves as the reference measure. Such a measure has been adopted in several studies on phylogenetic tree spaces \citep{Willis2019-fx, Takazawa2024-ed}.

It is also straightforward to construct probability measures on
$\mathcal{O}(\mathcal{E},\Omega)$. For instance, a probability measure
$Q$ absolutely continuous with respect to $\nu$ can be obtained by
equipping each orthant $\bar{O}_F$ with a nonnegative integrable
function $f_F$ and setting
\[
   Z = \sum_{F \in \Omega_p} \int_{\bar{O}_F} f_F \, d\lambda_p, \qquad
   f(x) = \frac{1}{Z}\sum_{F \in \Omega_p} f_F(x)\,\mathbf{1}_{\{x \in \bar{O}_F\}}.
\]
Then $f$ is a density with respect to $\nu$, satisfying 
$\int dQ = \int f\, d\nu = 1$.
In later results (Theorems~\ref{th:existence} and~\ref{th:continuity}), we assume that the probability measure $Q$ satisfies 
$\int d(x,x_0)\, dQ < \infty$. 
This condition is analogous to the finite first-moment requirement in $\mathbb{R}^p$ and is readily ensured in the above construction, for example, if each $f_F(x)$ decays at least exponentially with the geodesic distance from some fixed point $x_0$, such as $f_F(x) \propto \exp[-d(x,x_0)]$.

A function $f: \mathcal{O}(\mathcal{E},\Omega) \to [-\infty, \infty]$ on a CAT(0) orthant space $\mathcal{O}(\mathcal{E},\Omega)$ is said to be log-concave if $\log f$ is a concave function. In particular, if a log-concave function $f$ is a probability density with respect to $\nu$, i.e. $\int_{\mathcal{O}(\mathcal{E},\Omega)} f , d\nu = 1$, then $f$ is called a log-concave density.

The class of log-concave densities in Euclidean space includes many practical distributions, such as multivariate normal distributions, gamma distributions with shape parameters of 1 or greater, and beta distributions where both shape parameters are 1 or greater. Although this class is non-parametric, \cite{Cule2010-xh} showed that, in a $p$-dimensional Euclidean space $\mathbb{R}^p$, the maximum likelihood estimator exists with probability 1 when the sample size $N$ is greater than or equal to $p+1$. Additionally, they showed that the likelihood maximization reduces to a convex optimization problem of dimension $N$. Similar properties have also been established for BHV tree space \cite{Takazawa2024-ed}, and the same arguments extend to CAT(0) orthant spaces.

In this study, we consider the following class of log-concave functions on the CAT(0) orthant space $\mathcal{O}(\mathcal{E}, \Omega)$. First, let $\Phi$ be the set of concave, upper-semicontinuous, coercive functions on $\mathcal{O}(\mathcal{E}, \Omega)$. Here, a function $\phi$ is said to be coercive if for some (and hence any) $x_0 \in \mathcal{O}(\mathcal{E}, \Omega)$, $\phi(x) \to -\infty$ as $d(x, x_0) \to \infty$. We define $\exp(\Phi) = \{ \exp(\phi) \mid \phi \in \Phi \}$ as the class of log-concave functions. These constraints are not restrictive in the context of the following discussions. Moreover, let $\Phi_0 \subseteq \Phi$ denote the subset of functions that are also log-densities, and we define $\exp(\Phi_0)$ as the space of log-concave densities.

Using the convexity of geodesics (Lemma \ref{lem:joint_convexity_of_geodesic}), we can construct a typical example of a log-concave function.

\begin{proposition}
Let $g: [0, \infty) \to [-\infty, \infty)$ be a nonincreasing concave function. Then, for any $x_0 \in \mathcal{O}(\mathcal{E}, \Omega)$, the function $\phi : \mathcal{O}(\mathcal{E}, \Omega) \to [-\infty, \infty)$ defined by
\[
\phi(x) = g(d(x, x_0)),
\]
is concave.
\end{proposition}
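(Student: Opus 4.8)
The plan is to reduce the claim to a one-dimensional statement about compositions of monotone and convex functions. On a Hadamard space, a function is concave exactly when its restriction to every geodesic is concave in the ordinary sense on $[0,1]$; this is immediate from the epigraph-based definition of convexity together with the fact that geodesics in the product Hadamard space $\mathcal{O}(\mathcal{E},\Omega)\times\mathbb{R}$ have the form $t\mapsto(\gamma(t),(1-t)\mu_0+t\mu_1)$ (see \cite{Bacak2014-nn}). So it suffices to fix an arbitrary geodesic $\gamma:[0,1]\to\mathcal{O}(\mathcal{E},\Omega)$ and show that $t\mapsto\phi(\gamma(t))=g(d(\gamma(t),x_0))$ is concave on $[0,1]$.

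First I would set $h(t)\coloneqq d(\gamma(t),x_0)$ and note that $h$ is convex. Indeed the constant path $\gamma'\equiv x_0$ is a geodesic, so Lemma~\ref{lem:joint_convexity_of_geodesic} — in the degenerate case noted right after its statement — gives, for $t=(1-s)t_0+s t_1$ with $s\in[0,1]$,
\[
  h(t)=d(\gamma(t),x_0)\le(1-s)\,d(\gamma(t_0),x_0)+s\,d(\gamma(t_1),x_0)=(1-s)h(t_0)+s h(t_1).
\]
(Equivalently, one may cite directly the convexity of $x\mapsto d(x,x_0)$ along geodesics in a CAT(0) space.) Thus $h$ is a nonnegative convex function on $[0,1]$, and in particular takes values in the domain $[0,\infty)$ of $g$.

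It then remains to verify the purely one-dimensional fact that $g\circ h$ is concave whenever $g:[0,\infty)\to[-\infty,\infty)$ is nonincreasing and concave and $h:[0,1]\to[0,\infty)$ is convex. This is the standard two-step chain: convexity of $h$ gives $h(t)\le(1-s)h(t_0)+s h(t_1)$; applying the nonincreasing function $g$ reverses this to $g(h(t))\ge g\big((1-s)h(t_0)+s h(t_1)\big)$; and concavity of $g$ gives $g\big((1-s)h(t_0)+s h(t_1)\big)\ge(1-s)g(h(t_0))+s g(h(t_1))$. Composing the two inequalities yields $g(h(t))\ge(1-s)g(h(t_0))+s g(h(t_1))$, i.e.\ $\phi\circ\gamma=g\circ h$ is concave. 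Since $\gamma$ was arbitrary, $\phi$ is concave.

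There is no serious obstacle here; the points needing a little care are (i) the passage from the epigraph definition of concavity to the geodesic-restriction criterion, and (ii) the bookkeeping when $g$ attains the value $-\infty$ on part of $[0,\infty)$ — but then every inequality above whose smaller side is $-\infty$ holds automatically, so the argument is unchanged. The only genuinely space-specific ingredient is the convexity of $t\mapsto d(\gamma(t),x_0)$, which is precisely where Lemma~\ref{lem:joint_convexity_of_geodesic} (hence the CAT(0) hypothesis) is used.
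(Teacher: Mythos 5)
Your proof is correct and follows essentially the same route as the paper: apply Lemma~\ref{lem:joint_convexity_of_geodesic} with the constant geodesic at $x_0$ to get convexity of $t\mapsto d(\gamma(t),x_0)$ along any geodesic, then compose with the nonincreasing concave function $g$ to reverse and then preserve the inequality. The paper's proof is just a terser version of your argument (it omits the explicit epigraph-to-geodesic reduction and the $-\infty$ bookkeeping, which you handle correctly).
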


\begin{proof}
By the joint convexity of the distance function, for any $x,y \in \mathcal{O}(\mathcal{E}, \Omega)$ and $\alpha \in (0,1)$, 
\begin{align}
    d(\gamma_{x,y}(\alpha), x_0) \leq (1-\alpha)d(x, x_0) + \alpha d(y, x_0).
\end{align}
The result follows.
\end{proof}

\subsection{Kernel density estimator}
In \cite{Weyenberg2014-wd, Weyenberg2017-kh}, the authors proposed the kernel density estimator on tree space, mainly for the purpose of outlier detection. In this section, we briefly review their contribution.

Given a sample $\{X_1, \ldots, X_N\}$ on tree space $\mathcal{T}_{p+2}$, the estimator takes the form
\begin{align}
   \hat{f}(X) \propto \frac{1}{N}\sum_{i=1}^N K_{X_i,h}(X),
\end{align}
where $K_{X_i,h}$ is a kernel function and $h$ is the bandwidth. In both their papers and the associated \texttt{R} implementation, the kernel was chosen to be of Gaussian type:
\begin{align}
   K_{X_i,h}(X) \propto \exp\!\left( -\left( \frac{d(X,X_i)}{h} \right)^2 \right).
\end{align}

In Euclidean space, the normalizing constant for this kernel function has a closed-form expression:
$c(X, h) = (2\pi h)^{-p/2}$. However, in tree space, no such simple expression is available. In fact, the normalizing constant depends on the location of $X$. This can be seen by comparing two extreme cases: when $X$ is at the origin and when $X$ lies deep within an orthant, far from any boundaries. Although \citet{Weyenberg2014-wd} did not address this issue and instead used a global constant independent of location, \citet{Weyenberg2017-kh} proposed using a lower bound for the normalizing constant, which can be computed using either classical quadrature methods or the holonomic gradient method.

As we show in Section \ref{sec:boundary_bias}, however, this estimator suffers from bias at the boundaries of the nonnegative orthants. In light of this, we propose a modified kernel designed to mitigate this issue.


\section{Theoretical Properties of Log-concave Projection}
In this section, we derive some theoretical properties of the log-concave maximum likelihood estimator. Following \cite{Dumbgen2011-mq} in the Euclidean setting, we work with the more general concept of log-concave projection.

\subsection{Log-concave projections}
\citet{Dumbgen2011-mq} considered the problem of approximating any probability measure on Euclidean space by a log-concave density. They studied the minimization of a Kullback–Leibler divergence–type quantity and called the minimizing log-concave density, if it exists, the log-concave approximation (also known as the log-concave projection). The exact characterization for the existence and other theoretical properties of log-concave projections was derived in the same paper, and some further explorations of the theoretical properties followed (see, for example, \cite{Samworth2018-gf} for a review). The first goal of the present paper is to extend these results, albeit partially, to the setting of non-positively curved orthant spaces. \citet{Takazawa2024-ed} provided a sufficient condition for the existence of the log-concave maximum likelihood estimator of an i.i.d. sample, which corresponds to the log-concave projection of the empirical measure, in BHV tree space. It is straightforward to verify that the existence results presented there also hold in CAT(0) orthant spaces.

As in \cite{Dumbgen2011-mq}, we consider the following functional $L$. For a concave function $\phi \in \Phi$ and a probability measure $Q$ on $\mathcal{O}(\mathcal{E}, \Omega)$, let
\begin{align}
L(\phi,Q) \coloneqq \int \phi \, dQ - \int e^{\phi} \, d\nu + 1.
\end{align}

Note that if $\int e^{\phi} \, d\nu = 1$ and $Q$ has a density $f_Q$ satisfying $\int f_Q \lvert \log f_Q \rvert \, d\nu < \infty$, then maximizing $L(\phi, Q)$ is equivalent to minimizing the Kullback–Leibler divergence from $Q$ to the density $\exp(\phi)$. However, here $L$ is defined for all concave functions $\phi \in \Phi$, not only for those corresponding to log-densities. 

Let $L(Q)$ denote the supremum of $L(\phi, Q)$ over all concave functions in $\Phi$. We call any $\psi \in \Phi$ attaining $L(\psi,Q)=L(Q)$, or the corresponding density $\exp(\psi)$, the \emph{log-concave projection} of $Q$.
The next lemma states that any maximizer of $L$ with respect to the first argument, if it exists, is always a log-density. Therefore, we may consider maximization of $L$ in the space $\Phi$, for finding log-concave projections.

\begin{lemma} \label{lem:maximizer-is-density}
    Fix probability measure $Q$ on $\mathcal{O}(\mathcal{E}, \Omega)$. Assume that $L(Q)\in\mathbb{R}$ and that there exists $\psi\in\Phi$ that achieves $L(Q)$:
    \begin{align}
    L(\psi,Q) = L(Q)\coloneqq\sup_{\phi\in\Phi}L(\phi,Q).
    \end{align}
    Then, $\psi$ is a log-density:
    \begin{align}\int e^{\psi} d\nu = 1.\end{align}
\end{lemma}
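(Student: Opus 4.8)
The plan is to exploit the fact that the class $\Phi$ is closed under adding real constants, and that shifting $\psi$ by a constant changes $L(\psi,Q)$ in an explicit, differentiable way; optimality at the unshifted maximizer then forces the normalizing integral to equal $1$.

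First I would record that $\psi + c \in \Phi$ for every $c \in \mathbb{R}$: concavity is invariant under vertical translation of the epigraph, upper-semicontinuity is preserved by adding a constant, and $\psi(x) + c \to -\infty$ whenever $\psi(x) \to -\infty$, so coercivity is preserved as well. Next, since $L(Q) \in \mathbb{R}$ and $L(\psi,Q) = L(Q)$, and since $\psi$ is bounded above (being concave, coercive and upper-semicontinuous) so that $\int \psi\, dQ$ is well defined in $[-\infty,\infty)$, the identity $L(\psi,Q) = \int \psi\, dQ - \int e^{\psi}\, d\nu + 1$ forces both $\int \psi\, dQ \in \mathbb{R}$ and $I \coloneqq \int e^{\psi}\, d\nu \in [0,\infty)$; moreover $\psi$ is not identically $-\infty$ (otherwise $L(\psi,Q) = -\infty$), so in fact $I > 0$.

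Then I would define, for $c \in \mathbb{R}$,
\[
g(c) \coloneqq L(\psi + c, Q) = \int \psi\, dQ + 1 + c - e^{c} I,
\]
using that $Q$ is a probability measure. Because $\psi + c \in \Phi$ for every $c$ and $\psi$ attains the supremum $L(Q)$ over $\Phi$, we have $g(c) \le g(0) = L(Q)$ for all $c$, so $g$ attains its global maximum at $c = 0$. Since $g$ is differentiable on $\mathbb{R}$ with $g'(c) = 1 - e^{c} I$, the stationarity condition $g'(0) = 0$ gives $I = 1$, which is the assertion. (Equivalently, $g$ is strictly concave with unique maximizer $c^{\ast} = -\log I$, so $c^{\ast} = 0$ again yields $I = 1$.)

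There is no substantial obstacle here; the only points needing a line of care are the stability of the three defining properties of $\Phi$ under adding a constant, and the finiteness of $\int \psi\, dQ$ and $\int e^{\psi}\, d\nu$, both of which follow immediately from $L(Q) \in \mathbb{R}$ together with $\psi$ being bounded above.
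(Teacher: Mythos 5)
Your proof is correct and follows essentially the same route as the paper: both exploit that $\Phi$ is closed under adding constants and that optimality of $\psi$ within the one-parameter family $\psi+c$ forces $\int e^{\psi}\,d\nu=1$ (the paper substitutes the specific shift $c=-\log\int e^{\psi}d\nu$ and uses $c-1-\log c\ge 0$, while you use the first-order condition $g'(0)=0$). One small remark: your justification that $\psi\not\equiv-\infty$ implies $I>0$ is not quite right, since $\{\psi>-\infty\}$ could be $\nu$-null (e.g.\ when $Q$ is an empirical measure and $\psi$ lives on a lower-dimensional set), but this is harmless because $I>0$ is not actually needed — if $I=0$ then $g'(0)=1\neq 0$ contradicts maximality at $c=0$ (equivalently, $L(Q)\in\mathbb{R}$ rules it out, as in the paper's proof).
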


\begin{proof}
    For any $\phi\in\Phi$, we have $\int e^\phi d\nu<\infty$ because of coercivity. Also, we only need to consider the case $\int e^\phi d\nu > 0$, since if it equals zero, then $L(Q)$ cannot be finite. To see this, assume otherwise. Then, for any $M>0$, $\phi + M \in \Phi$ and $L(\phi + M, Q) = L(\phi, Q) + M$. This would imply that $L(Q) = \infty$, a contradiction. Now, let $0<c = \int e^\phi d\nu$. Then, $\phi-\log c\in\Phi$, $\int e^{\phi-\log c}d\nu=1$, and
    \begin{align}L(\phi-\log c, Q) \nonumber
    &= \int (\phi-\log c)dQ - \int e^{\phi-\log c}d\nu + 1 \nonumber \\
    &= \int \phi dQ - \log c \nonumber\\
    &= L(\phi,Q) + c - 1 - \log c \nonumber\\
    &\geq  L(\phi,Q).
    \end{align}
    The equality holds if and only if $c=1$.
\end{proof}

Throughout this section, we consider a probability measure $Q$ that has a density $q$.

\subsection{The uniqueness and existence of log-concave projections}
    We first show some properties of the log-concave projection. For any probability measure $Q$, let $\exp(\psi_Q)$ denote its log-concave projection. Also, the convex support of a probability measure $Q$, denoted by $\csupp(Q)$, is defined as follows:
    \begin{align}
        \csupp(Q) = \cap \{C\subseteq \mathcal{O}(\mathcal{E},\Omega)\mid C \text{ is closed and convex, and }Q(C)=1 \}.
    \end{align}
    $\csupp(Q)$ is the smallest closed convex set with $Q$-measure 1.

    We begin with the following Lemma.
    \begin{lemma}\label{lem:Q_h}
    Suppose that $Q$ has a density. Let 
    \begin{align}
        h(Q,x) = \sup\{ Q(C) \mid C : \text{closed and convex }, x\not \in \inter(C) \}. \label{eq:def_of_h}
    \end{align}
     Then, if $x\in \inter({\csupp(Q)})\cap \cup_{F\in\Omega_p}O_F$, we have $h(Q,x) < 1$.
    \end{lemma}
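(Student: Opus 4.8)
The claim is that if $x$ lies in the interior of the convex support of $Q$ and is in the relative interior of a maximal orthant (i.e.\ $x \in O_F$ for some $F \in \Omega_p$), then no closed convex set $C$ avoiding $x$ in its interior can carry the full mass of $Q$. My plan is to argue by contradiction: suppose $h(Q,x) = 1$, so that there is a sequence of closed convex sets $C_n$ with $x \notin \inter(C_n)$ and $Q(C_n) \to 1$. The guiding intuition is the Euclidean one — since $x$ is in the interior of $\csupp(Q)$, every closed convex set missing $x$ from its interior must itself miss a set of positive $Q$-measure — and the work is to transport a separating-hyperplane argument into the orthant space near $x$.

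The first step is to localize. Because $x \in O_F$ with $|F| = p$, a neighborhood of $x$ in $\mathcal{O}(\mathcal{E},\Omega)$ is isometric to an open subset of $\mathbb{R}^p$; fix $r>0$ small enough that the closed geodesic ball $\bar B(x,r)$ is contained in $O_F$ and hence carries a flat Euclidean structure, and small enough that $\bar B(x,r) \subseteq \inter(\csupp(Q))$ (possible since $x$ is in that open set). A closed convex set $C$ in a Hadamard space is geodesically convex, so $C \cap \bar B(x,r)$ is a convex subset of this Euclidean ball; since $x \notin \inter(C)$, in the Euclidean chart there is a supporting direction, i.e.\ a unit vector $u$ and (after shrinking $r$) a halfspace-type region $H_u = \{y \in \bar B(x,r) : \langle y - x, u\rangle \le 0\}$ with $\inter(H_u) \cap C = \emptyset$ — this is the standard supporting-hyperplane fact for a point not interior to a convex set, applied inside the flat chart. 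The second step is to produce a uniform mass deficit: the family of "caps" $\{C_u : u \in S^{p-1}\}$, where $C_u = \{y \in \bar B(x,r/2) : \langle y - x, u\rangle \ge \epsilon\}$ for a fixed small $\epsilon>0$, consists of sets each of which (a) is contained in $\inter(\csupp(Q))$ and is a nonempty open-interior convex set, hence has positive $Q$-measure since $Q$ has a density and $x\in\inter(\csupp Q)$ forces $Q$ to put mass in every neighborhood of $x$ in all directions — more precisely, $Q(C_u)>0$ because otherwise $\csupp(Q)$ could be cut by the complementary halfspace, contradicting $x$ being interior to it; and (b) by compactness of $S^{p-1}$ and, say, lower semicontinuity of $u \mapsto Q(C_u)$ (or a finite subcover argument with slightly enlarged caps), $\inf_{u} Q(C_u) =: \delta > 0$.

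Combining the two steps finishes the proof: for each $n$, pick the supporting direction $u_n$ for $C_n$ at $x$; then $C_{u_n} \subseteq \inter(H_{u_n})^c$-complement region is disjoint from $C_n$ (choosing $\epsilon$ compatibly with the radius $r$), so $Q(C_n) \le 1 - Q(C_{u_n}) \le 1 - \delta < 1$, contradicting $Q(C_n) \to 1$. Hence $h(Q,x) \le 1 - \delta < 1$.

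The main obstacle is Step two, the uniform lower bound $\delta>0$ on the mass of the caps. Establishing $Q(C_u)>0$ for each fixed $u$ is the heart of the matter and is exactly where the hypotheses $x\in\inter(\csupp Q)$ and "$Q$ has a density" are used: one argues that if some cap had zero mass, the halfspace $\{y : \langle y-x,u\rangle \le \epsilon\}$ intersected with a large ball — a closed convex set of full measure — would be a closed convex set of $Q$-measure one not containing $x$ in its interior, contradicting minimality of $\csupp(Q)$ (note such a set is genuinely convex in $\mathcal{O}(\mathcal{E},\Omega)$ since it lies in the flat chart near $x$, modulo checking that convex subsets of a flat neighborhood remain convex in the ambient space, which holds because geodesics through $\bar B(x,r)\subseteq O_F$ stay Euclidean). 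Passing from pointwise positivity to a uniform bound then requires a compactness/semicontinuity argument on $S^{p-1}$; I expect a careful $\epsilon$–$r$ bookkeeping there, but no deep difficulty once the pointwise claim is in hand.
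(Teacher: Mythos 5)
Your proposal breaks at Step two: the claim that every local cap $C_u\subseteq\bar B(x,r/2)$ has positive $Q$-measure is false, because $x\in\inter(\csupp(Q))$ does not imply that $Q$ puts any mass near $x$. The convex support is the smallest closed convex set of full measure, not the support itself: take the orthant space to be a single quadrant $\mathbb{R}^2_{\geq0}$ and let $Q$ be uniform on two disjoint unit balls centered at $(10,10)$ and $(30,10)$. Then $x=(20,10)$ lies in $\inter(\csupp(Q))\cap O_F$, yet every subset of $\bar B(x,1/2)$ is $Q$-null, so $\delta=\inf_u Q(C_u)=0$ and the intended contradiction $Q(C_n)\le 1-\delta<1$ never materializes. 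Your justification of pointwise positivity (``otherwise the halfspace $\{\langle y-x,u\rangle\le\epsilon\}$ intersected with a large ball would be a closed convex set of full measure'') fails for the same reason: the complement of a small cap is not convex, and knowing only that the cap near $x$ is null says nothing about the mass carried by the rest of that halfspace far from $x$, so no full-measure convex set contradicting the minimality of $\csupp(Q)$ is produced. (There is also an orientation slip---your caps $\{\langle y-x,u\rangle\ge\epsilon\}$ lie on the same side as $C_n$, hence are not disjoint from it---but that is cosmetic compared with the positivity failure.)

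The deeper point is that the mass a convex set $C_n$ with $x\notin\inter(C_n)$ is forced to miss need not be located anywhere near $x$, so no purely local argument at $x$ can succeed. In $\mathbb{R}^p$ one can repair your scheme by replacing caps with \emph{global} halfspaces through $x$: if the open halfspace $\{\langle y-x,u\rangle<0\}$ were null, the closed complementary halfspace would be a full-measure closed convex set forcing $\csupp(Q)$ to one side of $x$, contradicting $x\in\inter(\csupp(Q))$; upper semicontinuity in $u$ and compactness of the sphere then give a uniform bound. But in a CAT(0) orthant space there is no global linear structure: the supporting hyperplane controls $C_n$ only inside the chart of $O_F$, while the complementary mass may sit in other orthants. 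This is precisely what the paper's proof is engineered around: it sets $T_N=S_N\cap\csupp(Q)$ with $Q(T_N)\to1$, encodes these sets as concave indicator functions, and uses the hypograph-convergence machinery (Lemmas \ref{lem:proj-set} and \ref{lem:limit_conv}) to extract a limiting closed convex set of full $Q$-measure inside $\csupp(Q)$; minimality forces its interior to coincide with $\inter(\csupp(Q))$, while the local convergence in the top-dimensional orthant shows $x$ cannot be interior to it, a contradiction. The halfspace-positivity ingredient in the paper is applied not at your arbitrary $x$ but at a point $y$ interior to the convex hull of $p+1$ points of $\supp(Q)\cap O_F$, where it genuinely holds.
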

    The proof is relegated to Appendix \ref{sec:proof-logcon}.
    
    The uniqueness of the log-concave projection holds $\nu$-almost everywhere, whenever it exists.

    \begin{theorem}\label{th:uniqueness}
        Log-concave projection $\exp(\psi_{Q})$ of any probability measure $Q$ that has a density on a CAT(0) orthant space is unique $\nu$-almost everywhere. Furthermore, it is strictly unique on $\inter(\csupp(Q))\cap \cup_{F\in\Omega_p} O_F$.
    \end{theorem}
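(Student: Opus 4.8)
The plan is to split the statement into the $\nu$-almost-everywhere uniqueness, which is a strict-concavity argument, and the pointwise (``strict'') uniqueness on $S\coloneqq\inter(\csupp Q)\cap\bigcup_{F\in\Omega_p}O_F$, which upgrades the a.e.\ statement using continuity of concave functions in a local Euclidean chart. Throughout I work under the standing assumption that $Q$ has finite first moment, so that $L(Q)\in\mathbb{R}$ and the projection is well-posed; in particular, by Lemma~\ref{lem:maximizer-is-density} any maximizer $\psi\in\Phi$ of $L(\cdot,Q)$ satisfies $\int e^{\psi}\,d\nu=1$.

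For the a.e.\ part, suppose $\psi_0,\psi_1\in\Phi$ both attain $L(Q)$. The midpoint $\bar\psi\coloneqq\tfrac12(\psi_0+\psi_1)$ is again concave, upper-semicontinuous, and coercive, hence $\bar\psi\in\Phi$. The map $\phi\mapsto\int\phi\,dQ$ is affine, while $\phi\mapsto\int e^{\phi}\,d\nu$ is convex: pointwise, $e^{(\psi_0(x)+\psi_1(x))/2}\le\tfrac12\bigl(e^{\psi_0(x)}+e^{\psi_1(x)}\bigr)$, with equality if and only if $\psi_0(x)=\psi_1(x)$ (the inequality and its equality case remain valid when one of the values is $-\infty$). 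Hence $L(\bar\psi,Q)\ge\tfrac12 L(\psi_0,Q)+\tfrac12 L(\psi_1,Q)=L(Q)$, and since also $L(\bar\psi,Q)\le L(Q)$, all inequalities are equalities. Because the integrals $\int e^{\psi_i}\,d\nu=1$ are finite, the pointwise inequality together with equality of the integrals of $e^{\bar\psi}$ and $\tfrac12(e^{\psi_0}+e^{\psi_1})$ forces $\psi_0=\psi_1$ $\nu$-a.e.; this gives the $\nu$-a.e.\ uniqueness of $\psi_Q$ and of $\exp(\psi_Q)$.

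For the strict part, fix $x\in S$. Since $x$ lies in an open maximal orthant $O_F$ and in $\inter(\csupp Q)$, there is $\varepsilon>0$ with $B(x,\varepsilon)\subseteq O_F\cap\csupp Q$ such that geodesics of $\mathcal{O}(\mathcal{E},\Omega)$ between points of $B(x,\varepsilon)$ are Euclidean segments; thus $B(x,\varepsilon)$ is isometric to an open Euclidean ball, geodesic convexity inside it is ordinary convexity, and $\nu$ restricted to it equals $\lambda_p$. For either maximizer $\psi_i$, the hypothesis $L(Q)\in\mathbb{R}$ gives $\int\psi_i\,dQ>-\infty$, so $Q(\{\psi_i=-\infty\})=0$; hence the closed convex set $\overline{\dom\psi_i}$ has $Q$-measure $1$ and therefore contains $\csupp Q$, so $B(x,\varepsilon)\subseteq\overline{\dom\psi_i}$. (Equivalently one may invoke Lemma~\ref{lem:Q_h}: $h(Q,x)<1$ excludes $\psi_i(x)=-\infty$, using the witness $C=\overline{\dom\psi_i}$.) Consequently $\dom\psi_i\cap B(x,\varepsilon)$ is a convex set dense in $B(x,\varepsilon)$, hence full-dimensional with interior $B(x,\varepsilon)$, so $x\in\inter(\dom\psi_i)$ and $\psi_i$ is a finite concave function on the Euclidean ball $B(x,\varepsilon)$, therefore continuous there. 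Two continuous functions that agree $\lambda_p$-a.e.\ on an open Euclidean set agree everywhere on it, so $\psi_0\equiv\psi_1$ on $B(x,\varepsilon)$; in particular $\psi_0(x)=\psi_1(x)$. As $x\in S$ was arbitrary, $\psi_0=\psi_1$ on $S$.

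The main obstacle is the second stage: one must ensure that a point in the interior of the convex support lies in the interior of the effective domain of \emph{every} maximizer, so that the concave functions compared there are genuinely finite and continuous rather than $-\infty$ or singular. This is exactly why the statement is restricted to $\bigcup_{F}O_F$---in a local Euclidean chart everything reduces to standard convex analysis, whereas near the singular strata where several maximal orthants meet a finite concave function need not extend to a Euclidean neighbourhood, and the a.e.-to-everywhere upgrade can genuinely fail. A minor point is the degenerate regime $L(Q)=-\infty$ (e.g.\ $Q$ without finite first moment), in which every $\phi\in\Phi$ trivially attains $L(Q)$ and uniqueness is false; this is excluded by the standing moment assumption.
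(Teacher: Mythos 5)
Your proof is correct, and its first half (a.e.\ uniqueness via the midpoint $\bar\psi=\tfrac12(\psi_0+\psi_1)$ and the strict convexity of $\exp$, using Lemma~\ref{lem:maximizer-is-density}) is the same argument as the paper's, up to the cosmetic difference that the paper renormalizes the midpoint to a density before comparing with $L(Q)$. Where you genuinely diverge is the ``strict'' part. The paper shows $\inter(\csupp(Q))\cap\bigcup_{F\in\Omega_p}O_F\subseteq\dom\psi_Q$ quantitatively, via Lemma~\ref{lem:Q_h} and the bound $\psi_Q(x)\ge\bigl(L(Q)-Mh(Q,x)\bigr)/\bigl(1-h(Q,x)\bigr)$, and then invokes continuity of $\psi_Q$ on $\inter(\dom\psi_Q)\cap\bigcup_F O_F$. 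You instead observe that $\int\psi_i\,dQ=L(Q)>-\infty$ forces $Q(\dom\psi_i)=1$, hence $\csupp(Q)\subseteq\overline{\dom\psi_i}$ by minimality of the convex support, and then a local convex-geometry argument in a Euclidean chart (the relative interior of a convex set equals that of its closure) places a whole ball $B(x,\varepsilon)\subseteq O_F$ inside $\dom\psi_i$, after which continuity of finite concave functions on open Euclidean sets finishes exactly as in the paper. Your route is more elementary for this particular theorem---it bypasses Lemma~\ref{lem:Q_h} entirely (your parenthetical alternative is only an aside)---whereas the paper's $h(Q,x)$ bound is the form that gets reused later, where one needs lower bounds uniform along a sequence of near-maximizers (Theorems~\ref{th:existence} and~\ref{th:continuity}), for which your ``finite integral'' trick would not suffice. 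Your explicit standing assumption that $Q$ has a finite first moment (so $L(Q)\in\mathbb{R}$) is a sensible sharpening: the statement leaves it implicit, but both proofs need it, and without it the projection is degenerate exactly as you note. Two facts you assert without proof are indeed standard in this setting but deserve a sentence each: that geodesics between points of a small ball inside an open maximal orthant are Euclidean segments (local geodesics are global geodesics in CAT(0) spaces, and balls are convex), and that $\bar\psi\in\Phi$ (upper semicontinuity and coercivity of the average).
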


    \begin{proof}
        Suppose that two concave functions $\psi_1, \psi_2\in\Phi$ both maximize $L(\cdot,Q)$. Then, by Lemma \ref{lem:maximizer-is-density}, $\int\exp(\psi_1)d\nu = \int \exp(\psi_2)d\nu = 1$. Hence, if we let $\psi = \psi_1/2 + \psi_2/2 - \log \int \exp((\psi_1 + \psi_2)/2)d\nu$,
        \begin{align}
            L(\psi, Q) &= \frac{1}{2}(L(Q) + L(Q)) - \log\int \exp\left(\frac{\psi_1 + \psi_2}{2}\right)d\nu \nonumber \\
            &\geq L(Q) - \log\int\frac{\exp(\psi_1) + \exp(\psi_2)}{2}d\nu \nonumber \\
            &= L(Q) 
        \end{align}
        Equality holds if and only if $\exp(\psi_1) = \exp(\psi_2)$ $\nu$-almost everywhere.

        Furthermore, for any log-concave projection $\psi_Q$, we have 
        \[\dom(\psi_Q) \supseteq \inter(\csupp(Q)) \cap \cup_{F\in\Omega_p} O_F.\]
        To see this, let $M = \sup_x \psi_Q(x)$.
        For an interior point $x\in\inter(\csupp(Q))\cap \cup_F O_F$, if $\psi_Q(x)< M$, 
        \begin{align}
            L(\psi_Q, Q) &= \int \psi_Q dQ \nonumber \\
            &= \int_{y: \psi_Q(y) \leq \psi_Q(x)} \psi_Q(y) dQ(y) + \int_{y: \psi_Q(y) > \psi_Q(x)} \psi_Q(y) dQ(y)  \nonumber\\
            &\leq \psi_Q(x) + (M - \psi_Q(x))h(Q,x)
        \end{align}
        where $h$ is the function defined in Lemma \ref{lem:Q_h}. Note that this relationship holds even when $\psi_Q(x) = M$. Then, 
        \begin{align}
            \psi_Q(x) \geq \frac{L(Q) - Mh(Q,x)}{1-h(Q,x)}
        \end{align}
        This shows that $\psi_Q(x) > -\infty$; thus $x\in\dom\psi_Q$. 

        This in particular implies that $\inter(\dom\psi_Q) \supseteq \inter(\csupp(Q))\cap \cup_{F\in\Omega_p} O_F$. Moreover, the continuity of $\psi_Q$ on $\inter(\dom\psi_Q) \cap \cup_F O_F$ and its $\nu$-a.e. uniqueness imply that the values at $\inter(\csupp(Q)) \cap \cup_F O_F$ are uniquely determined.
    \end{proof}

    \begin{remark}
        A geodesic metric space is said to have the \textit{geodesic extension property} if any geodesic between two points can be extended to form a geodesic line. If $\mathcal{O}(\mathcal{E},\Omega)$ has the geodesic extension property, the uniqueness holds on $\inter(\csupp(Q))$. This follows because, by Lemma 9 of \cite{Takazawa2024-ed}, $\psi_Q$ is continuous on $\inter(\dom\psi_Q)$, which implies that $\psi_Q(x)$ is uniquely determined.
    \end{remark}

    On the other hand, it is straightforward to see that for any log-concave function $\phi\in\Phi$, its restriction to $\csupp(Q)$, denoted $\phi_{\mid\csupp(Q)}$, has a value of $L(\cdot, Q)$ that is greater than or equal to that of $\phi$. Thus, if a log-concave projection exists, we can choose one whose domain is contained in $\csupp(Q)$. Furthermore, because the boundary of a convex set has $\nu$-measure zero, the value of $L(Q, \psi)$ depends on $\psi$ only through its values on $\inter(\csupp(Q)) \cap \cup_F O_F$, when $Q$ has a density and the domain of $\psi$ is contained in $\csupp(Q)$. From here on, we refer to the smallest version of these as the log-concave projection, obtained by taking the pointwise minimum of all log-concave projections.
    
    The final part of this subsection provides a sufficient condition for the existence of log-concave projections on a CAT(0) orthant space.

\begin{theorem}[Existence of log-concave projections] \label{th:existence}
        Assume that the probability measure $Q$ on the CAT(0) orthant space $\mathcal{O}(\mathcal{E}, \Omega)$ satisfies the following conditions:
        \begin{itemize}
            \item $Q$ has a density with respect to $\nu$ and
            \item There exists $x_0\in \mathcal{O}(\mathcal{E},\Omega)$ such that $\int d(x,x_0)dQ < \infty$.
        \end{itemize}
        Then $L(Q)\in\mathbb{R}$. When this holds, there exists 
        \begin{align}\psi \in\argmax_{\phi\in\Phi} L(\phi,Q)\end{align}
        and it is unique $\nu$-almost everywhere. Furthermore, the smallest version of the log-concave projection $\psi$ satisfies the following:
    \begin{align}&\int e^\psi d\nu=1, ~~~\mathrm{int}(\csupp(Q)) \cap \cup_F O_F \subseteq \dom(\psi)\subseteq \csupp(Q).\end{align}
\end{theorem}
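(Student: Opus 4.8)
Only two assertions require fresh work: (i) $L(Q)\in\mathbb{R}$, and (ii) the existence of a maximizer. Granting these, everything else is already in place. The $\nu$-a.e.\ uniqueness is Theorem~\ref{th:uniqueness}; the identity $\int e^{\psi}\,d\nu=1$ for any maximizer is Lemma~\ref{lem:maximizer-is-density}; the inclusion $\inter(\csupp(Q))\cap\cup_{F\in\Omega_p}O_F\subseteq\dom(\psi)$ is exactly the estimate $\psi_Q(x)\ge(L(Q)-Mh(Q,x))/(1-h(Q,x))>-\infty$ derived in the proof of Theorem~\ref{th:uniqueness} (valid because $Q$ has a density, so $\csupp(Q)$ is full-dimensional and Lemma~\ref{lem:Q_h} applies); and $\dom(\psi)\subseteq\csupp(Q)$, together with the existence of a pointwise-smallest maximizer, comes from restricting competitors to $\csupp(Q)$ and taking a pointwise minimum, as discussed just before the theorem. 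So the plan reduces to proving (i) and (ii).

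\textbf{Step 1: $L(Q)\in\mathbb{R}$.} For the lower bound, set $Z=\int e^{-d(\cdot,x_0)}\,d\nu$; this is finite because $\mathcal{O}(\mathcal{E},\Omega)$ is a finite union of $p$-dimensional orthants on each of which $d(x,x_0)$ grows at least linearly in the Euclidean coordinates, so $e^{-d(\cdot,x_0)}$ is $\lambda_p$-integrable. Then $\phi_0(x)=-d(x,x_0)-\log Z$ is concave (convexity of the distance function, Lemma~\ref{lem:joint_convexity_of_geodesic}), u.s.c.\ and coercive, hence $\phi_0\in\Phi_0$, and $L(Q)\ge L(\phi_0,Q)=-\int d(x,x_0)\,dQ-\log Z>-\infty$ by the first-moment hypothesis. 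For the upper bound, by the normalization step in the proof of Lemma~\ref{lem:maximizer-is-density} it suffices to bound $\int\phi\,dQ$ uniformly over $\phi\in\Phi$ with $\int e^{\phi}\,d\nu=1$. Following \cite{Dumbgen2011-mq}: for such $\phi$, concavity of $t\mapsto\phi(\gamma(t))$ along the geodesic rays $\gamma$ issuing from $x_0$, combined with the constraint $\int e^{\phi}\,d\nu=1$ and the fact that a metric ball of radius $r$ in $\mathcal{O}(\mathcal{E},\Omega)$ has $\nu$-volume $O(r^{p})$ (just as in $\mathbb{R}^p$), yields a tail bound of the shape $\phi(x)\le a-b\,d(x,x_0)$ for $d(x,x_0)\ge R$ with $a,b,R$ depending only on the space; feeding this into $\int\phi\,dQ$ and invoking $\int d(x,x_0)\,dQ<\infty$ produces the required uniform upper bound. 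Transcribing the Euclidean volume estimates of \cite{Dumbgen2011-mq} to the glued-orthant geometry is the main obstacle of the whole argument.

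\textbf{Step 2: existence of a maximizer.} Choose $\phi_n\in\Phi$ with $L(\phi_n,Q)\to L(Q)$; by the same normalization step we may take $\int e^{\phi_n}\,d\nu=1$, so $L(\phi_n,Q)=\int\phi_n\,dQ$. Since $Q\ll\nu$ we have $Q(\cup_F O_F)=1$ and $\inter(\csupp(Q))$ is nonempty; if the modes of $\phi_n$ escaped to infinity or the mode heights $\sup_x\phi_n(x)$ diverged, then by concavity and $\int e^{\phi_n}\,d\nu=1$ the densities $e^{\phi_n}$ would concentrate, forcing $\phi_n\to-\infty$ on a set of positive $Q$-measure and hence $\int\phi_n\,dQ\to-\infty$, contradicting maximality. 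Thus $\sup_n\sup_x\phi_n(x)<\infty$, which combined with the Step~1 tail bound gives $\phi_n(x)\le C_0-b\,d(x,x_0)$ for all $x,n$; the $h(Q,\cdot)$-argument of Theorem~\ref{th:uniqueness} bounds the $\phi_n$ below uniformly on compact subsets of $\inter(\csupp(Q))\cap\cup_F O_F$. On each top orthant $\bar O_F\cong\mathbb{R}^p_{\ge 0}$ geodesics are Euclidean segments, so $\phi_n|_{\bar O_F}$ is concave in the ordinary sense; being locally uniformly bounded it is locally uniformly Lipschitz on $O_F$, and an Arzel\`a--Ascoli argument plus a diagonal extraction over the finitely many $F\in\Omega_p$ gives a subsequence (still denoted $\phi_n$) converging pointwise on $\cup_F O_F$ to some $\phi_\infty$. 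Let $\psi$ be the u.s.c.\ extension of $\phi_\infty$ to $\mathcal{O}(\mathcal{E},\Omega)$: it is concave (the geodesic-concavity inequality passes to the limit, a geodesic meeting the lower strata only finitely often), u.s.c.\ by construction, and coercive since $\psi(x)\le a-b\,d(x,x_0)$ for $d(x,x_0)\ge R$, so $\psi\in\Phi$.

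\textbf{Step 3: $L(\psi,Q)=L(Q)$, and conclusion.} Because $C_0-b\,d(\cdot,x_0)-\phi_n\ge 0$ with $d(\cdot,x_0)\in L^1(Q)$, Fatou's lemma gives $\int\psi\,dQ\ge\limsup_n\int\phi_n\,dQ$ (using $Q(\cup_F O_F)=1$, where $\psi=\lim_n\phi_n$), while Fatou's lemma applied to $e^{\phi_n}\to e^{\psi}$ gives $\int e^{\psi}\,d\nu\le\liminf_n\int e^{\phi_n}\,d\nu$. Hence $L(\psi,Q)=\int\psi\,dQ-\int e^{\psi}\,d\nu+1\ge\limsup_n\int\phi_n\,dQ-\liminf_n\int e^{\phi_n}\,d\nu+1\ge\limsup_n L(\phi_n,Q)=L(Q)$, using $\limsup a_n-\liminf b_n\ge\limsup(a_n-b_n)$; since $\psi\in\Phi$, equality follows, so $\psi\in\argmax_{\phi\in\Phi}L(\phi,Q)$. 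With (i) and (ii) in hand, the remaining claims follow as listed in the first paragraph.
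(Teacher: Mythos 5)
Your skeleton (lower bound via $\phi_0=-d(\cdot,x_0)-\log Z$, normalization, extraction of a limit along a maximizing sequence, then Fatou in both directions) matches the paper's, but the two load-bearing steps are not actually established, and one is stated in a form that is false. You claim that every $\phi\in\Phi$ with $\int e^{\phi}\,d\nu=1$ satisfies a tail bound $\phi(x)\le a-b\,d(x,x_0)$ for $d(x,x_0)\ge R$ with $a,b,R$ ``depending only on the space.'' No such $\phi$-independent envelope exists: take a fixed normalized log-concave density and translate it to distance $T$ from $x_0$; for large $T$ its values near its own mode exceed any fixed linear envelope anchored at $x_0$. More fundamentally, the finiteness $L(Q)<\infty$ cannot come from a $Q$-free estimate at all — it genuinely uses that $Q$ has a density (for a point mass, or a $Q$ carried by a lower-dimensional convex set, $L(Q)=\infty$). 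The paper's argument is: if $M_F=\max_{O_F}\phi$ is large, then by Lemma~\ref{lem:level_nu} the upper level set $D^{\phi}_{-cM_F}\cap O_F$ has small $\nu$-measure, hence by Lemma~\ref{lem:closedconvex_nu} its $Q$-measure stays bounded away from $Q(O_F)$, so a fixed fraction of the $Q$-mass sits where $\phi\le -cM_F$ and $\int\phi\,dQ\to-\infty$. You defer exactly this (``transcribing the Euclidean volume estimates \dots is the main obstacle''), and your Step~2 assertion that ``the densities would concentrate, forcing $\phi_n\to-\infty$ on a set of positive $Q$-measure'' is the same unproven core; so neither $L(Q)\in\mathbb{R}$ nor the uniform envelope $\phi_n\le C_0-b\,d(\cdot,x_0)$ that your compactness step relies on has been proved.

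A secondary, probably repairable, issue: your Arzel\`a--Ascoli extraction only has uniform two-sided bounds on compact subsets of $\inter(\csupp(Q))\cap\cup_F O_F$ (the lower bound via $h(Q,\cdot)$ does not extend outside $\csupp(Q)$, where the restricted $\phi_n$ equal $-\infty$), and the concavity and upper semicontinuity of the glued limit along geodesics that cross orthant boundaries is asserted rather than proved. The paper avoids both difficulties by extracting the limit through hypograph convergence (Lemma~\ref{lem:proj-set} and Lemma~\ref{lem:limit_conv}): the limit hypograph is automatically closed and convex in the CAT(0) sense, so the limit function is u.s.c.\ and concave by construction, with $\limsup_{k,x\to y}\phi_{N(k)}(x)\le\psi(y)\le\bar\phi(y)$ globally and pointwise convergence on $\inter(\csupp(Q))\cap\cup_F O_F$; your Step-3 Fatou argument then goes through essentially as in the paper (the paper in fact gets $\int e^{\psi}\,d\nu=1$ by dominated convergence using the envelope, while your Fatou-plus-Lemma~\ref{lem:maximizer-is-density} route is fine). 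To make the proposal a proof, you must supply the $Q$-dependent envelope and the finiteness argument — i.e., the analogues of Lemmas~\ref{lem:level_nu}, \ref{lem:closedconvex_nu} and \ref{lem:Q_h} — which constitute the actual new content of this theorem.
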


The proof is relegated to Appendix \ref{sec:proof-logcon}.

\begin{remark}
    As before, if the space $\mathcal{O}(\mathcal{E}, \Omega)$ has the geodesic extension property, the domain of $\psi$ must contain $\mathrm{int}(\csupp(Q))$ by concavity.
\end{remark}

\subsection{Continuity of log-concave projection map}
In order to prove the consistency of the log-concave MLE, we first show the following continuity result of the log-concave projection map.

\begin{theorem}[Continuity of $L$]\label{th:continuity}
    Assume that the probability measures $(Q_N)_N$ and $Q$ satisfy the following conditions:
    \begin{itemize}
        \item $(Q_N)_N \rightarrow Q$ in the Wasserstein-1 distance.
        \item For some $x_0$, $\int d(x, x_0) dQ_N < \infty$ for all $N$, and $\int d(x, x_0) dQ < \infty$.
        \item $Q$ has a density with respect to $\nu$.
        \item There exists $N_0 \in \mathbb{N}$ such that, for all $N \geq N_0$, the maximizer of $L(\phi, Q_N)$ exists.
    \end{itemize}
    Then, $\lambda \coloneqq \lim_{N\to\infty}L(Q_N) = L(Q)$.

    Furthermore, $\lim_{N\to\infty}\int|\exp(\psi_{Q_N})-\exp(\psi_Q)|d\nu = 0$.
\end{theorem}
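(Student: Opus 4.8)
The plan is to adapt the continuity argument of \citet{Dumbgen2011-mq} from the Euclidean setting. The proof splits into two halves: first establish $\lim_{N\to\infty}L(Q_N)=L(Q)$ by proving the two one-sided inequalities $\liminf_N L(Q_N)\ge L(Q)$ and $\limsup_N L(Q_N)\le L(Q)$; then deduce the $L^1$-convergence of the log-concave projections from pointwise $\nu$-almost-everywhere convergence together with Scheff\'e's lemma. Throughout I will use two standard facts about the Wasserstein-$1$ distance: $W_1$-convergence is equivalent to weak convergence together with convergence of $\int d(x,x_0)\,dQ_N$ (which in turn forces uniform integrability of $d(\cdot,x_0)$ under $(Q_N)$), and by Kantorovich--Rubinstein duality $\int\varphi\,dQ_N\to\int\varphi\,dQ$ for every Lipschitz $\varphi$. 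Also, by Lemma~\ref{lem:maximizer-is-density} and Theorem~\ref{th:existence}, $\int e^{\psi_{Q_N}}\,d\nu=1$ for $N\ge N_0$ and $\int e^{\psi_Q}\,d\nu=1$, so $L(Q_N)=\int\psi_{Q_N}\,dQ_N$ for $N\ge N_0$ and $L(Q)=\int\psi_Q\,dQ$.

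For the lower bound, note first that for any Lipschitz (hence everywhere-finite) $\phi\in\Phi$ the map $Q'\mapsto L(\phi,Q')=\int\phi\,dQ'-\int e^\phi\,d\nu+1$ is $W_1$-continuous, since the first term converges by duality and the second does not depend on $Q'$. As $L(Q_N)\ge L(\phi,Q_N)$ for every fixed $\phi$, this gives $\liminf_N L(Q_N)\ge L(\phi,Q)$ for all Lipschitz $\phi\in\Phi$, and it remains to show $\sup\{L(\phi,Q):\phi\in\Phi\ \text{Lipschitz}\}=L(Q)$. I would prove this by approximating the optimizer $\psi:=\psi_Q$ from within $\Phi$: for large $R$, keep $\psi$ on the compact convex superlevel set $D_R=\{\psi\ge\sup\psi-R\}$ (compact by coercivity), bend it down to bounded slope near $\partial D_R$ if necessary, and outside $\dom(\psi)$ extend by a steep coercive cone $x\mapsto a_R-c_R\,d(x,D_R)$, which is concave because $D_R$ is convex in the CAT(0) space. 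The resulting $\phi_R\in\Phi$ is everywhere finite and Lipschitz, and one checks that $\int\phi_R\,dQ\to\int\psi\,dQ$ (using $\int\psi^-\,dQ<\infty$, which follows from $L(Q)\in\mathbb{R}$, and that $\partial(\dom\psi)$ is $Q$-null since $Q$ has a density) while $\int e^{\phi_R}\,d\nu\to\int e^\psi\,d\nu=1$, so $L(\phi_R,Q)\to L(Q)$.

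For the upper bound I would use the projections $\psi_{Q_N}$, $N\ge N_0$, and establish a tightness statement for this sequence, adapting the uniform bounds of \citet{Dumbgen2011-mq} and \citet{Cule2010-xh}: the peaks $\sup\psi_{Q_N}$ are uniformly bounded above, the superlevel sets $\{\psi_{Q_N}\ge\sup\psi_{Q_N}-t\}$ lie in a fixed compact set for each $t$, and $\psi_{Q_N}\to-\infty$ uniformly as $d(\cdot,x_0)\to\infty$. These should follow from $\int e^{\psi_{Q_N}}\,d\nu=1$, the bound $\int\psi_{Q_N}\,dQ_N=L(Q_N)$ (itself bounded, by the lower bound just proved), coercivity, the uniform integrability of $d(\cdot,x_0)$ under $(Q_N)$, and---crucially---the hypothesis that $Q$ has a density, which prevents the mass of the $Q_N$ from escaping to infinity or concentrating on the lower-dimensional faces where orthants are glued. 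Restricting to each of the finitely many maximal orthants $\bar{O}_F$, $F\in\Omega_p$, where the geometry is Euclidean, and diagonalizing, any subsequence of $(\psi_{Q_N})$ then has a further subsequence along which $\exp(\psi_{Q_N})\to g$ pointwise $\nu$-a.e.; here $g=\exp(\psi_\infty)$ is log-concave, $\int g\,d\nu=1$ (by the tightness, no mass is lost), and $\psi_\infty$ is coercive, so $\psi_\infty\in\Phi$. Since pointwise convergence of concave functions upgrades to locally uniform convergence on the interiors of their domains (away from the gluing faces), this together with the uniform upper bound, the tightness, and the $Q$-nullity of $\partial(\dom\psi_\infty)$ yields the Portmanteau--Fatou type inequality $\limsup_N\int\psi_{Q_N}\,dQ_N\le\int\psi_\infty\,dQ$. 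Therefore $L(Q)\ge L(\psi_\infty,Q)=\int\psi_\infty\,dQ-\int e^{\psi_\infty}\,d\nu+1=\int\psi_\infty\,dQ\ge\limsup_N L(Q_N)$.

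Combining the two bounds gives $L(Q_N)\to L(Q)=:\lambda$. For the $L^1$-statement, the argument above shows that every subsequence of $(\exp\psi_{Q_N})$ has a further subsequence converging $\nu$-a.e. to a log-concave probability density $\exp(\psi_\infty)$ with $L(\psi_\infty,Q)=L(Q)$; by Theorem~\ref{th:uniqueness} (with the ``smallest version'' convention) this limit equals $\exp(\psi_Q)$ $\nu$-a.e., independently of the subsequence, so $\exp(\psi_{Q_N})\to\exp(\psi_Q)$ $\nu$-a.e. As all these are probability densities with respect to $\nu$, Scheff\'e's lemma gives $\int|\exp(\psi_{Q_N})-\exp(\psi_Q)|\,d\nu\to0$. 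I expect the main obstacle to be the tightness statement of the third paragraph in the genuinely non-Euclidean setting: one must rule out that the mass of the log-concave projections collapses onto the lower-dimensional gluing faces, or leaks to infinity, and obtain uniform peak and tail control there---and it is precisely the assumption that $Q$ has a density that must be leveraged. A secondary technical point is making the everywhere-finite concave approximants $\phi_R$ of the second paragraph genuinely concave and Lipschitz across the orthant gluing when $\dom(\psi_Q)$ is unbounded or $\psi_Q$ has unbounded gradient at the boundary of its domain.
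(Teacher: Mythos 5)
Your overall architecture coincides with the paper's: two one-sided inequalities, with Lipschitz concave test functions handling $\liminf_N L(Q_N)\ge L(Q)$ and a compactness/tightness argument for the projections $\psi_{Q_N}$ handling $\limsup_N L(Q_N)\le L(Q)$, followed by identification of the subsequential limit via Theorem~\ref{th:uniqueness} and an $L^1$ upgrade (your Scheff\'e step is an acceptable substitute for the paper's dominated-convergence step, and your Kantorovich--Rubinstein shortcut for Lipschitz test functions is a legitimate simplification of the paper's Skorokhod--Vitali argument). However, the proposal leaves genuine gaps exactly at the two places where the non-Euclidean geometry does the work, and in both cases you assert rather than prove the needed facts. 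First, the tightness package for $(\psi_{Q_N})$ --- uniform upper bound on the peaks $M_N$, a uniform lower bound $\liminf_N\psi_{Q_N}(x_0)>-\infty$ at points of $\inter(\csupp(Q))\cap\cup_F O_F$, and a single coercive majorant $-a\,d(\cdot,x_0)+b$ valid for all large $N$ --- does not simply ``follow from'' the listed ingredients; in the paper it is the content of Lemma~\ref{lem:closeconvex_nu_weak} (small-$\nu$-measure closed convex sets cannot carry $Q_N$-mass close to $Q(O_F)$, uniformly in $N$, which combined with Lemma~\ref{lem:level_nu} bounds the peaks), Lemma~\ref{lem:Q_n_h} ($\limsup_N h(Q_N,x)\le h(Q,x)<1$, which gives the lower bound at interior points), a per-orthant construction of the linear majorant using $p+1$ affinely independent points of $\inter(\csupp(Q))\cap O_F$, and the hypograph-convergence machinery of Lemmas~\ref{lem:proj-set} and~\ref{lem:limit_conv} for extracting the limit $\psi_\infty\in\Phi$ with $\int e^{\psi_\infty}d\nu=1$. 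Writing ``these should follow from \ldots and it is precisely the assumption that $Q$ has a density that must be leveraged'' names the obstacle but does not close it; without these uniform bounds your subsequential a.e.\ limit, the normalization of the limit density, and the Portmanteau--Fatou inequality $\limsup_N\int\psi_{Q_N}\,dQ_N\le\int\psi_\infty\,dQ$ are all unsupported.

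Second, your construction of the Lipschitz approximants for the lower bound --- restrict $\psi_Q$ to a superlevel set $D_R$, ``bend it down to bounded slope,'' and patch with the cone $a_R-c_R\,d(\cdot,D_R)$ --- is not concave as stated: gluing a concave function on $D_R$ to a cone outside $D_R$ fails to be concave unless the slopes are matched carefully, and in an orthant space there is no off-the-shelf way to do this across the gluing faces (you flag this yourself as a ``secondary technical point,'' but it is essential). The paper's resolution is the sup-convolution $\psi_Q^{(M)}(x)=\sup_y\{\psi_Q(y)-M\,d(x,y)\}$ of Lemma~\ref{lem:lipschitz-majorization}, whose concavity rests on the joint convexity of the CAT(0) distance (Lemma~\ref{lem:joint_convexity_of_geodesic}); it is $M$-Lipschitz, coercive, and decreases pointwise to $\psi_Q$, after which monotone and dominated convergence give $L(\psi_Q^{(M)},Q)\to L(Q)$. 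Replacing your patching construction by this regularization (and supplying the tightness lemmas above) would turn your outline into a complete proof along the paper's lines.
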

The proof can be found in Appendix \ref{sec:proof-logcon}.

\begin{remark}
   Note that convergence in the Wasserstein-1 distance is equivalent to satisfying the following two conditions (e.g., \cite{Villani2016-vm}, Definition 6.8 and Theorem 6.9):
    \begin{itemize}
        \item $(Q_n)_n \xrightarrow{d} Q$
        \item There exists $x_0$ such that $\int d(x, x_0) dQ_n \rightarrow \int d(x, x_0) dQ$.
    \end{itemize}
    In particular, the second condition holds for arbitrary $x_0$.
\end{remark}

As a corollary to this theorem, we establish a condition for the consistency of the log-concave maximum likelihood estimator.
\begin{corollary}[Consistency of maximum likelihood estimator] \label{cor:consistency_lcmle}
Suppose $\{X_N\}_N$ is an i.i.d. sample from a probability measure $P$ that has a density with respect to $\nu$ and satisfies $\int d(x, x_0) dP < \infty$. Let $\mathbb{P}_N$ denote the empirical measure.
        
        Then, for any $\epsilon > 0$, there exists $N_0 \in \mathbb{N}$ such that if $N \geq N_0$, with probability greater than $1 - \epsilon$, the maximum likelihood estimator $\exp(\psi_{\mathbb{P}_N})$ for the sample ${X_1, \ldots, X_N}$ exists and 
        \begin{align}
         \lim_{N\to\infty}\int |\exp(\psi_{\mathbb{P}_N})(x) - \exp(\psi_P)(x)| d\nu = 0.
        \end{align}
        
        In other words, the maximum likelihood estimator converges to the log-concave projection in the total variation distance.
\end{corollary}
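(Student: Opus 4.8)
The plan is to derive Corollary~\ref{cor:consistency_lcmle} as a direct consequence of Theorem~\ref{th:continuity} by verifying its four hypotheses for the sequence $Q_N = \mathbb{P}_N$ and $Q = P$, on an event of probability at least $1-\epsilon$. The maximum likelihood estimator of an i.i.d.\ sample is precisely the log-concave projection of the empirical measure, i.e.\ the maximizer of $L(\cdot,\mathbb{P}_N)$, so once the hypotheses hold the conclusion $\int|\exp(\psi_{\mathbb{P}_N})-\exp(\psi_P)|\,d\nu\to 0$ is exactly the total variation convergence claimed.

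First I would address the probabilistic inputs. By the strong law of large numbers in CAT(0) orthant space (applied coordinatewise within each orthant, or via the standard real-valued SLLN applied to the integrable function $x\mapsto d(x,x_0)$), $\int d(x,x_0)\,d\mathbb{P}_N \to \int d(x,x_0)\,dP$ almost surely, which in particular makes $\int d(x,x_0)\,d\mathbb{P}_N$ finite for all large $N$ almost surely (it is in fact always finite, being a finite average). Combined with the Glivenko--Cantelli-type weak convergence $\mathbb{P}_N \xrightarrow{d} P$ (which holds a.s.\ for the empirical measure of an i.i.d.\ sample in any Polish space), the remark following Theorem~\ref{th:continuity} gives $\mathbb{P}_N \to P$ in the Wasserstein-1 distance almost surely. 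Fixing $\epsilon>0$ and using Egorov/countable-intersection arguments, there is an event $A$ with $\mathbb{P}(A)>1-\epsilon$ and an $N_0$ such that on $A$, for all $N\geq N_0$, the Wasserstein-1 convergence and the first-moment bounds hold. The third hypothesis, that $P$ has a density with respect to $\nu$, is assumed directly.

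The remaining hypothesis—existence of the maximizer of $L(\phi,\mathbb{P}_N)$ for all large $N$—is where the argument needs care, and I expect this to be the main obstacle. One cannot simply invoke Theorem~\ref{th:existence}, because the empirical measure $\mathbb{P}_N$ is discrete and has no density with respect to $\nu$; instead one must use the sample-size condition. As noted in the text (referencing \cite{Cule2010-xh} in the Euclidean case and \cite{Takazawa2024-ed} for BHV tree space, with the remark that the argument extends to CAT(0) orthant spaces), the log-concave MLE of an i.i.d.\ sample exists with probability one once $N$ exceeds the dimension $p$ plus one, provided the sample points are in ``general position'' (e.g.\ not all contained in a lower-dimensional convex subset). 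I would invoke this existence result: for $N \geq p+1$, with probability one the convex hull of $\{X_1,\dots,X_N\}$ is full-dimensional and the maximizer of $L(\cdot,\mathbb{P}_N)$ exists. Shrinking the event $A$ (intersecting with this probability-one event) and enlarging $N_0$ to also exceed $p+1$, all four hypotheses of Theorem~\ref{th:continuity} are met on $A$ for $N\geq N_0$.

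Finally, applying Theorem~\ref{th:continuity} on the event $A$ yields both $L(\mathbb{P}_N)\to L(P)$ and $\int|\exp(\psi_{\mathbb{P}_N})-\exp(\psi_P)|\,d\nu\to 0$, which is the asserted convergence. The one point requiring mild bookkeeping is that Theorem~\ref{th:continuity} is a deterministic statement about sequences, so one must phrase the conclusion as: on the event $A$ (of probability $>1-\epsilon$), the estimator exists for all $N\geq N_0$ and the limit holds—exactly the form of the corollary. No genuinely new estimates are needed beyond the cited existence result for the MLE and the standard SLLN/weak-convergence facts for empirical measures.
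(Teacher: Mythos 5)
Your overall route is exactly the paper's: verify that $\mathbb{P}_N \to P$ in Wasserstein-1 distance (almost-sure weak convergence of the empirical measure via Varadarajan's theorem, plus the strong law of large numbers applied to $x \mapsto d(x,x_0)$, combined through the remark following Theorem~\ref{th:continuity}), check the remaining hypotheses, and then apply Theorem~\ref{th:continuity} with $Q_N = \mathbb{P}_N$, $Q = P$. That part of your argument is fine, and the event-$A$ bookkeeping is harmless (the paper simply leaves it implicit).

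The genuine gap is in your treatment of the existence hypothesis. You claim that for $N \geq p+1$ the sample points are in general position with probability one, the convex hull is full-dimensional, and hence the maximizer of $L(\cdot,\mathbb{P}_N)$ exists almost surely — this is the Euclidean criterion of Cule et al., and it is precisely what fails in CAT(0) orthant spaces. The paper states this explicitly in its conclusion: conditions analogous to the Euclidean ones do not guarantee existence here, because even when more than $p+1$ points are drawn from a density, their geodesic convex hull may retain low-dimensional components (spikes crossing orthant boundaries), which makes the likelihood unbounded. The correct way to fill this hypothesis is the one the paper takes: cite the sufficient condition of \cite{Takazawa2024-ed} (which the paper notes extends verbatim to CAT(0) orthant spaces), under which the MLE exists not almost surely for each fixed $N \geq p+1$ but with probability exceeding $1-\epsilon$ once $N \geq N_0$ — this is exactly why the corollary is phrased with ``probability greater than $1-\epsilon$'' rather than probability one. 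As written, your step ``with probability one the convex hull of $\{X_1,\dots,X_N\}$ is full-dimensional and the maximizer exists'' is false in this setting and needs to be replaced by an appeal to that weaker, probability-tending-to-one existence statement.
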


\begin{proof}
    By Varadarajan’s theorem \cite{Varadarajan1958-co}, the empirical measure converges weakly to the true measure. By the law of large numbers, $\int d(x,x_0)d\mathbb{P}_N \to \int d(x,x_0) dP$ with probability 1. Together, these results imply that the empirical measure converges to $P$ in the Wasserstein-1 distance.
\end{proof}



The following result describes some properties of the weak convergence of log-concave densities. This result generalizes Proposition 2 of \cite{Cule2010-uy}.

\begin{proposition}\label{prop: weak_conv_log_conc}
Let $f_N$ be a sequence of log-concave densities in $\exp(\Phi)$ that weakly converges to some density $f$. Then,
    \begin{enumerate}
        \item $f$ has a log-concave version ($\exists g$: log-concave such that $g=f~\mathrm{a.e.}$)
        \item $f_N \to f$ $\nu$-almost everywhere
        \item Let $a_0>0$ and $b_0\in\mathbb{R}$ be such that $f(x)\leq \exp(-a_0 d(x,x_0) + b_0)$. Assume also that $\dom \log f_N \subseteq \dom \log f$. Then for every $a<a_0$, we have $\int e^{a d(x,x_0)}|f_N(x)-f(x)|d\nu(x)\to 0$.  Furthermore, if $f$  is continuous and $\mathcal{O}(\mathcal{E},\Omega)$ has the geodesic extension property, uniform convergence also holds:
\begin{align}
    \sup_{x\in\mathcal{O}(\mathcal{E},\Omega)} e^{ad(x,x_0)}|f_N(x)-f(x)|\to 0.
\end{align}
    \end{enumerate}
\end{proposition}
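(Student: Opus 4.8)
The plan is to mimic the Euclidean argument of Cule–Samworth (Proposition 2 of \cite{Cule2010-uy}), adapting each step to the orthant geometry, and handling the gluing locus $\mathcal{O}(\mathcal{E},\Omega)\setminus\cup_F O_F$ — a $\nu$-null set — by working orthant-by-orthant. The key structural fact I would rely on is that a concave function on a geodesic space, restricted to a geodesic, is an ordinary concave function of a real parameter; combined with the CAT(0) property this gives the same pointwise control and local uniform Lipschitz bounds on compacta that one has in $\mathbb{R}^p$.

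For part (1), first I would extract, from weak convergence and tightness, a uniform envelope: since each $f_N=e^{\phi_N}$ is log-concave and integrates to $1$, there exist constants $a_N>0,b_N$ with $f_N(x)\le e^{-a_N d(x,x_0)+b_N}$, and I would show $\limsup_N \sup_x f_N$ is finite and the $a_N$ are bounded below away from $0$ along a subsequence (otherwise mass escapes and tightness fails). This yields a locally uniform bound on the $\phi_N$ from above on any ball, hence — by concavity, using Lemma~\ref{lem:joint_convexity_of_geodesic} to get the standard "slope" estimates — equi-Lipschitz bounds for $\phi_N$ on compact subsets of $\cup_F O_F$ that stay in the eventual common domain. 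An Arzelà–Ascoli / diagonal argument then gives a subsequence converging locally uniformly on $\cup_F O_F$ to a concave limit $g$; weak convergence forces $e^{g}=f$ a.e., and one checks every subsequential limit agrees a.e., so $f$ itself has the log-concave version $g$. Part (2) is then immediate: the locally uniform convergence on $\cup_F O_F$ just established, applied to the full sequence via the subsequence-of-every-subsequence principle, gives $f_N\to f$ $\nu$-a.e. (the complement $\cup_F\partial O_F$ being $\nu$-null).

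For part (3), the tail control is the crux. Given $a<a_0$, I would fix a large radius $R$; on the ball $B(x_0,R)$ the $L^1$ convergence of $e^{ad(\cdot,x_0)}|f_N-f|$ follows from the a.e. convergence of part (2) together with a dominated-convergence argument, the dominating function coming from a uniform envelope $e^{-a'd(x,x_0)+b'}$ valid for $N$ large with $a<a'<a_0$ — this uniform envelope is exactly what needs to be squeezed out of the hypothesis $f(x)\le e^{-a_0 d(x,x_0)+b_0}$ plus the assumption $\dom\log f_N\subseteq\dom\log f$, by a supporting-hyperplane (supporting affine function along geodesics emanating from $x_0$) comparison showing the $\phi_N$ cannot have heavier tails than $\log f$ up to a vanishing additive error. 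Outside $B(x_0,R)$, the integral is bounded by $\int_{d(x,x_0)>R} e^{ad(x,x_0)}(f_N+f)\,d\nu$, which is $\le C e^{-(a'-a)R}$ uniformly in $N$ by the envelope, and hence is made arbitrarily small by choosing $R$ large. For the final uniform-convergence claim under the geodesic extension property and continuity of $f$: here $g=\log f$ is finite and continuous on all of $\mathcal{O}(\mathcal{E},\Omega)$ (the geodesic extension property rules out a proper domain, as in the remark after Theorem~\ref{th:existence}), so the $\phi_N$ converge to $g$ locally uniformly on the whole space by the Arzelà–Ascoli argument above without the caveat about $\cup_F O_F$; combining locally uniform convergence of $f_N$ to $f$ on $B(x_0,R)$ with the uniform exponential tail bound on $e^{ad(\cdot,x_0)}(f_N+f)$ outside $B(x_0,R)$ upgrades this to the stated sup-norm convergence of $e^{ad(\cdot,x_0)}|f_N-f|$.

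The main obstacle I anticipate is establishing the \emph{uniform} exponential envelope for the $f_N$ — i.e., that one can choose $a'>a$ and $b'$ with $f_N(x)\le e^{-a'd(x,x_0)+b'}$ for all large $N$ simultaneously. In $\mathbb{R}^p$ this uses a supporting hyperplane of $\log f$ and the fact that a log-concave density lying below its supporting affine function forces exponential decay with a rate controlled by that hyperplane; transplanting this to the orthant space requires that supporting affine functions "propagate" correctly across the gluing and along extended geodesics, which is precisely where the geodesic extension property (in the uniform part) and otherwise a careful orthant-wise patching argument enter. Once that envelope is in hand, the split into $B(x_0,R)$ and its complement, plus dominated convergence, is routine.
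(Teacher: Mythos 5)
Your route for parts (1)–(2) is genuinely different from the paper's, and as sketched it has a real gap. You propose equi-Lipschitz bounds plus Arzel\`a--Ascoli for $\phi_N=\log f_N$ on compact subsets of $\cup_F O_F$, but local Lipschitz control of a concave function needs a two-sided bound: the locally uniform \emph{lower} bound (equivalently, that a given compact set eventually sits well inside $\dom\phi_N$ with $\phi_N$ bounded below there) is precisely the hard part, and nothing in your sketch supplies it — this is the role played in the paper's other proofs by the $h(Q,x)$ machinery (Lemmas~\ref{lem:Q_h} and~\ref{lem:Q_n_h}). Moreover, statement (2) is an a.e.\ claim on the whole space, including points where $f=0$ and points near $\partial(\dom\log f)$; locally uniform convergence on an ``eventual common domain'' says nothing there. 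The paper avoids functional compactness entirely: it upgrades weak convergence to uniform convergence over the class of closed convex sets (Theorem~\ref{th:uniform_convex}, the Rao-type result), then argues at Lebesgue points using small convex sets $D_{k,\delta}$ and concavity, with a separate case handling $f(x_0)=0$. If you want to keep your compactness route you must prove the uniform lower bounds and treat the zero set of $f$ separately, which essentially recreates that machinery.

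For part (3) you correctly identify the uniform envelope $f_N\le e^{-a'd(\cdot,x_0)+b'}$ as the crux, but you leave it as an anticipated obstacle rather than proving it, and the fix you gesture at (supporting affine functions) does not exist on an orthant space. The working substitute is the monotonicity of the difference quotient $t\mapsto\{\phi(\gamma_{x_i,x}(t))-\phi(x_i)\}/t$ along geodesics — exactly the computation in Lemma~\ref{lem:majorize_func} — applied with one base point $x_i$ per maximal orthant charged by $f$, combined with the pointwise convergence from part (2) at those finitely many points and on small balls around them; this yields a single envelope valid for all large $N$, and no geodesic extension property is needed for the weighted $L^1$ statement (it enters only in the final sup-norm claim). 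Finally, your assertion that continuity of $f$ plus the geodesic extension property forces $\log f$ to be finite on all of $\mathcal{O}(\mathcal{E},\Omega)$ is false (a continuous log-concave density may vanish outside a bounded convex set; the remark after Theorem~\ref{th:existence} only gives $\inter(\csupp Q)\subseteq\dom\psi$). The paper instead works on the convex set $S=\{x\in\bar B_R(x_0)\mid f(x)\ge \epsilon e^{-aR}/2\}\subseteq\inter(\dom\log f)$, where uniform convergence holds, and controls $\bar B_R\setminus S$ and the tail by the smallness of $f$ and $f_N$ there; your argument needs the same restriction.
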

The proof is relegated to Appendix \ref{sec:proof-logcon}.

This proposition implies that the convergence statement in Corollary \ref{cor:consistency_lcmle} can be strengthened as follows:: For $a>0$,  $b\in\mathbb{R}$ and $x_0\in \mathcal{O}(\mathcal{E},\Omega)$ that satisfies $\psi_{P}\leq -ad(x,x_0)+b$, 
\begin{align}
    \lim_{n\to\infty}\int \exp(ad(x,x_0))|\exp(\psi_{P_n})(x) - \exp(\psi_P)(x)| d\nu = 0.
\end{align}

\begin{remark}[Product space setting]
In phylogenetic applications, it is common to consider not only internal branch lengths (modeled in BHV tree space) but also external branch lengths. This leads naturally to the product space $\mathcal{T}_n \times \mathbb{R}^m$, combining a CAT(0) orthant space with a Euclidean space. Let $\mathcal{X}$ be a CAT(0) orthant space with metric $d_\mathcal{X}$ and reference measure $\nu_\mathcal{X}$, and consider the product space $\mathcal{X}\times\mathbb{R}^m$ equipped with the product metric $d\big((x,u),(x',u')\big)=\sqrt{d_\mathcal{X}(x,x')^2+\|u-u'\|^2}$ and the measure $\nu=\nu_\mathcal{X}\otimes\lambda_m$. All results in this section remain valid on $\mathcal{X} \times \mathbb{R}^m$ when $d$ and $\nu$ are replaced accordingly.
\end{remark}

\subsection{Computation and visualization of the log-concave MLE}
The practical use of the log-concave maximum likelihood estimator is limited by computational limitations. 
The support of the log-concave MLE coincides with the convex hull of the sample points in the CAT(0) orthant space $\mathcal{O}(\mathcal{E}, \Omega)$. 
In the optimization algorithm proposed in \cite{Takazawa2024-ed}, optimization is performed over an $N$-dimensional vector associated with 
the sample $\{X_i \in \mathcal{O}(\mathcal{E}, \Omega)\}_{i=1}^N$. At each iteration, the convex hull of the lifted points $\{(X_i,y_i)\}$ must be computed, where the additional coordinates $y_i$ correspond to optimization variables. 
This requires constructing convex hulls in the product of tree space and $\mathbb{R}$, a task that rapidly becomes intractable as the number of taxa increases.
Consequently, implementations are currently limited to one- and two-dimensional cases, and it remains an open problem to develop algorithms that either avoid explicit convex hull computation or yield efficient approximations.

To illustrate the behavior of the estimator, we plot the estimated density alongside the true density for two examples in the 3-spider space. The two densities are defined as follows:
\begin{align}
    f_1(x) &\propto \exp\left(- \tfrac{d(x,x_0)^2}{2 \cdot 0.5^2} \right), \label{eq:f1} \\
    f_2(x) &\propto 0.5 \exp\left(- \tfrac{d(x,x_1)^2}{2 \cdot 0.4^2} \right) 
            + 0.5 \exp\left(- \tfrac{d(x,x_2)^2}{2 \cdot 0.4^2} \right), \label{eq:f2}
\end{align}
where $x_0$ is the point at coordinate $0.5$ on orthant 0, $x_1$ is the point at coordinate $0.6$ on orthant 0, and $x_2$ is the point at coordinate $0.6$ on orthant 1.
From a sample of size $N=1000$, we compute the log-concave MLE and compare it with the true density. 
Figure~\ref{fig:logconcave-viz} presents the estimated densities across the three orthants. 
The support of the estimated density is bounded, reflecting that its support coincides with the convex hull of the sample points.
In the mixture case, the estimator approximates the log-concave projection of the true density, resulting in the smoothing of the two local modes.

\begin{figure}[t]
    \centering
    \begin{subfigure}{0.95\textwidth}
        \centering
        \includegraphics[width=\linewidth]{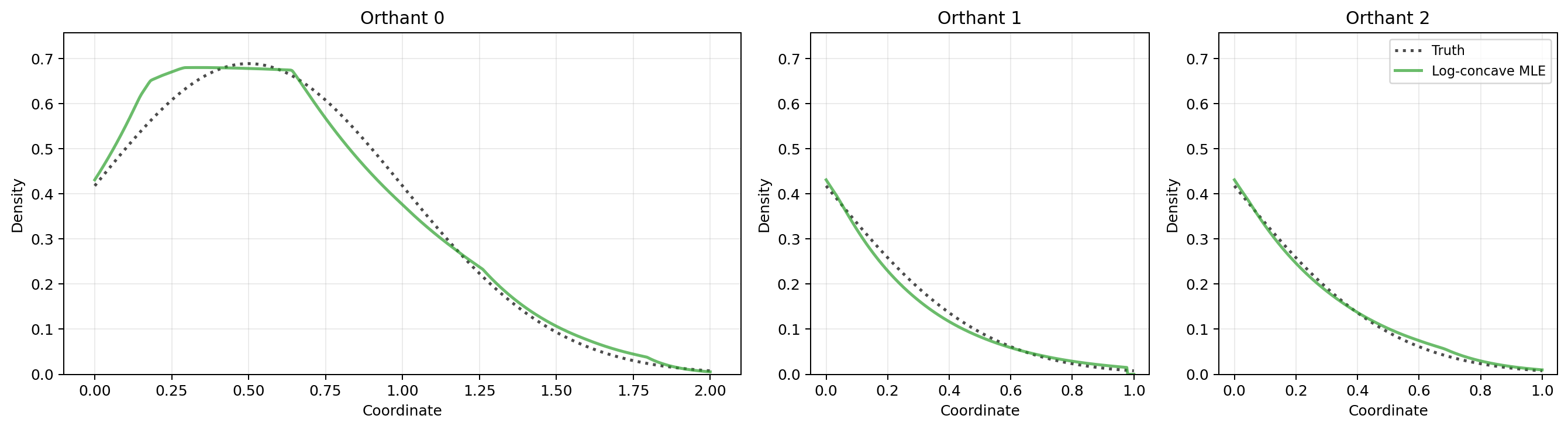}
        \caption{Gaussian-type density $f_1$.}
        \label{fig:logconcave-f1}
    \end{subfigure}
    \vskip\baselineskip
    \begin{subfigure}{0.95\textwidth}
        \centering
        \includegraphics[width=\linewidth]{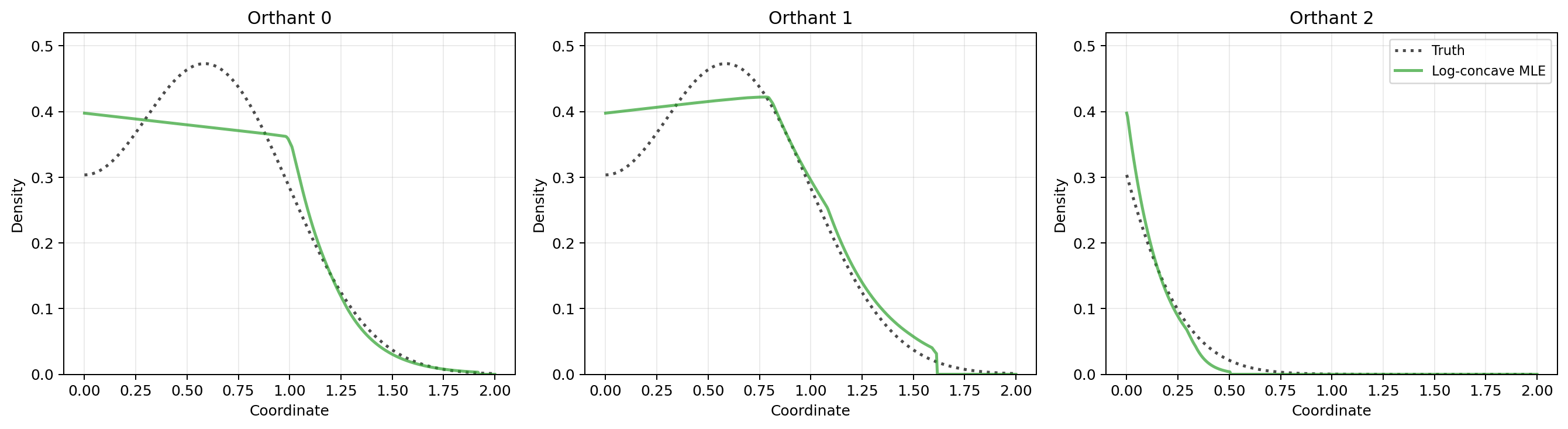}
        \caption{Gaussian-type mixture density $f_2$.}
        \label{fig:logconcave-f2}
    \end{subfigure}
    \caption{Log-concave MLE in the 3-spider space. Each subfigure displays the three orthants, overlaying the true density (black dotted) and the log-concave MLE (green solid). Panel (a) corresponds to the Gaussian-type density $f_1$, and panel (b) to the mixture density $f_2$.}
    \label{fig:logconcave-viz}
\end{figure}

\section{Consistency of Kernel Density Estimator}
\label{sec:kde}
In this section, we first demonstrate that the kernel density estimator proposed in \cite{Weyenberg2014-wd, Weyenberg2017-kh} exhibits boundary bias. We then introduce a slightly modified version of the estimator and establish a uniform consistency result.

\subsection{Kernel density estimator and boundary bias}
\label{sec:boundary_bias}
Let $X_1, X_2, \ldots, X_N$ be independent and identically distributed random variables taking values in the orthant space $\mathcal{O}(\mathcal{E},\Omega)$, drawn from a probability distribution whose density function $f$ is uniformly continuous and bounded on $\mathcal{O}(\mathcal{E},\Omega)$. We estimate $f$ using a kernel density estimator defined as:
\begin{align}
    \hat{f}_{N,h}(x) = \frac{1}{N} \sum_{i=1}^N K_{X_i, h}(x),
\end{align}
where $K_{X_i,h}$ is the kernel function. We do not require $K_{X_i,h}$ to be either symmetric or a density (i.e., to integrate to 1).

\cite{Weyenberg2014-wd, Weyenberg2017-kh} considered the following type of kernel function:
\begin{align}
    K_1(x\mid X_i, h) \coloneqq K_{X_i,h}(x) = C(X_i,h) \exp\left( -\frac{d(x,X_i)^2}{2h^2} \right),
\end{align}
where $C(X_i,h)$ denotes the normalization constant, defined as 
\begin{align}
     C(X_i,h) =\left(\int \exp\left( -\frac{d(x,X_i)^2}{2h^2} \right) d\nu(x) \right)^{-1}. \label{eq:normalizing-constant}
\end{align}
Note that, as mentioned in Section \ref{sec:preliminaries}, the normalizing constant \eqref{eq:normalizing-constant} is location-dependent.

Although the definition of the kernel appears natural, this density generally exhibits an asymptotic bias at the boundary of the orthant space, similar to the case of bounded densities in Euclidean space. The following is a simple example on $k$-spider.

\begin{example}\label{ex:boundary_bias}
    Consider estimating a bounded continuous density $f$ on a $k$-spider ($k \geq 3$). Let $|x|$ denote the coordinate of $x$ on the half-line. The kernel function in this case is:
    \begin{align}
        K_1(x\mid X_i, h) = \frac{1}{\sqrt{2\pi}h (1+(k-2)\Phi(-\|X_i\|/h))}\exp\left(-\frac{d(x,X_i)^2}{2h^2}\right),
    \end{align}
    where $\Phi(u)$ is the cumulative distribution function of standard normal on $\mathbb{R}$. 

    Now, define the kernel-smoothed density $f_h$ by
    \begin{align}
        f_h(y) = \mathbb{E}_X[K_1(y\mid X, h)] = \int K_1(y\mid x, h)f(x) d\nu(x).
    \end{align}
    It can be shown that $f_h(0)\not\to f(0)$ as $h\to 0$ when $f(0)>0$. This implies the asymptotic bias of the kernel density estimator at the origin.

    In fact, 
    \begin{align}
        f_h(0) &= \int \frac{1}{\sqrt{2\pi}h (1+(k-2)\Phi(-\|x\|/h))}\exp\left(-\frac{\|x\|^2}{2h^2}\right) f(x) d\nu(x), \nonumber\\
        &= \sum_{l=1}^k \int_{x\in O_l} \frac{1}{\sqrt{2\pi}h (1+(k-2)\Phi(-\|x\|/h))}\exp\left(-\frac{\|x\|^2}{2h^2}\right) f(x) d\nu(x), \label{eq:integ_all_ort}
    \end{align}
    where $O_l$ represents the $l$-th half-line.
    Let each orthant's contribution (each integrand of \eqref{eq:integ_all_ort}) $I_l(h)$, and restriction of $f$ to $l$-th orthant $f_l$. Then, 
    \begin{align}
        I_l(h) &= \int_0^\infty \frac{1}{\sqrt{2\pi}h(1+(k-2)\Phi(-x/h))}\exp\left(-\frac{x^2}{2h^2}\right) f_l(x) dx \nonumber\\
        &= \int_0^\infty \frac{\phi(y)}{1+(k-2)\Phi(-y)} f_l(hy) dy \nonumber\\
        &= f(0)\int_0^\infty \frac{\phi(y)}{1+(k-2)\Phi(-y)}dy \nonumber \nonumber \\
        &+  \int_0^\infty \frac{\phi(y)}{1+(k-2)\Phi(-y)} (f_l(hy)-f(0)) dy. \label{eq:il}
    \end{align}
    Here, $\phi(y)$ denotes the density function of the standard normal distribution.

    Because of the boundedness and continuity of $f$, the second term vanishes as $h\to 0$. Then, as $h\to 0$,
    \begin{align}
        I_l(h) \to f(0)\int_0^\infty \frac{\phi(y)}{1+(k-2)\Phi(-y)}dy = \frac{f(0)}{k-2}\log\left(\frac{k}{2}\right). \label{eq:il_limit}
    \end{align}
    Consequently, as $h\to 0$,
    \begin{align}
        f_h(0)\to f(0)\frac{k}{k-2}\log\left(\frac{k}{2}\right) > f(0).
    \end{align}
\end{example}

Example \ref{ex:boundary_bias} shows that the bias at the origin arises because the following term, corresponding to the sum of the first terms in the decomposition of \eqref{eq:il}, does not equal one:
\begin{align}
    \int_{\mathcal{O}(\mathcal{E}, \Omega)} C(y,h)\, \exp\left(-\frac{d(0,y)^2}{2h^2}\right) d\nu(y) > 1. \label{eq:bias-general}
\end{align}
This inequality holds because the normalizing constant $C(y,h)$ attains its minimum at the origin. In contrast, if we replace $C(y,h)$ by $C(0,h)$ in \eqref{eq:bias-general}, the integral evaluates to one by definition. Thus, to eliminate this form of boundary bias, we consider the following modified kernel:
\begin{align}
    K_2(x\mid X_i, h) = K_{x, h}(X_i) = C(x,h)\exp\left(-\frac{d(X_i,x)^2}{2h^2}\right).
\end{align}
That is, we switch the roles of $x$ and $X_i$ in the definition of the kernel and use the normalizing constant at $x$ rather than at $X_i$. With this choice, the expectation of the kernel-smoothed density becomes a local weighted average:
\begin{align}
    \mathbb{E}_X\!\left[ K_2(x \mid X, h) \right]
    &= \int C(x,h)\,\exp\!\left(-\frac{d(x,y)^2}{2h^2}\right) f(y)\, d\nu(y) \nonumber \\
    &= \int w_{x,h}(y)\, f(y)\, d\nu(y),
\end{align}
where $w_{x,h}(y):=C(x,h)\exp\!\left(-\tfrac{d(x,y)^2}{2h^2}\right)$ is a kernel weight that integrates to one by construction.
Similar approaches can be found in the literature on the estimation of bounded densities in Euclidean space (e.g., \cite{Chen2000-qd, Chen2000-sv}). Note that the kernel density estimator defined in this way does not necessarily integrate to one. However, as we will see, this definition succeeds in avoiding boundary bias.

\subsection{Consistency of the kernel density estimator}

Our main result is the following theorem, which establishes the consistency of the kernel density estimator $\hat{f}_{N,h}$ on $\mathcal{O}(\mathcal{E},\Omega)$ under suitable conditions.

\begin{theorem}\label{th:kde_consistency}
Let $X_1, X_2, \ldots, X_N$ be independent and identically distributed random variables from a uniformly continuous and bounded density function $f$ on $\mathcal{O}(\mathcal{E},\Omega)$. Let $h_N$ be a sequence of bandwidths satisfying $h_N \to 0$ as $N\to \infty$, along with the conditions:

\begin{enumerate}
    \item $\frac{N h_N^p}{|\log h_N|} \to \infty$,
    \item $\frac{|\log h_N|}{\log\log N} \to \infty$,
    \item There exists a constant $\check{c} > 0$ such that $h_N^p \leq \check{c} h_{2N}^p$.
\end{enumerate}

Then, for kernel $K_2(x\mid X_i,h)$, we have:

\begin{equation}
    \sup_{x \in \mathcal{O}(\mathcal{E},\Omega)} |\hat{f}_{N,h_N}(x) - f(x)| \to 0 \quad \text{almost surely as } N \to \infty.
\end{equation}
\end{theorem}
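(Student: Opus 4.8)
The plan is to split the uniform error into a stochastic (variance) term and a deterministic (bias) term via the kernel-smoothed density $f_{h_N}(x) = \mathbb{E}_X[K_2(x\mid X,h_N)] = \int w_{x,h_N}(y) f(y)\, d\nu(y)$, and to bound each separately:
\begin{align}
\sup_x |\hat f_{N,h_N}(x) - f(x)| \le \sup_x |\hat f_{N,h_N}(x) - f_{h_N}(x)| + \sup_x |f_{h_N}(x) - f(x)|. \nonumber
\end{align}
For the bias term, the key point — already engineered into the modified kernel $K_2$ — is that $w_{x,h}$ is a genuine probability weight ($\int w_{x,h}(y)\,d\nu(y)=1$), so $f_{h_N}(x) - f(x) = \int w_{x,h_N}(y)\,(f(y)-f(x))\,d\nu(y)$. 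Since $f$ is uniformly continuous, for any $\delta>0$ the contribution of $\{y : d(x,y)\le\delta\}$ is small uniformly in $x$, while the contribution of $\{y: d(x,y)>\delta\}$ is bounded by $2\|f\|_\infty \int_{d(x,y)>\delta} w_{x,h_N}(y)\,d\nu(y)$. I would show this tail mass tends to zero uniformly in $x$ as $h_N\to0$: the numerator $\int_{d(x,y)>\delta}\exp(-d(x,y)^2/2h^2)\,d\nu(y)$ decays like $e^{-\delta^2/2h^2}$ times a polynomial in $h$ (using that the space is a finite union of $p$-dimensional orthants, so volume growth is at most polynomial of degree $p$), while the normalizing constant $C(x,h)^{-1} = \int \exp(-d(x,y)^2/2h^2)\,d\nu(y)$ is bounded below uniformly by $c\,h^p$ for some $c>0$ — this lower bound holds because every point lies in some closed orthant $\bar O_F$, which contains a full $p$-dimensional cone of fixed solid angle at $x$ (the worst case being the origin, where the cone is a single orthant). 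Hence $\sup_x C(x,h_N)\int_{d(x,y)>\delta}\exp(-d^2/2h_N^2)\,d\nu(y) = O(h_N^{-p}e^{-\delta^2/2h_N^2}) \to 0$.

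For the stochastic term I would invoke the empirical-process machinery of \cite{Gine2001-be} for kernel-type classes. Write $\hat f_{N,h}(x) - f_{h}(x) = (\mathbb{P}_N - P)\,g_{x,h}$ where $g_{x,h}(y) = C(x,h)\exp(-d(x,y)^2/2h^2)$, and consider the class $\mathcal{G}_h = \{g_{x,h} : x\in\mathcal{O}(\mathcal{E},\Omega)\}$. I would verify that $\mathcal{G}_h$ (suitably rescaled) is a bounded VC-type class with envelope of order $h^{-p}$ and uniform $L^2(P)$-size of order $h^{-p}$ — the VC property follows because $d(x,y)^2$, restricted to each orthant pair, is a fixed semialgebraic function of $(x,y)$ and there are finitely many orthants, so the subgraphs form a VC class; the covering-number bound is uniform in $h$. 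Then Talagrand's inequality (or the Giné–Guillou exponential bound) together with the bandwidth conditions gives
\begin{align}
\sup_x |(\mathbb{P}_N-P)g_{x,h_N}| = O_{\text{a.s.}}\!\left(\sqrt{\frac{|\log h_N|}{N h_N^p}}\right) \to 0, \nonumber
\end{align}
where condition (1) ensures this rate vanishes, condition (2) controls the Borel–Cantelli / almost-sure passage (it lets the deviation probabilities be summable against a subsequence), and condition (3) is the standard regularity needed to interpolate between the dyadic subsequence $N=2^k$ and general $N$ via monotonicity arguments.

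I expect the main obstacle to be the geometric estimates that make the empirical-process tools applicable uniformly over the glued space: specifically (a) establishing the uniform-in-$x$ two-sided control $c_1 h^p \le C(x,h)^{-1} \le c_2 h^p$ for small $h$, which requires a clean argument that every point of $\mathcal{O}(\mathcal{E},\Omega)$ sees a $p$-dimensional cone of solid angle bounded below (uniformity over the finitely many local combinatorial types of points, and a separate check that far from the origin the local geometry is just Euclidean), and (b) proving the VC-type / uniform entropy bound for $\{d(x,\cdot)^2\}$ on the orthant space — here the geodesic distance is only piecewise-smooth across orthant boundaries, so one must argue via the finite decomposition $\mathcal{O}(\mathcal{E},\Omega) = \bigcup_{F\in\Omega_p}\bar O_F$ and the fact that the Owen–Miller geodesic-distance formula is piecewise real-analytic/semialgebraic with a bounded number of pieces, so finite-dimensionality of the ambient coordinates yields a uniform VC bound. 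Once these two geometric facts are in hand, the remainder is a routine adaptation of the Euclidean kernel-consistency proof.
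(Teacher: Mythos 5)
Your proposal follows essentially the same route as the paper: the identical bias/variance split through the smoothed density $f_{h}$, the same bias argument exploiting that $w_{x,h}$ integrates to one plus uniform continuity of $f$ and a vanishing tail mass, and the same stochastic machinery (a VC-type kernel class, Talagrand's inequality in the Gin\'e--Guillou form, dyadic blocking where condition (3) interpolates within blocks, condition (1) makes the rate $\sqrt{|\log h_N|/(Nh_N^p)}$ vanish, and condition (2) gives summability for Borel--Cantelli; the paper additionally uses the Montgomery-Smith maximal inequality to pass from the block maximum to the block endpoint). The one place you genuinely diverge is the proof that the class built from $y\mapsto d(x,y)^2/h^2$ is VC-subgraph: the paper does this by hand, writing $d(t,t')=\min_{A,B}L_{A,B}(t,t')$ via the Owen--Provan support-sequence formula, decomposing $L_{A,B}$ into the explicit pieces $f_{A,B},g_{A,B},h_{A,B}$ and bounding the VC-subgraph dimension of each class separately, whereas you invoke the finite orthant decomposition together with the semialgebraic/piecewise-analytic structure of the geodesic-distance formula and finite-dimensional parametrization. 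Your route is shorter and works (the Owen--Provan formula is indeed a minimum of finitely many semialgebraic expressions with parameters $(x,h)$, and monotone composition with $u\mapsto e^{-u^2}$ preserves the VC-subgraph property), at the cost of importing an abstract finiteness result rather than the paper's self-contained combinatorial argument; note also that if you keep $C(x,h)$ inside the class you should either absorb it as an extra free positive scale parameter or, as the paper does, factor it out using the uniform bound $c(x,h)\le M$. Finally, you actually supply more detail than the paper on the two-sided normalizing-constant bounds $c_1h^p\le C(x,h)^{-1}\le c_2h^p$ and the uniform vanishing of the tail mass; both arguments (yours and the paper's) implicitly use that every point lies in a closed top-dimensional orthant, which holds in BHV tree space.
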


The proof follows a similar argument to that in \cite{Gine2001-be} and is presented in Appendix \ref{sec:proof-kernel-consistency}. The bandwidth conditions in Theorem~\ref{th:kde_consistency} coincide with those imposed in the Euclidean case in \cite{Gine2001-be}.


\begin{remark}[Product space setting]
The uniform consistency established in Theorem~\ref{th:kde_consistency} also extends to product spaces of the form $\mathcal{O}(\mathcal{E}, \Omega) \times \mathbb{R}^m$, equipped with the product metric. In this setting, the dimension parameter $p$ appearing in the bandwidth conditions is replaced by $p+m$.
\end{remark}

\subsection{Computation and visualization of kernel density estimation}

Both the original estimator with kernel $K_1$ and the modified estimator with kernel $K_2$ require evaluating 
location-dependent normalizing constants of the form \eqref{eq:normalizing-constant}. 
Exact computation is infeasible in general, and prior work \cite{Weyenberg2017-kh} addressed this 
by replacing the integral with more tractable approximations. 
At a practical level, $K_1$ centers kernels at the sample points and therefore requires approximations of $N$ normalizing constants for a sample of size $N$,
while $K_2$ centers kernels at the evaluation points and consequently requires as many approximations as the number of query locations.

To illustrate the behavior of these estimators, we consider the Gaussian-type density $f_1$ defined in \eqref{eq:f1} and the two-component mixture density 
$f_2$ defined in \eqref{eq:f2} in the 3-spider space. 
In the 3-spider space, the normalizing constants of the kernels $K_1$ and $K_2$ can be computed using the cumulative distribution function of the standard normal distribution on $\mathbb{R}$.
To evaluate the asymptotic bias, we generate 100 independent samples of size $N=100{,}000$ and assess the performance of both the original KDE estimator and the modified KDE estimator.
Figure~\ref{fig:kde-viz} shows the estimated and true densities across the three orthants for a sample of size $N = 100{,}000$.
The original estimator $K_1$ exhibits upward bias at the origin, 
while the modified estimator $K_2$ shows a smaller downward bias.
This pattern is consistent over replicates, where the mean bias of the original KDE is $0.061$, and the mean bias of the 
modified KDE is $-0.022$. 
These results illustrate that the modified kernel achieves the intended asymptotic correction, although a finite-sample bias remains visible near the boundary.

\begin{figure}[t]
    \centering
    \begin{subfigure}{0.95\textwidth}
        \centering
        \includegraphics[width=\linewidth]{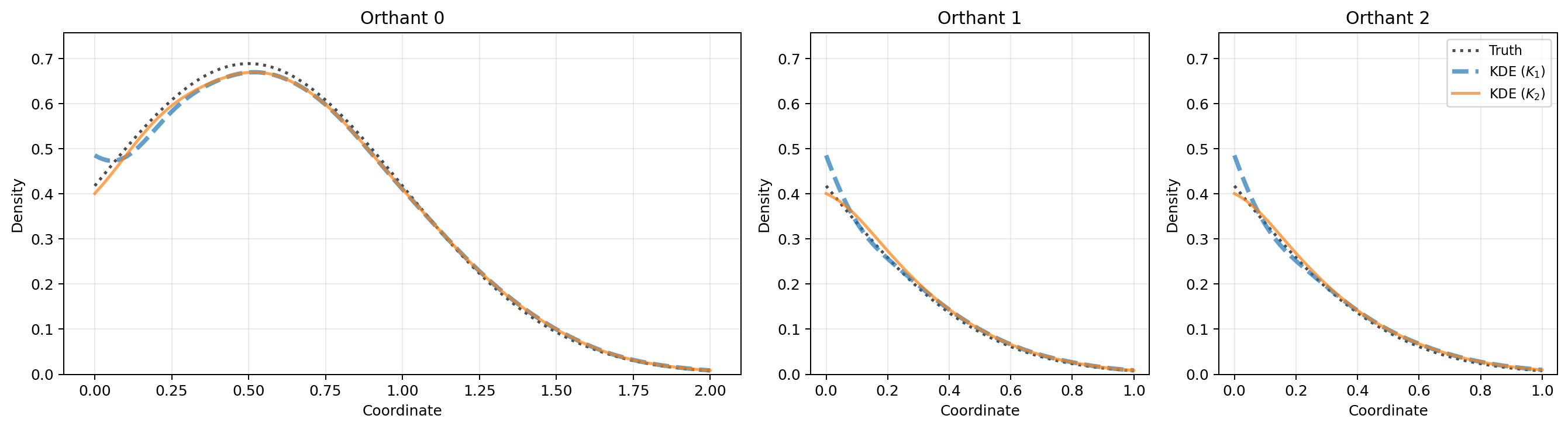}
        \caption{Gaussian-type density $f_1$.}
        \label{fig:kde-f1}
    \end{subfigure}
    \vskip\baselineskip
    \begin{subfigure}{0.95\textwidth}
        \centering
        \includegraphics[width=\linewidth]{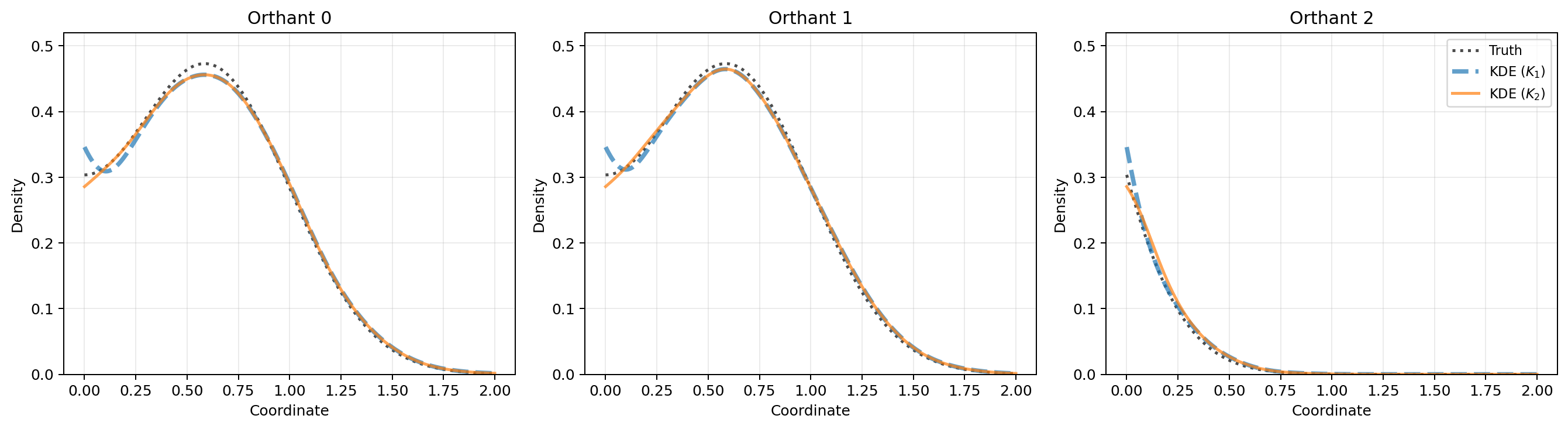}
        \caption{Mixture density $f_2$.}
        \label{fig:kde-f2}
    \end{subfigure}
    \caption{Kernel density estimation in the 3-spider space. 
    Each subfigure shows the three orthants with the true density (black dotted), 
    the original estimator with kernel $K_1$ (blue dashed), and the modified estimator with kernel $K_2$ (orange solid). 
    Panel (a) corresponds to $f_1$, and panel (b) to $f_2$.}
    \label{fig:kde-viz}
\end{figure}

\section{Conclusion}

In this paper, we examined key theoretical properties of the log-concave maximum likelihood estimator and the kernel density estimator in CAT(0) orthant spaces. For the log-concave maximum likelihood estimator, we demonstrated the existence of a log-concave projection and established the continuity of the projection map, which implies the consistency of the log-concave maximum likelihood estimator. For the kernel density estimator, we introduced a modified kernel that mitigates boundary bias, ensuring uniform consistency of the estimator.

Our analysis provided a sufficient condition for the existence of the log-concave projection map when the probability measure is either empirical (corresponding to the maximum likelihood estimator) or absolutely continuous with respect to a Lebesgue-like measure in orthant space. Prior work \cite{Takazawa2024-ed} showed that conditions analogous to those in Euclidean space do not guarantee the existence of a log-concave maximum likelihood estimator. The key challenge arises from geometric differences: even when more than $p+1$ sample points are drawn from a given density, their convex hull may retain low-dimensional components, resulting in unbounded likelihoods. Identifying the precise necessary and sufficient conditions for the existence of log-concave projections remains an open problem.

Beyond existence, the extensive literature on log-concave density estimation in Euclidean space suggests several promising directions for future research in the CAT(0) orthant space setting. In particular, extending results on estimation under additional geometric constraints (e.g., symmetry, as explored in \cite{Xu2021-lu}), analyzing the rate of convergence of log-concave maximum likelihood estimators, and investigating local continuity properties of the log-concave projection (as studied in Euclidean settings by \cite{Barber2021-os}) are of particular interest.

For kernel density estimation, the search for geometrically adapted kernel functions remains an important challenge. Alternative density formulations that better align with the geometry of orthant spaces may yield improved asymptotic properties. For example, \cite{Nye2020-xb} introduced a Gaussian-type density incorporating a geometric random walk, exhibiting a characteristic ``bend'' at the boundaries. Similarly, densities derived from the multispecies coalescent process in phylogenomics \cite{Takazawa2024-ed} share this feature. The potential advantages of such density functions warrant further investigation.

Finally, computational scalability remains a significant challenge, particularly in high-dimensional settings, as contemporary phylogenetic analyses often involve hundreds or thousands of taxa. While nonparametric density estimation in high-dimensional spaces is generally computationally prohibitive, methods such as the log-concave maximum likelihood estimator proposed in \cite{Xu2021-lu} mitigate this issue by constraining the shape of the density contours, effectively reducing the problem to a lower-dimensional one. Adapting similar techniques to orthant spaces while determining appropriate contour shapes is an intriguing direction for future research.

\begin{appendix}

\section{Proof of Results in Section 3}
\label{sec:proof-logcon}
In this section, we give a proof of the main result in Section 3.
\subsection{Lemmas}
\begin{lemma}[Generalization of \cite{Dumbgen2011-mq} Lemma 4.3]\label{lem:proj-set}
 Assume that a metric space $(\mathcal{H}, d)$ is Heine-Borel. Let $(S_n)_n$ be a sequence of nonempty closed sets in $\mathcal{H}$ such that $d(x_0, S_n) = O(1)$ for some $x_0\in\mathcal{H}$. Then, there exists a subsequence $(S_{n(k)})_k$ and a nonempty subset $S\subseteq \mathcal{H}$ such that 
    \begin{align}
        \lim_{k\to\infty} d(y, S_{n(k)}) &= d(y,S) & \forall y\in\mathcal{H}.
    \end{align}
\end{lemma}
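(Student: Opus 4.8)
The plan is to use that each function $y\mapsto d(y,S_n)$ is $1$-Lipschitz, hence the family $(d(\cdot,S_n))_n$ is uniformly equicontinuous and, since $|d(y,S_n)-d(x_0,S_n)|\le d(y,x_0)$ together with $d(x_0,S_n)=O(1)$, uniformly bounded on every bounded subset of $\mathcal{H}$. Because $\mathcal{H}$ is Heine--Borel, every closed ball $\bar B(x_0,R)$ is compact, so by the Arzel\`a--Ascoli theorem applied on $\bar B(x_0,R)$ for $R=1,2,\dots$ and a diagonal argument, I can extract a subsequence $(S_{n(k)})_k$ along which $d(\cdot,S_{n(k)})$ converges, locally uniformly, to a $1$-Lipschitz limit function $g:\mathcal{H}\to[0,\infty)$. (Equivalently, diagonalize over a countable dense subset and upgrade to everywhere-convergence via equicontinuity.) The natural candidate for the limit set is then $S:=g^{-1}(0)=\{y\in\mathcal{H}:g(y)=0\}$, which is closed since $g$ is continuous. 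What remains is to show that $S$ is nonempty and that $d(\cdot,S)=g$.

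For nonemptiness I would use the boundedness hypothesis: pick $s_k\in S_{n(k)}$ with $d(x_0,s_k)\le \sup_n d(x_0,S_n)+1=:M$, so that $(s_k)_k$ lies in the compact ball $\bar B(x_0,M)$ and, after passing to a further subsequence, $s_k\to s^\ast$. Then $d(s^\ast,S_{n(k)})\le d(s^\ast,s_k)+d(s_k,S_{n(k)})=d(s^\ast,s_k)\to 0$, while also $d(s^\ast,S_{n(k)})\to g(s^\ast)$, forcing $g(s^\ast)=0$, i.e. $s^\ast\in S$. The inequality $g(y)\le d(y,S)$ is then immediate: $g$ vanishes on $S$ and is $1$-Lipschitz, so $g(y)\le g(s)+d(y,s)=d(y,s)$ for every $s\in S$, and taking the infimum over $s\in S$ gives the bound.

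The reverse inequality $g(y)\ge d(y,S)$ is the heart of the argument. Fixing $y$, I would use the Heine--Borel property again to choose $t_k\in S_{n(k)}$ realizing $d(y,t_k)=d(y,S_{n(k)})$; since $d(y,t_k)\to g(y)$, the $t_k$ stay in a fixed compact ball, so along a further subsequence $t_k\to t^\ast$ with $d(y,t^\ast)=g(y)$, and exactly as above $d(t^\ast,S_{n(k)})\le d(t^\ast,t_k)\to 0$ gives $g(t^\ast)=0$, i.e. $t^\ast\in S$; hence $d(y,S)\le d(y,t^\ast)=g(y)$. Combining the two inequalities yields $d(\cdot,S)=g$ on all of $\mathcal{H}$. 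The step I expect to be the main obstacle is precisely this identification of $S=g^{-1}(0)$ as realizing the limiting distance function: one must manufacture genuine points of $S$ that attain the distance $g(y)$, and this is exactly where the Heine--Borel hypothesis is used beyond Arzel\`a--Ascoli, since it guarantees both that distances to the closed sets $S_{n(k)}$ are attained and that the resulting bounded sequences of minimizers have convergent subsequences. The remaining care is bookkeeping of nested subsequences, but this is harmless because $g$ (and hence $S$) is fixed once the first subsequence is chosen, so further extractions in the nonemptiness and distance-realization steps do not affect the conclusion.
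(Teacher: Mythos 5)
Your proposal is correct and follows essentially the same route as the paper's proof: exploit the uniform $1$-Lipschitz property of $y\mapsto d(y,S_n)$ to extract (via a countable dense set / Arzel\`a--Ascoli diagonal argument) a subsequence with a pointwise limit $g$, define $S$ as the zero set of $g$, and use the Heine--Borel property both to obtain projections onto the closed sets $S_{n(k)}$ and to extract convergent subsequences of these minimizers, which yields nonemptiness of $S$ and the identity $d(\cdot,S)=g$. The only cosmetic difference is that you prove nonemptiness as a separate step anchored at $x_0$, whereas the paper obtains it as a byproduct of the projection argument at a fixed $y$; the substance is identical.
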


\begin{proof}
    For an arbitrary $x,y\in \mathcal{H}$, we have
    \begin{align}
    \begin{aligned}
        &d(x, z) - d(y,z) &&\leq d(x,y) && \forall z\in S_n \nonumber\\
       & \left(\inf_{z^\prime\in S_n} d(x,z^\prime)\right) - d(y,z)&&\leq d(x,y) && \forall z \in S_n \nonumber\\
     & d(x,S_n) - \inf_{z\in S_n} d(y,z) &&\leq d(x,y) \nonumber\\
      & d(x,S_n) - d(y,S_n) &&\leq d(x,y).
    \end{aligned}
    \end{align}
    This implies that
    \begin{align}
        |d(x,S_n) - d(y,S_n)|\leq d(x,y).
    \end{align}
    This shows uniform Lipschitz continuity. Also, it implies that for any $y\in\mathcal{H}$, $d(y,S_n) = O(1)$. Therefore, for all $y\in \mathcal{H}$, $(d(y,S_n))_n$ has a convergent subsequence (Bolzano--Weierstrass). 
    Since $(\mathcal H,d)$ is Heine--Borel, it is $\sigma$-compact and hence separable; fix a countable dense set $D\subset\mathcal H$. Then there exists a subsequence $(S_{n(k)})_k$ of $S_n$ such that for every $y\in D$, $(d(y,S_{n(k)}))_k$ converges. Further, by Lipschitz continuity, the same holds for any $y\in\mathcal{H}$.

    Now fix $y\in\mathcal H$. By the Heine--Borel property, there exists a projection $x_k\in S_{n(k)}$ of $y$; that is, $d(y,x_k)=d(y,S_{n(k)})$.
 Because $(d(y,x_k))_k$ is bounded, there exists $R>0$ such that $x_k \in B_R(y)$. By the Heine--Borel property, $(x_k)_k$ has a convergent subsequence $x_{k(l)}$ to some point $x$. Then, 
    \begin{align*}
        G(x) \coloneqq \lim_{k\to\infty} d(x,S_{n(k)}) = \lim_{l\to\infty} d(x,S_{n(k(l))}) \leq \lim_{l\to\infty}d(x,x_{k(l)}) = 0.
    \end{align*}

    The set $S = \{G = 0\}$ is closed, because for any sequence $\{z_m\}\subset S$ with $z_m\to z$,
    \begin{align*}
        \lim_{k\to\infty} d(z,S_{n(k)}) \leq d(z,z_m) + \lim_{k\to \infty} d(z_m, S_{n(k)}) = d(z,z_m) \to 0 \quad (\text{as } m\to\infty).
    \end{align*}
    Now, 
    \begin{align*}
        G(y) &= \lim_{l\to\infty} d(y,S_{n(k(l))}) = \lim_{l\to\infty} d(y,x_{k(l)}) = d(y,x) \geq d(y,S).
    \end{align*}
    On the other hand, for any $x^\prime\in S$, 
    \begin{align}
         G(y) &= \lim_{k\to\infty} d(y,S_{n(k)}) \leq \lim_{k\to\infty} d(x^\prime, S_{n(k)}) + d(y,x^\prime) = d(y,x^\prime), \nonumber\\
         \therefore \quad G(y) &\leq d(y,S).
    \end{align}
    This shows that $G(y) = d(y, S)$.
\end{proof}

\begin{proposition}[\cite{Bacak2014-nn}, Proposition 2.1.16]\label{prop:intersection}
    Let $I$ be an arbitrary directed set, and let $(C_i)_{i\in I}$ be a nonincreasing family of bounded closed nonempty convex sets in a Hadamard space $\mathcal{H}$. Then, $\cap_{i\in I}C_i \neq \emptyset$.
\end{proposition}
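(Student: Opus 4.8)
The plan is to realize a point of $\bigcap_{i\in I} C_i$ as the limit of the metric projections of a fixed base point onto the sets $C_i$, controlling these projections via the Bruhat--Tits (CN) inequality that characterizes CAT(0) geometry. First I would fix $x_0\in\mathcal{H}$ and any index $i_0\in I$. Since the family is nonincreasing and $C_{i_0}$ is bounded, $C_i\subseteq C_{i_0}$ for every $i\geq i_0$, so $r_i:=d(x_0,C_i)$ satisfies $r_i\leq \sup_{y\in C_{i_0}}d(x_0,y)<\infty$ and $i\mapsto r_i$ is nondecreasing on $\{i:i\geq i_0\}$; hence $r:=\lim_i r_i$ exists and is finite. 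Because $\mathcal{H}$ is Hadamard and each $C_i$ is closed, convex, and nonempty, the nearest-point projection $p_i:=P_{C_i}(x_0)$ is well defined and unique, with $d(x_0,p_i)=r_i$.

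Next I would show that the net $(p_i)_{i\in I}$ is Cauchy. Given $i,j\in I$, use directedness to pick $k\in I$ with $k\geq i$ and $k\geq j$; then $p_i,p_j\in C_k$, and by convexity the geodesic midpoint $m$ of $p_i$ and $p_j$ also lies in $C_k$, so $d(x_0,m)\geq r_k$. Applying the CN inequality to $x_0,p_i,p_j$ gives
\begin{align}
 d(p_i,p_j)^2 \;\leq\; 2\,d(x_0,p_i)^2 + 2\,d(x_0,p_j)^2 - 4\,d(x_0,m)^2 \;\leq\; 2r_i^2 + 2r_j^2 - 4r_k^2.
\end{align}
Since $r_i,r_j\leq r_k\leq r$ and $r_i,r_j\to r$ along $I$, the right-hand side tends to $0$, so $(p_i)$ is a Cauchy net. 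As a Cauchy net in a complete metric space converges (extract a cofinal Cauchy sequence, which converges by completeness, and observe that the whole net shares that limit), we obtain $p_i\to p$ for some $p\in\mathcal{H}$.

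Finally I would check that $p$ belongs to every $C_i$: for fixed $i$, the subnet $(p_j)_{j\geq i}$ satisfies $p_j\in C_j\subseteq C_i$ and still converges to $p$, and $C_i$ is closed, so $p\in C_i$; therefore $p\in\bigcap_{i\in I}C_i$, which is nonempty. The hard part will not be any individual estimate but the bookkeeping forced by working with an arbitrary directed set rather than a sequence --- in particular, justifying that monotone bounded nets of reals converge and that Cauchy nets converge in complete spaces --- together with cleanly invoking the two standard Hadamard-space facts used above: the existence and uniqueness of nearest-point projections onto closed convex sets, and the CN inequality. An alternative route through the unique circumcenter of a suitable bounded set assembled from the $C_i$ would bypass projections but draws on essentially the same amount of CAT(0) machinery.
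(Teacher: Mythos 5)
The paper does not prove this proposition (it is imported from B\'ac\'ak, Proposition 2.1.16), so your argument must stand on its own. Your overall architecture --- project a fixed base point onto each $C_i$, show the net of projections is Cauchy via the CN inequality, pass to the limit by completeness, and use closedness to land in every $C_i$ --- is the standard and correct strategy, but the central estimate is wrong as written. Since the family is nonincreasing, $k\geq i$ and $k\geq j$ give $C_k\subseteq C_i\cap C_j$, \emph{not} the reverse, so $p_i\in C_i$ and $p_j\in C_j$ need not lie in $C_k$, and neither need their midpoint $m$; the bound $d(x_0,m)\geq r_k$ is therefore unjustified. Indeed the inequality you derive, $d(p_i,p_j)^2\leq 2r_i^2+2r_j^2-4r_k^2$, cannot be correct: because $r_k\geq\max(r_i,r_j)$ its right-hand side is $\leq 0$, so it would force all the projections to coincide, which is false in general.

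The repair is standard and preserves everything else. For \emph{comparable} indices $i\leq j$ you do have $p_j\in C_j\subseteq C_i$, so the midpoint of $p_i$ and $p_j$ lies in $C_i$ and satisfies $d(x_0,m)\geq r_i$; CN then gives $d(p_i,p_j)^2\leq 2r_i^2+2r_j^2-4r_i^2=2\bigl(r_j^2-r_i^2\bigr)$. For an arbitrary pair $i,j$ beyond an index where $r_\cdot\geq r-\delta$ (with $r=\sup_i r_i$), use directedness to pick $k\geq i,j$ and estimate $d(p_i,p_j)\leq d(p_i,p_k)+d(p_k,p_j)$, each term being at most $\sqrt{2\bigl(r^2-(r-\delta)^2\bigr)}$ by the comparable-index bound. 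With this correction your remaining steps --- a monotone bounded net of reals converges to its supremum, a Cauchy net in a complete space converges, and for each fixed $i$ the tail $(p_j)_{j\geq i}$ lies in the closed set $C_i$, so the limit $p$ lies in $\bigcap_i C_i$ --- all go through, and the proof is essentially the textbook one.
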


From this point onward we work in the CAT(0) orthant space $\mathcal{O}(\mathcal{E},\Omega)$
with its geodesic metric and reference measure $\nu$, although some results extend to more general settings. 
The next lemma shows that functions in $\Phi$ always attain a maximum.

\begin{lemma} \label{lem:sup_attain}
    For any $\phi\in\Phi$, $\sup_{x\in \mathcal{O}(\mathcal{E}, \Omega)}\phi(x)$ is attained.
\end{lemma}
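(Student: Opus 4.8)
The plan is to combine the coercivity of $\phi$ with the Heine–Borel property of the CAT(0) orthant space $\mathcal{O}(\mathcal{E},\Omega)$, which is a finite union of closed Euclidean orthants glued along faces and hence is proper (closed bounded sets are compact). First I would fix a basepoint $x_0 \in \mathcal{O}(\mathcal{E},\Omega)$ and set $M = \sup_{x} \phi(x)$. If $M = -\infty$ there is nothing to prove (the supremum is attained everywhere, or the statement is vacuous depending on convention), so assume $M > -\infty$; also note $M < \infty$ because coercivity forces $\phi$ to be bounded above on every ball, and $\phi \equiv -\infty$ outside a bounded set by coercivity, so $M$ is the supremum of $\phi$ over a compact set. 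More precisely, by coercivity there is $R > 0$ such that $\phi(x) < M - 1 \le \sup_x \phi(x) - 1$ whenever $d(x,x_0) > R$ — here I pick any finite value strictly below $M$ if $M$ is finite, using that $\phi(y) > -\infty$ for at least one $y$. Hence the supremum of $\phi$ over $\mathcal{O}(\mathcal{E},\Omega)$ equals the supremum of $\phi$ over the closed ball $\bar{B}_R(x_0)$.

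Next I would invoke compactness: $\bar{B}_R(x_0)$ is compact by the Heine–Borel property of $\mathcal{O}(\mathcal{E},\Omega)$. Since $\phi \in \Phi$ is upper-semicontinuous, and an upper-semicontinuous function on a compact set attains its supremum, there exists $x^\ast \in \bar{B}_R(x_0)$ with $\phi(x^\ast) = \sup_{x \in \bar{B}_R(x_0)} \phi(x) = \sup_{x \in \mathcal{O}(\mathcal{E},\Omega)} \phi(x)$. This is the desired maximizer.

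The one point requiring a little care — and the main (minor) obstacle — is justifying that the relevant supremum is finite and is genuinely realized on a bounded set, i.e. disentangling the two ways $\sup \phi$ could fail to be attained: escaping to infinity (ruled out by coercivity) versus a failure of upper-semicontinuity at a limit point (ruled out since $\phi$ is upper-semicontinuous and the ball is compact). I would phrase the coercivity step carefully so that it applies even if $\phi$ is not everywhere finite: choose any $y$ with $\phi(y) \in \mathbb{R}$, set $c = \phi(y)$, and use coercivity to get $R$ with $\phi(x) < c$ for $d(x,x_0) > R$; then $\sup_x \phi(x) = \sup_{\bar B_R(x_0)} \phi(x) \ge c$, and the compactness argument finishes. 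Everything else is routine; no new ideas beyond properness of orthant spaces plus the standard extreme-value principle for upper-semicontinuous functions are needed.
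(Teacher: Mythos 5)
Your proof is correct, but it takes a genuinely different route from the paper. The paper never uses compactness: it observes that the superlevel sets $\{\phi \geq \sup\phi - 1/n\}$ (and, for boundedness above on balls, the sets $\{x\in\bar B_r(x_0)\mid \phi(x)\geq k\}$) are closed (upper semicontinuity), convex (concavity of $\phi$), bounded (coercivity) and nested, and then invokes Proposition 2.1.16 of Bac\'ak — nonempty intersection of a nonincreasing family of bounded closed convex sets in a Hadamard space — to produce a maximizer. Your argument instead exploits that $\mathcal{O}(\mathcal{E},\Omega)$ is proper (finitely many Euclidean orthants, so closed balls are compact; the paper itself uses the Heine--Borel property of this space in Lemma~\ref{lem:proj-set}), reduces the supremum to a closed ball via coercivity, and applies the extreme-value principle for upper semicontinuous functions on a compact set. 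Your route is more elementary and does not use concavity of $\phi$ at all, so it proves a slightly more general statement; the paper's route is the one that survives in Hadamard spaces that are not locally compact, where closed balls need not be compact. Two small inaccuracies in your write-up, neither fatal: coercivity by itself does not force $\phi$ to be bounded above on a ball (that follows from upper semicontinuity plus compactness of the ball — or, in the paper, from the nested-convex-sets argument), and coercivity does not make $\phi \equiv -\infty$ outside a bounded set, only eventually smaller than any fixed level; your final paragraph already replaces both claims by the correct statement, and attainment does not actually require knowing $\sup\phi<\infty$ in advance, since an upper semicontinuous function on a nonempty compact set attains its supremum in $[-\infty,\infty]$.
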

\begin{proof}
$\phi$ is bounded above on bounded sets. To see this, assume $\phi$ is not bounded above on a closed ball $\bar{B}_r(x_0) = \{x\in \mathcal{O}(\mathcal{E}, \Omega)\mid d(x,x_0)\leq r\}$. Because $S_k = \{x\in \bar{B}_{r}(x_0)\mid \phi(x)\geq k\}$ is nonempty, Proposition \ref{prop:intersection} implies $\cap_{k=1}^\infty S_k \neq \emptyset $. This contradicts $\phi\in\Phi$.

    Combined with the coercivity of $\phi$, we can see that $\phi$ is bounded above globally. Thus, $\sup_{x\in \mathcal{O}(\mathcal{E}, \Omega)}\phi(x) < \infty$. Now, Proposition \ref{prop:intersection} shows that $\cap_{n=1}^\infty \{x \in\mathcal{O}(\mathcal{E}, \Omega) \mid \phi(x) \geq \sup_{y\in \mathcal{O}(\mathcal{E}, \Omega)}\phi(y) - 1/n \} \neq \emptyset$, and thus a maximizer exists.
\end{proof}

Any function in $\Phi$ has a majorizing ``linear'' function.
\begin{lemma}[Generalization of \cite{Cule2010-uy}, Lemma 1]\label{lem:majorize_func}
        For any $\phi\in\Phi$ and $x_0\in \dom \phi$, there exists $a>0, b\in\mathbb{R}$ such that $\phi(x)\leq -ad(x,x_0) + b$.
\end{lemma}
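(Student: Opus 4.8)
The plan is to turn the coercivity of $\phi$ into a genuine linear-in-distance decay by travelling along the geodesics emanating from $x_0$. First I would record two elementary facts: $M:=\sup_{x\in\mathcal{O}(\mathcal{E},\Omega)}\phi(x)$ is finite, which is exactly Lemma~\ref{lem:sup_attain}, and $\phi(x_0)\in\mathbb{R}$, since $x_0\in\dom\phi$. Applying coercivity with base point $x_0$ and threshold $\phi(x_0)-1$, I then fix a radius $r_0>0$ such that $\phi(y)\le\phi(x_0)-1$ for every $y$ with $d(y,x_0)\ge r_0$.

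The core step is a one-dimensional concavity estimate along a geodesic. Fix $x$ with $R:=d(x,x_0)\ge r_0$; if $\phi(x)=-\infty$ the asserted bound is automatic, so assume $\phi(x)\in\mathbb{R}$. Let $\gamma\colon[0,1]\to\mathcal{O}(\mathcal{E},\Omega)$ be the unique constant-speed geodesic with $\gamma(0)=x_0$ and $\gamma(1)=x$, and set $y:=\gamma(r_0/R)$, so $d(y,x_0)=r_0$. Since $\phi$ is concave, $t\mapsto\phi(\gamma(t))$ is concave on $[0,1]$, hence
\[
\phi(y)\ \ge\ \bigl(1-\tfrac{r_0}{R}\bigr)\phi(x_0)+\tfrac{r_0}{R}\,\phi(x).
\]
Combining with $\phi(y)\le\phi(x_0)-1$ and solving for $\phi(x)$ yields $\phi(x)\le\phi(x_0)-R/r_0=\phi(x_0)-d(x,x_0)/r_0$. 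Setting $a:=1/r_0>0$ and $b:=M+1$, this covers every $x$ with $d(x,x_0)\ge r_0$ because $\phi(x_0)\le M$, whereas for $d(x,x_0)<r_0$ one simply uses $\phi(x)\le M=b-1<b-a\,d(x,x_0)$. Hence $\phi(x)\le-a\,d(x,x_0)+b$ for all $x$.

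I do not expect a deep obstacle; the two points that deserve care are as follows. First, the implication ``$\phi$ concave $\Rightarrow t\mapsto\phi(\gamma(t))$ concave on $[0,1]$'' should be deduced from the definition of concavity (convexity of the hypograph $\hypo\phi\subseteq\mathcal{O}(\mathcal{E},\Omega)\times\mathbb{R}$): the geodesic joining $(x_0,\phi(x_0))$ and $(x,\phi(x))$ in the product space is $t\mapsto\bigl(\gamma(t),(1-t)\phi(x_0)+t\,\phi(x)\bigr)$, and its membership in $\hypo\phi$ is precisely the displayed inequality. Second, the value $-\infty$ must be tracked: the case $\phi(x)=-\infty$ is immediate, and $\phi(y)=-\infty$ cannot occur when $\phi(x)$ is finite, since the concavity inequality would then bound $-\infty$ from below by a finite quantity. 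The rest is elementary arithmetic with the constants $a$ and $b$.
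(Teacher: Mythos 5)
Your proof is correct and follows essentially the same route as the paper: coercivity at $x_0$ supplies a radius at which $\phi$ has dropped by $1$, and concavity along the geodesic from $x_0$ (the paper phrases this as monotonicity of the difference quotient $t\mapsto\{\phi(\gamma(t))-\phi(x_0)\}/d(\gamma(t),x_0)$, you apply the same inequality directly at the point at distance $r_0$) converts this drop into linear decay, with the same constants $a=1/r_0$ and $b=\sup\phi+1$ handling the bounded region via Lemma~\ref{lem:sup_attain}. Your care with the hypograph definition of concavity and the $-\infty$ cases is consistent with the paper's conventions.
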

\begin{proof}
    Because $\phi$ is coercive, there exists $R>0$ such that if $d(x,x_0)\geq R$ then $\phi(x)-\phi(x_0) \leq -1$.

    Take $x$ with $d(x,x_0)>R$. We have 
    \begin{align}
    \frac{\phi(x)}{ d(x,x_0) } = \frac{ \phi(x) - \phi(x_0) }{ d(x,x_0) } + \frac{\phi(x_0)}{d(x,x_0)} .\label{eq:lem_upperbound1}
    \end{align}
    
    Now, for $\lambda\in(0,1)$, let $y=\gamma_{x_0, x}(\lambda)$. Then, by concavity,
    \begin{align}
        \phi(y) &\geq \lambda \phi(x) + (1-\lambda) \phi(x_0) \nonumber  \\
        \phi(y) - \phi(x_0) &\geq \lambda(\phi(x) - \phi(x_0)) \nonumber \\
        \frac{\phi(y) - \phi(x_0)}{\lambda d(x,x_0)} &\geq \frac{\phi(x) - \phi(x_0)}{d(x,x_0)} \nonumber \\  \therefore  \frac{\phi(y) - \phi(x_0)}{d(y,x_0)} &\geq \frac{\phi(x) - \phi(x_0)}{d(x,x_0)} .
    \end{align}

    Therefore, the function $t\mapsto \sup_{x: d(x,x_0) = t}\{\phi(x)-\phi(x_0)\}/t$ is nonincreasing. This implies 
    \begin{align}
        \frac{\phi(x)-\phi(x_0)}{d(x,x_0)} \leq \sup_{y: d(y,x_0)=R} \frac{\phi(y)-\phi(x_0)}{R}\leq -\frac{1}{R}. \label{eq:lem_upperbound2}
    \end{align}

    The equations \eqref{eq:lem_upperbound1} and \eqref{eq:lem_upperbound2} imply
    \begin{align}
        \phi(x) \leq -\frac{1}{R} d(x,x_0) + \phi(x_0) && \forall x : d(x,x_0) \geq R.
    \end{align}

    For the rest, use Lemma \ref{lem:sup_attain} and let $a=1/R, b = \sup_{x\in \mathcal{O}(\mathcal{E}, \Omega)}(\phi(x)) + 1$.
\end{proof}

\begin{lemma}[Generalization of Lemma 2.1 of \cite{Dumbgen2010-uj}]\label{lem:closedconvex_nu}
Assume that $Q$ has a density with respect to $\nu$. Then, for any positive orthant $O_F$ with $Q(O_F)> 0$,
\[
\lim_{\delta\downarrow 0}\ \sup\{\,Q(C\cap O_F)\mid C\subseteq \mathcal{O}(\mathcal{E},\Omega)\ \text{closed and convex},\ \nu(C)\le \delta\,\}
\;<\; Q(O_F).
\]
\end{lemma}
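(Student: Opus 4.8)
The plan is to reduce the statement to the standard $\epsilon$--$\delta$ formulation of absolute continuity, which applies here because $\nu$ is $\sigma$-finite (a finite sum of Lebesgue measures on the top-dimensional orthants, then completed) and $Q$ is a finite measure with a $\nu$-density $q$. The first step is to record the elementary fact that for every $\epsilon > 0$ there is $\delta > 0$ such that $\nu(A)\le\delta$ forces $Q(A)\le\epsilon$ for every Borel set $A$. I would prove this by truncation: since $\int q\,d\nu = 1 < \infty$, monotone convergence gives $\int_{\{q>M\}} q\,d\nu \to 0$ as $M\to\infty$, so fix $M$ with $\int_{\{q>M\}} q\,d\nu < \epsilon/2$ and set $\delta = \epsilon/(2M)$; then for $\nu(A)\le\delta$ one has $Q(A) = \int_{A\cap\{q\le M\}} q\,d\nu + \int_{A\cap\{q>M\}} q\,d\nu \le M\nu(A) + \epsilon/2 \le \epsilon$.

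The second step is to apply this with $A = C\cap O_F$. The only input about $C$ that is used is the trivial monotonicity $\nu(C\cap O_F)\le\nu(C)\le\delta$, together with the measurability of $C\cap O_F$ (automatic, $C$ being closed and $O_F$ open). Hence $Q(C\cap O_F)\le\epsilon$ holds simultaneously for every closed convex $C$ with $\nu(C)\le\delta$, so $\sup\{Q(C\cap O_F)\mid C\ \text{closed convex},\ \nu(C)\le\delta\}\le\epsilon$ whenever $\delta\le\delta(\epsilon)$. Since this supremum is nondecreasing in $\delta$, the limit as $\delta\downarrow 0$ exists and equals $0$; as $Q(O_F)>0$ by hypothesis, this is strictly less than $Q(O_F)$, which is the claimed inequality (in fact a slightly stronger conclusion).

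I do not anticipate a genuine obstacle for this lemma. Given the density hypothesis, the convexity of $C$ retained from Lemma 2.1 of \cite{Dumbgen2010-uj} is not essential here: absolute continuity alone forces the supremum to vanish, so the CAT(0)/orthant geometry enters only through the trivial remark $\nu(C\cap O_F)\le\nu(C)$. The only points needing (minor) care are the $\sigma$-finiteness of $\nu$, which legitimizes the truncation and the underlying Radon--Nikodym representation, and the measurability and monotonicity observations above; in effect the lemma repackages the $\epsilon$--$\delta$ characterization of $Q\ll\nu$, specialized to the family of sets that enter the existence argument of Theorem~\ref{th:existence}.
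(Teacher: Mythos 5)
Your proof is correct, and it actually delivers a stronger conclusion than the lemma asserts: since $Q$ is a finite measure with a $\nu$-density, the truncation argument gives the uniform $\epsilon$--$\delta$ form of absolute continuity, so the supremum taken over \emph{all} measurable sets of $\nu$-measure at most $\delta$ (convexity and closedness play no role) tends to $0$, which is strictly below $Q(O_F)>0$. This is, however, a genuinely different route from the paper's. The paper argues geometrically: it selects $p+1$ affinely independent points of $\mathrm{supp}(Q)\cap O_F$, shrinks their convex hull to a simplex $\tilde{\Delta}$ with $\nu(\tilde{\Delta})>0$, builds corner simplices around the original points, and shows that any closed convex $C$ with $Q(C\cap O_F)>Q(O_F)-\eta$ must meet every corner simplex and hence contain $\tilde{\Delta}$, forcing $\nu(C\cap O_F)\ge\nu(\tilde{\Delta})$. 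The reason for this heavier machinery is not this lemma itself but its companion, Lemma~\ref{lem:closeconvex_nu_weak}, where the same construction is repeated for a sequence $Q_n$ converging weakly to $Q$ (in the applications, empirical measures) that need not be absolutely continuous with respect to $\nu$; there your absolute-continuity shortcut is unavailable and the convexity of $C$, together with the corner-ball masses transferred to $Q_n$ via the portmanteau theorem, is what makes the bound uniform in $n$. So your argument is a clean and fully valid proof of the stated lemma, and it suffices for its use in Theorem~\ref{th:existence}, but it does not extend to the weak-convergence version needed in Theorem~\ref{th:continuity}, which is where the paper's geometric formulation pays off.
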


\begin{proof}
First, let $S_F = O_{F} \cap \supp(Q)$. We can take $(p+1)$ points  $x_0, \ldots, x_p \in S_F $ such that $\Delta = \conv (x_0, \ldots, x_p)$ has nonempty interior. For simplicity, we regard these points as points in Euclidean space. Take $z\in \inter(\Delta)$ and for $\alpha\in (0,1/2) $, let $\tilde{x}_i = (1-\alpha) x_i + \alpha z $ and 
\begin{align}\tilde{\Delta} = \conv( \tilde{x_0}, \ldots, \tilde{x_p} ).\end{align}

Define the $k$th corner simplex of $\tilde{\Delta}$ by:
\begin{align} \Delta_k = \{2\tilde{x}_k - x \mid x\in \tilde{\Delta}, 2\tilde{x}_k - x \in \mathbb{R}^p_{\geq 0}\}. \end{align}

Then, by the arguments in Euclidean space and using that $x_k$ is in the positive orthant, we have $x_k\in\inter (\Delta_k) $.

Now, by taking $\epsilon>0$ sufficiently small, the open ball $B_\epsilon (x_k)$ can be contained in $\inter( \Delta_k )$. 
Then, by the minimality of support, $Q(B_\epsilon (x_k)) > 0$. Let $\eta \coloneqq \min_{k=0,\ldots, p} Q( \Delta_k ) > 0$.

Any closed convex set $C$ with $Q(C\cap O_F) > Q(O_F)-\eta$ must intersect all $\Delta_k$. Thus, we can take $x_k^\ast \in \Delta_k\cap C$. Now, $\Delta^\ast =  \conv ( x_0^\ast, \ldots, x_p^\ast )$ contains $\tilde{\Delta}$. Therefore, 
\begin{align}
    \nu(C\cap O_F) \geq \nu(\Delta^\ast) \geq \nu( \tilde{\Delta} )> 0.
\end{align}

This shows that any closed convex set $C$ with $\nu(C\cap O_F) < \nu(\tilde{\Delta})$ has $Q(C\cap O_F) \leq Q(O_F)-\eta$.
\end{proof}

A similar property can be proved for a sequence of measures $Q_n$ that converges weakly to $Q$.

\begin{lemma}\label{lem:closeconvex_nu_weak}
    Assume that a probability measure $Q$ has a density with respect to $\nu$ and a sequence of probability measures $Q_n$ converges weakly to $Q$. Then, for any positive orthant $O_F$ with $Q(O_F) > 0$, there exist $N\in\mathbb{N}$ and $\eta, \delta>0$ such that
    \[\sup\{Q_n(C\cap O_F)\mid C\subseteq \mathcal{O}(\mathcal{E},\Omega): \text{ closed and convex }, \nu(C)\leq \delta, n\geq N\} \leq Q(O_F) - \eta.\]
\end{lemma}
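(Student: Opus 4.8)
The plan is to rerun the proof of Lemma~\ref{lem:closedconvex_nu} almost verbatim, but to reorganize it so that the sequence $(Q_n)_n$ enters only through \emph{finitely many fixed open balls}, to which the Portmanteau theorem can be applied one at a time. All the convex geometry is done once, for $Q$ alone; weak convergence is then invoked only at the very end.

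\textbf{Step 1 (the $n$-free construction).} Since $Q(O_F)>0$, necessarily $F\in\Omega_p$ and $\supp(Q)\cap O_F$ has nonempty interior in $\mathbb{R}^p$. Exactly as in Lemma~\ref{lem:closedconvex_nu}, pick $p+1$ affinely independent points $x_0,\dots,x_p\in\supp(Q)\cap O_F$, build the shrunken simplex $\tilde\Delta\subseteq O_F$ (with $\lambda_p(\tilde\Delta)>0$) and the corner simplices $\Delta_0,\dots,\Delta_p\subseteq\bar O_F$ with $x_k\in\inter(\Delta_k)$, and fix $\epsilon>0$ so small that $B_\epsilon(x_k)\subseteq\inter(\Delta_k)\cap O_F$ for every $k$; by minimality of the support, $q_k:=Q(B_\epsilon(x_k))>0$. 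Put $\delta:=\tfrac12\lambda_p(\tilde\Delta)>0$ and $\eta:=\tfrac12\min_{0\le k\le p}q_k>0$. The convex-geometry argument of Lemma~\ref{lem:closedconvex_nu} then gives the key \emph{geometric} fact, valid for \emph{all} closed convex $C$: since $O_F\cap\bar O_G=\emptyset$ for $G\in\Omega_p\setminus\{F\}$ we have $\nu(C\cap O_F)=\lambda_p(C\cap O_F)\le\nu(C)$, so $\nu(C)\le\delta$ forces $\lambda_p(C\cap O_F)<\lambda_p(\tilde\Delta)$, whence $C$ must miss some $\Delta_k$, hence miss $B_\epsilon(x_k)$, hence $C\cap O_F\subseteq\bar O_F\setminus B_\epsilon(x_k)$ for some $k\in\{0,\dots,p\}$.

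\textbf{Step 2 (uniform passage to $Q_n$).} For each of the finitely many $k$, the set $\bar O_F\setminus B_\epsilon(x_k)$ is closed, so by the Portmanteau theorem
\[
\limsup_{n\to\infty}Q_n\bigl(\bar O_F\setminus B_\epsilon(x_k)\bigr)\le Q\bigl(\bar O_F\setminus B_\epsilon(x_k)\bigr)=Q(O_F)-q_k\le Q(O_F)-2\eta,
\]
where we used $B_\epsilon(x_k)\subseteq\bar O_F$ and $Q(\bar O_F)=Q(O_F)$, the latter because $Q\ll\nu$ and $\bar O_F\setminus O_F$ is $\nu$-null. Hence there is $N_k$ with $Q_n(\bar O_F\setminus B_\epsilon(x_k))\le Q(O_F)-\eta$ for all $n\ge N_k$; set $N:=\max_k N_k$. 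Combining with Step~1: for every $n\ge N$ and every closed convex $C$ with $\nu(C)\le\delta$ there is $k$ with $C\cap O_F\subseteq\bar O_F\setminus B_\epsilon(x_k)$, so $Q_n(C\cap O_F)\le Q_n(\bar O_F\setminus B_\epsilon(x_k))\le Q(O_F)-\eta$. Taking the supremum over such $C$ and over $n\ge N$ gives the assertion.

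\textbf{Expected main obstacle.} There is no deep difficulty here; the content is entirely bookkeeping to make the bound uniform in both $C$ and $n$. The one point that must be handled with care is the order of quantifiers: the balls $B_\epsilon(x_k)$ and the thresholds $\delta,\eta$ must be pinned down \emph{before} weak convergence is invoked, so that the threshold index $N$ depends only on the finitely many $k$'s and not on the competing set $C$. The auxiliary facts $Q(\bar O_F)=Q(O_F)$ (null boundary, from $Q\ll\nu$) and $O_F\cap\bar O_G=\emptyset$ for distinct $F,G\in\Omega_p$ are the only inputs beyond what already appears in the proof of Lemma~\ref{lem:closedconvex_nu}.
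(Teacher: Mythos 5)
Your proposal is correct and follows essentially the same route as the paper: the identical simplex/corner-simplex construction from Lemma~\ref{lem:closedconvex_nu} fixes finitely many balls $B_\epsilon(x_k)$, $\delta$, and $\eta$ before weak convergence enters, and the portmanteau theorem is then applied to finitely many fixed sets. The only (cosmetic) difference is that you apply the closed-set portmanteau bound to $\bar O_F\setminus B_\epsilon(x_k)$ in one step, whereas the paper uses the open-set bound $\liminf_n Q_n(B_\epsilon(x_k))\ge Q(B_\epsilon(x_k))$ together with $Q_n(O_F)\to Q(O_F)$.
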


\begin{proof}
As in the proof of Lemma \ref{lem:closedconvex_nu}, there exists a simplex with positive $\nu$-measure $\tilde{\Delta} \subseteq O_F$, a positive number $\xi>0$, and a set of open balls with $Q$-measure greater than $\xi$, $B_\epsilon(x_k) \subseteq O_F ~(k=0, \ldots, p)$, such that for any closed convex set $C$ with $C\cap B_{\epsilon}(x_k)\neq \emptyset$ for all $k$, $\tilde{\Delta}\subseteq C$. 

On the other hand, by the weak convergence, $\liminf_{n\to\infty} Q_n(B_{\epsilon}(x_k)) \geq Q(B_\epsilon(x_k))$ and $\lim_{n\to\infty} Q_n(O_F) = \lim_{n\to\infty}Q_n(\bar{O}_F) = Q(O_F)$. In particular, for any positive number $\tau < \xi/2$, there exists $N$ such that if $n\geq N$, $Q_n(B_{\epsilon}(x_k)) > \xi - \tau$ for all $k$ and $Q_n(O_F)<Q(O_F) + \tau$.

If $\nu(C)<\nu(\tilde{\Delta})\eqqcolon \delta$, since $\tilde{\Delta}\not\subseteq C$, there exists $k$ such that $C\cap B_\epsilon(x_k) = \emptyset$. Thus, if $n\geq N$, $Q_n(C\cap O_F) \leq Q_n(O_F)-\min_{k=0,\ldots,p}Q_n(B_\epsilon(x_k)) < Q(O_F) - \xi + 2\tau$. Taking $\eta=\xi-2\tau$ concludes the proof.
\end{proof}

For $\phi\in\Phi$ and $t\in\mathbb{R}$, let $D_t^\phi = \{x\in \mathcal{O}(\mathcal{E},\Omega)\mid \phi(x)\geq t\}$. 
\begin{lemma}\label{lem:level_nu}
    Assume that $\phi\in\Phi$ satisfies $\int e^\phi d\nu = 1$. Then, for $r<M\leq \max_{x\in O_F}\phi(x)$,
    \begin{align}\nu(D_r^\phi\cap O_F) \leq \frac{(M-r)^p e^{-M}}{\int_{0}^{M-r} t^p e^{-t} dt }.\end{align}
\end{lemma}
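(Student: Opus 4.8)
The plan is to reduce the inequality to a one-orthant Euclidean estimate and then extract it from the normalization $\int e^{\phi}\,d\nu = 1$ via a radial scaling argument centred at a maximizer of $\phi$ on $\bar{O}_F$. If $|F|<p$ the left-hand side is $0$ and there is nothing to prove, so assume $F\in\Omega_p$. The first step is the reduction: I record that the closed orthant $\bar{O}_F$ is a convex subset of $\mathcal{O}(\mathcal{E},\Omega)$ which, with the induced metric, is isometric to the Euclidean orthant $\mathbb{R}^p_{\geq 0}$; consequently geodesics between two of its points are ordinary straight segments, $\phi$ restricted to $\bar{O}_F$ is concave in the usual Euclidean sense, and $\nu$ coincides with $\lambda_p$ on $\bar{O}_F$ (the faces shared with other maximal orthants being $\lambda_p$-null). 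Writing $C := D_r^{\phi}\cap\bar{O}_F$, we then have $\nu(D_r^{\phi}\cap O_F) = \lambda_p(C)$; the set $C$ is closed, convex, and bounded (by coercivity of $\phi$). I fix $x^{*}\in\bar{O}_F$ with $\phi(x^{*})\ge M$ (such a point exists since $\max_{x\in O_F}\phi(x)\ge M$ and, by coercivity and upper semicontinuity as in Lemma~\ref{lem:sup_attain}, the supremum of $\phi$ over $\bar{O}_F$ is attained); note $x^{*}\in C$ and $\phi\le\sup\phi<\infty$.

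The core step is a scaling estimate on super-level sets. For $s\in[0,1]$ and $x\in C$, the point $(1-s)x^{*}+sx$ is the geodesic from $x^{*}$ to $x$ inside $\bar{O}_F$, so concavity of $\phi$ gives $\phi((1-s)x^{*}+sx)\ge(1-s)\phi(x^{*})+s\phi(x)\ge M-s(M-r)$. Hence the affine image $(1-s)x^{*}+sC$, which has $\lambda_p$-measure $s^{p}\lambda_p(C)$, is contained in $D^{\phi}_{M-s(M-r)}\cap\bar{O}_F$; equivalently, for $t\in[r,M]$, with $s=(M-t)/(M-r)$, $\lambda_p(D_t^{\phi}\cap\bar{O}_F)\ge\bigl((M-t)/(M-r)\bigr)^{p}\lambda_p(C)$.

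Then I integrate against $e^{\phi}$. Since $e^{\phi}\ge0$ and $\int_{\mathcal{O}(\mathcal{E},\Omega)}e^{\phi}\,d\nu=1$ with $\nu=\lambda_p$ on $\bar{O}_F$, we get $1\ge\int_C e^{\phi}\,d\lambda_p$. Expanding $e^{\phi(y)}=e^{r}+\int_{r}^{\infty}e^{t}\mathbf{1}\{\phi(y)\ge t\}\,dt$ for $y\in C$ (valid since $\phi(y)\in[r,\infty)$ there and $\phi$ is bounded above) and applying Tonelli's theorem, $\int_C e^{\phi}\,d\lambda_p=e^{r}\lambda_p(C)+\int_{r}^{\infty}e^{t}\lambda_p(D_t^{\phi}\cap\bar{O}_F)\,dt$. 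Dropping the first term, restricting the integral to $[r,M]$, inserting the scaling bound from the previous step, and substituting $u=M-t$ yields $1\ge\frac{e^{M}}{(M-r)^{p}}\bigl(\int_{0}^{M-r}u^{p}e^{-u}\,du\bigr)\lambda_p(C)$, which rearranges to the claimed inequality.

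The one part to get right is the reduction step: restriction to a single maximal orthant must turn geodesics/concavity, super-level sets, and the reference measure all into their plain Euclidean analogues (convexity of $\bar{O}_F$ inside $\mathcal{O}(\mathcal{E},\Omega)$, and $\nu=\lambda_p$ there), after which the remaining steps are routine monotonicity and Tonelli manipulations. A minor point is that the hypothesis ``$M\le\max_{x\in O_F}\phi(x)$'' is used only through the existence of $x^{*}\in\bar{O}_F$ with $\phi(x^{*})\ge M$, so no attainment on the open orthant $O_F$ is actually required.
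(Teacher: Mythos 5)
Your proof is correct and matches the paper's intent: the paper disposes of this lemma by simply citing Lemma 4.1 of D\"umbgen et al.\ (the Euclidean level-set bound), and your argument is exactly the standard proof of that Euclidean lemma --- the scaling inclusion $(1-s)x^{*}+sC\subseteq D^{\phi}_{M-s(M-r)}$ plus the layer-cake/Tonelli computation --- together with the single-orthant reduction (convexity of $\bar O_F$, straight-segment geodesics, $\nu=\lambda_p$ there) that makes the citation rigorous. No gaps.
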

\begin{proof}
    This is essentially Lemma 4.1 of \cite{Dumbgen2010-uj}.
\end{proof}

\begin{lemma}[Generalization of Lemma 4.2 of \cite{Dumbgen2010-uj}]
   Suppose that $\bar{\phi}, \phi_1,\phi_2,\ldots\in\Phi$ satisfy $\phi_n\leq \bar{\phi}$. Also, assume that $C=\{x\in \mathcal{O}(\mathcal{E}, \Omega)\mid \liminf_{n\to\infty}\phi_n(x)>-\infty\}$ is nonempty.
   
   Then, a subsequence $(\phi_{n(k)})_k$ and a function $\phi\in\Phi$ exist with $C\subseteq\dom\phi$ and
   \begin{align}
       \lim_{k\to\infty, x\to y}\phi_{n(k)}(x) &= \phi(y) & \text{for all }y\in\inter(\dom(\phi))\cap (\cup_{F\in\Omega_p}O_F), \\
       \limsup_{k\to\infty,x\to y}\phi_{n(k)}(x) &\leq \phi(y) \leq \bar{\phi}(y) & \text{for all }y\in \mathcal{O}(\mathcal{E},\Omega).
   \end{align}\label{lem:limit_conv}
\end{lemma}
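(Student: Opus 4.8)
The plan is to lift everything to hypographs. Work in $\mathcal{H} = \mathcal{O}(\mathcal{E},\Omega)\times\mathbb{R}$, which is Heine--Borel, and for each $n$ put $\hypo(\phi_n)=\{(x,t)\in\mathcal{H}\mid t\le\phi_n(x)\}$; this is closed (by upper semicontinuity of $\phi_n$) and convex (by concavity of $\phi_n$). Picking $x_\ast\in C$ and $c=\liminf_n\phi_n(x_\ast)>-\infty$, we get $\phi_n(x_\ast)>c-1$ for all large $n$, so $(x_\ast,c-1)\in\hypo(\phi_n)$ eventually; hence, discarding finitely many terms, $(\hypo(\phi_n))_n$ is a sequence of nonempty closed sets with $d((x_\ast,c-1),\hypo(\phi_n))=0$ for large $n$. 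Lemma~\ref{lem:proj-set} then yields a subsequence $(n(k))_k$ and a nonempty closed set $S\subseteq\mathcal{H}$ with $d((y,t),\hypo(\phi_{n(k)}))\to d((y,t),S)$ for every $(y,t)\in\mathcal{H}$. I will define $\phi(y):=\sup\{t\mid(y,t)\in S\}$ (with $\sup\emptyset=-\infty$) and verify that $S=\hypo(\phi)$, that $\phi\in\Phi$ with $\phi\le\bar\phi$ and $C\subseteq\dom\phi$, and that $\phi$ satisfies the two convergence statements.

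\emph{Structural properties of $S$ and $\phi$.} First, $S$ is convex: for $p,q\in S$ with nearest points $p_k,q_k\in\hypo(\phi_{n(k)})$, the geodesic $\gamma_{p_k,q_k}$ lies in $\hypo(\phi_{n(k)})$, and Lemma~\ref{lem:joint_convexity_of_geodesic} gives $d(\gamma_{p,q}(\lambda),\gamma_{p_k,q_k}(\lambda))\to 0$, so $\gamma_{p,q}(\lambda)\in S$ for all $\lambda$. Second, $S$ is downward closed in the $\mathbb{R}$-coordinate, since each $\hypo(\phi_{n(k)})$ is. Third, for $(y,t)\in S$ approximated by $(y_k,t_k)\in\hypo(\phi_{n(k)})$ with $(y_k,t_k)\to(y,t)$, we have $t_k\le\phi_{n(k)}(y_k)\le\bar\phi(y_k)$, so $t\le\bar\phi(y)$ by upper semicontinuity of $\bar\phi$; hence $\phi\le\bar\phi$, so $\phi<\infty$. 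Together with closedness, these facts force $\{t\mid(y,t)\in S\}=(-\infty,\phi(y)]$ (or $\emptyset$), i.e.\ $S=\hypo(\phi)$, so $\phi$ is concave and upper semicontinuous, and it is coercive because $\bar\phi$ is; thus $\phi\in\Phi$. Finally, for $y\in C$ with $c_y=\liminf_n\phi_n(y)>-\infty$, $(y,c_y-1)\in\hypo(\phi_{n(k)})$ eventually, so $(y,c_y-1)\in S$ and $\phi(y)\ge c_y-1>-\infty$; hence $C\subseteq\dom\phi$.

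\emph{Convergence statements.} The upper bound $\limsup_{k\to\infty,\,x\to y}\phi_{n(k)}(x)\le\phi(y)$ (which also yields $\phi(y)\le\bar\phi(y)$) follows by contradiction: if $x_k\to y$ with $\phi_{n(k)}(x_k)\ge\phi(y)+\epsilon$ (or $\ge -M$ when $\phi(y)=-\infty$), then $(x_k,\phi(y)+\epsilon)\in\hypo(\phi_{n(k)})$ converges to $(y,\phi(y)+\epsilon)\in S$, contradicting the definition of $\phi(y)$. The lower bound on $\inter(\dom\phi)\cap\bigcup_{F\in\Omega_p}O_F$ is the crux. Fix such a $y$ and $\epsilon>0$. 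A neighbourhood of $y$ in $\mathcal{O}(\mathcal{E},\Omega)$ is isometric to an open Euclidean ball, on which $\phi$ is finite and Euclidean-concave, hence continuous at $y$; so one can pick a non-degenerate $p$-simplex $\Delta=\conv(z_0,\dots,z_p)\subseteq\inter(\dom\phi)\cap O_F$ with $y\in\inter\Delta$ and $|\phi(z_j)-\phi(y)|<\epsilon/4$ for all $j$. Since $(z_j,\phi(z_j))\in S$, there are $(w_j^k,s_j^k)\in\hypo(\phi_{n(k)})$ with $(w_j^k,s_j^k)\to(z_j,\phi(z_j))$; for $k$ large the $w_j^k$ lie in $O_F$, span a non-degenerate Euclidean simplex in $\bar{O}_F$ whose interior contains a fixed ball $B_\delta(y)$, and satisfy $|s_j^k-\phi(z_j)|<\epsilon/4$. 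Then every $x\in B_\delta(y)$ is a Euclidean convex combination $x=\sum_j\lambda_j w_j^k$ inside $\bar{O}_F$, and Euclidean-concavity of $\phi_{n(k)}$ on $\bar{O}_F$ gives $\phi_{n(k)}(x)\ge\sum_j\lambda_j\phi_{n(k)}(w_j^k)\ge\sum_j\lambda_j s_j^k>\phi(y)-\epsilon/2$ for all large $k$, uniformly in $x\in B_\delta(y)$. Combined with the upper bound, this gives $\lim_{k\to\infty,\,x\to y}\phi_{n(k)}(x)=\phi(y)$.

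The step I expect to be the main obstacle is this last one: one must (i) use the local Euclidean structure of $\mathcal{O}(\mathcal{E},\Omega)$ at points of the top-dimensional open orthants so that convex combinations and Jensen's inequality behave classically, (ii) argue that the approximating vertices $w_j^k$ coming from the hypograph limit eventually enclose a ball of fixed radius around $y$, and (iii) crucially invoke continuity of the concave limit $\phi$ at interior points of its domain --- the orthant-space analogue of the continuity of finite concave functions on open Euclidean sets, cf.\ Lemma~9 of \cite{Takazawa2024-ed} --- which is the reason the simplex must be taken small around $y$. The remaining pieces (convexity, downward closedness, and closedness of $S$; the bound $\phi\le\bar\phi$ and coercivity; and the upper-semicontinuity-type convergence) are routine given Lemma~\ref{lem:proj-set} and the joint geodesic convexity of the distance (Lemma~\ref{lem:joint_convexity_of_geodesic}).
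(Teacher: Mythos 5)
Your proposal is correct and takes essentially the same route as the paper's proof: lift to hypographs, apply Lemma~\ref{lem:proj-set} to obtain the limit set $S$, identify $S$ as the hypograph of some $\phi\in\Phi$ dominated by $\bar\phi$, obtain the $\limsup$ bound from membership in $S$, and prove the two-sided limit at points of $\inter(\dom\phi)\cap\bigcup_{F\in\Omega_p}O_F$ via approximating simplex vertices and Euclidean concavity within a maximal orthant. The only cosmetic differences are that you verify $\phi\le\bar\phi$ through approximating sequences rather than the paper's hypograph-distance comparison, and you fix $\epsilon$ and choose a small simplex using continuity of $\phi$ at $y$, whereas the paper uses a $\delta$-simplex and lets $\delta\to 0$ at the end.
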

   Here, $\limsup_{k\to\infty, x\to y}f_{n(k)}(x) \coloneqq \sup \{\limsup_{k\to\infty} f_{n(k)}(y_k) \mid y_k\to y \}$. Thus, in particular, for all $y\in\inter(\dom(\phi))\cap (\cup_{F\in\Omega_p} O_F)$, 
   \begin{align}
       \lim_{k\to\infty}\phi_{n(k)}(y) &= \phi(y).
   \end{align}

\begin{proof}
    Let $G_n((x,t)) = d((x,t), \hypo(\phi_n))$. We may, without loss of generality, assume that there exists $x_0 \in \mathcal{O}(\mathcal{E},\Omega)$ and $t_0\in\mathbb{R}$ such that $(x_0, t_0)\in \hypo(\phi_n)~~(\forall n)$. Thus, $G_n((x_0,t_0)) = 0~~(\forall n)$. Then, Lemma \ref{lem:proj-set} indicates that there exists a subsequence $(G_{n(k)})_k$ and a closed set $S\subseteq \mathcal{O}(\mathcal{E},\Omega)\times\mathbb{R}$ such that for any $(x,t)\in \mathcal{O}(\mathcal{E},\Omega)\times\mathbb{R}$, $\lim_{k\to\infty} G_{n(k)}((x,t)) = d((x,t), S)\eqqcolon G((x,t))$. Here, $S = \{ (x,t)\in \mathcal{O}(\mathcal{E},\Omega)\times\mathbb{R}\mid \lim_{k\to\infty} G_{n(k)}((x,t)) = 0 \}$.

    Now, the set $S$ is convex and satisfies $(x,t)\in S, s<t \Rightarrow (x,s)\in S$. Note that the convexity of $S$ follows from the CAT(0) property. Thus, $S$ is a hypograph of an upper-semicontinuous concave function. We define $\phi$ to be this function.

    We first see that for any $y\in \mathcal{O}(\mathcal{E},\Omega)$, $\limsup_{k\to\infty, x\to y}\phi_{n(k)}(x) \leq \phi(y)$. Let $\limsup_{k\to\infty, x\to y}\phi_{n(k)}(x) = t$. Observe that there exists a further subsequence $\phi_{n(k(l))}$ with $\sup\{ \phi_{n(k(l))}(x) \mid d(x,y)< 1/l \} \to_{l\to\infty} t$. Then, 
    \begin{align}&\limsup_{l\to\infty} d((y,t), \hypo(\phi_{n(k(l))})) \nonumber\\
    \leq &\limsup_{l\to\infty} |t - \sup\{ \phi_{n(k(l))}(x) \mid d(x,y)< 1/l \}| + \frac{1}{l} = 0.
    \end{align}
    This implies $\lim_{k\to\infty} G_{n(k)}((y,t)) = 0$ as well, leading to $(y,t)\in S$. This shows $\phi(y)\geq t$. In particular, $C\subseteq \dom(\phi)$ holds.
    
    Now, since $d((x,t), \hypo(\phi_{n(k)})) \geq d((x,t),\hypo(\bar{\phi}))$ holds for any $x, t$ and $k$, $d((x,t),\hypo(\phi)) \geq d((x,t),\hypo(\bar{\phi}))$ follows. This 
    means that $\phi\leq \bar{\phi}$.

    For simplicity, we retake $(\phi_{n(k)})_k$ as $(\phi_n)_n$. 


    There exist $F\in\Omega_p$ and $y\in\inter(\dom(\phi))\cap O_F$ such that for sufficiently small $\delta>0$ and $b_0,\ldots,b_p \in \mathbb{R}^p$, $\conv(y + \delta b_0, \ldots, y + \delta b_p)\subseteq \dom(\phi)\cap O_F$, $y\in\inter(\conv(y + \delta b_0, \ldots, y + \delta b_p))$. Now, $\lim_{n\to\infty}G_n((y + \delta b_j, \phi(y + \delta b_j))) = G((y + \delta b_j, \phi(y + \delta b_j))) = 0$ indicates that for any $\epsilon>0$, there exists some $N\in\mathbb{N}$ such that
    \[n\geq N \Rightarrow \forall j, \exists x_{n,j} \in \mathcal{O}(\mathcal{E},\Omega), d(y + \delta b_j, x_{n,j}) < \epsilon \text{ and } \phi_n(x_{n,j}) > \phi(y + \delta b_j) - \epsilon.\]
    This means that there exists some sequence $\{x_{n,j}\}_n$ such that $\liminf_{n\to\infty} \phi_n(x_{n,j}) \geq \phi(y + \delta b_j)$, $x_{n,j}\to y + \delta b_j$.

    Now, if we let $\Delta_n = \conv(x_{n,0},\ldots, x_{n,p})$, it contains a ball around $y$ eventually. Therefore, 
    \begin{align}
        \liminf_{n\to\infty,x\to y}\phi_n(x) &= \inf \{\liminf_{n\to\infty} \phi_n(y_n) \mid y_n \to y\}  \nonumber \\
        &\geq \liminf_{n\to\infty}\left[ \inf_{x\in\Delta_n} \phi_n(x)\right] \nonumber \\
        &= \liminf_{n\to\infty} \min_{x\in\Delta_n}\phi_n(x) \nonumber \\
        &= \liminf_{n\to\infty} \min_{j=0,\ldots,p} \phi_n(x_{n,j}) \nonumber \\
        &\geq \min_{j=0,\ldots,p}\phi(y + \delta b_j).
    \end{align}
    By letting $\delta\to 0$, the right-hand side converges to $\phi(y)$ by continuity.
\end{proof}

\begin{lemma}[Generalization of Lemma 2.13 of \cite{Dumbgen2010-uj}]\label{lem:Q_n_h}
    Assume that $Q$ has a density, and $(Q_n)_n$ converges weakly to $Q$. Also, let $\inter(\supp(Q))\cap \cup_{F\in\Omega_p} O_F\neq \emptyset$. Let the functional $h$ be as defined in \eqref{eq:def_of_h}.
    
    Then, for any $x\in\inter(\csupp(Q))\cap \cup_{F\in\Omega_p} O_F$, \[\limsup_{n\to\infty} h(Q_n,x) \leq h(Q,x) < 1.\]
\end{lemma}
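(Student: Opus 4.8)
The plan is to first dispose of the bound $h(Q,x)<1$ and then prove the upper-semicontinuity statement $\limsup_{n\to\infty}h(Q_n,x)\le h(Q,x)$. The bound $h(Q,x)<1$ is immediate from Lemma~\ref{lem:Q_h}, since $x\in\inter(\csupp(Q))\cap\cup_{F\in\Omega_p}O_F$ (and this set is nonempty by hypothesis, as $\inter(\supp(Q))\subseteq\inter(\csupp(Q))$). For the main inequality I argue by contradiction: if it fails, then after passing to a subsequence there is $c>h(Q,x)$ with $h(Q_n,x)\to c$, and for each $n$ a closed convex set $C_n$ with $x\notin\inter(C_n)$ and $Q_n(C_n)>h(Q_n,x)-1/n$, so $Q_n(C_n)\to c$. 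The goal is to extract from the $C_n$ a closed convex set $C$ with $x\notin\inter(C)$ and $Q(C)\ge c$, which contradicts the definition of $h(Q,x)$ as a supremum.

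To extract $C$: the space $\mathcal{O}(\mathcal{E},\Omega)$ is proper (closed bounded sets are compact, being a finite union of the closed orthants $\bar O_F$, each proper), hence Polish, so by Prokhorov's theorem the weakly convergent family $\{Q_n\}\cup\{Q\}$ is uniformly tight. Choosing $R$ with $\inf_n Q_n(\bar B_R(x_0))>1-c/2$, the fact that $Q_n(C_n)\to c$ forces $C_n\cap\bar B_R(x_0)\ne\emptyset$ for large $n$, so $d(x_0,C_n)=O(1)$. Lemma~\ref{lem:proj-set} (valid here since the space is Heine--Borel) then gives a subsequence $(C_{n(k)})_k$ and a nonempty closed set $C$ with $d(y,C_{n(k)})\to d(y,C)$ for every $y$. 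Convexity of $C$ follows from the joint convexity of the distance function (Lemma~\ref{lem:joint_convexity_of_geodesic}): for $p,q\in C$ pick $p_k,q_k\in C_{n(k)}$ with $p_k\to p$ and $q_k\to q$; the geodesic $\gamma_{p_k,q_k}$ lies in $C_{n(k)}$ and $d(\gamma_{p_k,q_k}(t),\gamma_{p,q}(t))\le(1-t)d(p_k,p)+t\,d(q_k,q)\to 0$, so $d(\gamma_{p,q}(t),C_{n(k)})\to 0$ and hence $\gamma_{p,q}(t)\in C$.

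The main obstacle will be showing $x\notin\inter(C)$, and I would do this using the local flat structure at $x$. Since $x$ lies in the open top-dimensional orthant $O_F$, a ball $B_{\delta_0}(x)$ is isometric to a flat Euclidean ball in $\mathbb{R}^p$; fix coordinates there with $x$ at the origin. We may assume (passing to a subsequence) that $d(x,C_{n(k)})<\delta_0/4$ for all $k$, for otherwise $d(x,C)>0$ and we are done. Then each $\tilde C_k:=C_{n(k)}\cap\bar B_{\delta_0/2}(x)$ is a nonempty closed convex subset of this flat ball with $x\notin\inter(\tilde C_k)$, so the supporting/separating hyperplane theorem in $\mathbb{R}^p$ yields a unit vector $u_k$ with $\langle u_k,y\rangle\le 0$ for all $y\in\tilde C_k$ (coordinates centered at $x$). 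Passing to a further subsequence, $u_k\to u$ with $|u|=1$. For any $y\in C$ with $d(x,y)<\delta_0/2$, choose $y_k\in C_{n(k)}$ with $y_k\to y$; then $y_k\in\tilde C_k$ eventually, so $\langle u_k,y_k\rangle\le 0$ and hence $\langle u,y\rangle\le 0$ in the limit. If $x\in\inter(C)$, then $\bar B_\rho(x)\subseteq C$ for some $\rho<\delta_0/2$, but the point with coordinates $\rho u$ lies in $\bar B_\rho(x)\subseteq C$ and satisfies $\langle u,\rho u\rangle=\rho>0$, a contradiction. Hence $x\notin\inter(C)$.

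It remains to show $Q(C)\ge c$. Fix $\epsilon>0$ and pick $R_\epsilon$ with $\sup_n Q_n(\mathcal{O}(\mathcal{E},\Omega)\setminus\bar B_{R_\epsilon}(x_0))<\epsilon$. A compactness argument inside $\bar B_{R_\epsilon}(x_0)$ shows that $C_{n(k)}\cap\bar B_{R_\epsilon}(x_0)\subseteq C^\epsilon$ for all large $k$, where $C^\epsilon:=\{y:d(y,C)\le\epsilon\}$: a sequence $y_k\in C_{n(k)}\cap\bar B_{R_\epsilon}(x_0)$ with $d(y_k,C)>\epsilon$ would, by compactness, have a limit point $y^\ast$ with $d(y^\ast,C)\ge\epsilon$ while $d(y^\ast,C)=\lim_k d(y^\ast,C_{n(k)})=0$. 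Hence $Q_{n(k)}(C_{n(k)})\le Q_{n(k)}(C^\epsilon)+\epsilon$ for large $k$, and since $C^\epsilon$ is closed the portmanteau theorem gives $c=\limsup_k Q_{n(k)}(C_{n(k)})\le Q(C^\epsilon)+\epsilon$. Letting $\epsilon\downarrow 0$ and using $\bigcap_{\epsilon>0}C^\epsilon=C$ (as $C$ is closed) yields $Q(C)\ge c$. Then $h(Q,x)\ge Q(C)\ge c>h(Q,x)$, the desired contradiction.
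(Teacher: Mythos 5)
Your proposal is correct and takes essentially the same route as the paper's proof: argue by contradiction, use tightness of the weakly convergent sequence to keep the near-optimal convex sets at bounded distance, extract a limit set $C$ via Lemma~\ref{lem:proj-set}, verify that $C$ is closed and convex with $x\notin\inter(C)$, and obtain $Q(C)\ge c$ from $\delta$-enlargements together with the portmanteau theorem, contradicting the definition of $h(Q,x)$ (with $h(Q,x)<1$ supplied by Lemma~\ref{lem:Q_h} in both cases). The only substantive difference is cosmetic: you prove $x\notin\inter(C)$ by an explicit separating-hyperplane argument in the flat Euclidean chart around $x\in O_F$, whereas the paper deduces it from uniform convergence of $d(\cdot,S_{n(k)})$ on a small ball around $x$; both arguments are valid.
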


\begin{proof}
Let $\mathcal{G}(x) = \{C : \text{closed and convex }, x\notin \inter(C)\}$. To derive a contradiction, assume \(\limsup_{n\to\infty} h(Q_n,x) \ge h(Q,x)+\eta\) for some \(\eta>0\).
Passing to a subsequence (not relabeled), we may assume \(h(Q_n,x)\to h(Q,x)+\eta\). Then, there exists a sequence of closed convex sets $(S_n)_n$ such that $S_n\in \mathcal{G}(x)$ and $\lim Q_n(S_n) = h(Q,x)+\eta$. 

Since $Q$ is tight, we can take a point $y\in \mathcal{O}(\mathcal{E},\Omega)$ and an open ball $B_{R}(y)$ such that $Q(B_{R}(y)) > 1- h(Q,x)$. (Note that $0<h(Q,x)$ always holds for a measure $Q$ that has a density.) Now, by weak convergence, $\liminf Q_n(B_R(y))\geq Q(B_R(y)) > 1-h(Q,x)$. This means that for sufficiently large $n$, $S_n\cap B_R(y)\neq \emptyset$. Thus, $d(y,S_n) = O(1)$.
Applying Lemma 
 \ref{lem:proj-set}, there exists a subsequence $(S_{n(k)})_k$ and a nonempty closed set $S$ such that for any $z$, $\lim_{k\to\infty} d(z,S_{n(k)}) = d(z,S)$. In particular, this $S$ is closed and convex. 

Now, define the $\delta$-enlargement of $S$ by $S^{(\delta)} = \{z\in \mathcal{O}(\mathcal{E},\Omega) \mid d(z,S)\leq \delta\}$. Then, this is closed and convex. Also, by the weak convergence, 
    \begin{align}
        Q(S^{(\delta)}) &\geq \limsup_{k\to\infty} Q_{n(k)}(S^{(\delta)})\nonumber \\
        &\geq \limsup_{k\to\infty} Q_{n(k)}(S^{(\delta)} \cap S_{n(k)}) \nonumber \\
        &= \lim Q_{n(k)}(S_{n(k)}) - \liminf Q_{n(k)}(S_{n(k)} - S^{(\delta)}) \nonumber \\
        &\geq h(Q,x)+\eta - \limsup_{k\to\infty}  Q_{n(k)}\left(\{ z\in \mathcal{O}(\mathcal{E},\Omega)\mid d(z, S_{n(k)}) = 0 \text{ and } d(z,S) > \delta \}\right). \label{eq:Q-S_delta}
    \end{align}

    Because $d(\cdot, T)$ is Lipschitz continuous for any closed set $T$, $d(z,S_{n(k)})$ converges uniformly to $d(z, S)$ on compact sets. In addition, because $(Q_{n})_n$ converges weakly to a tight measure $Q$, they are tight. Therefore, for any $\epsilon>0$, we can take a compact set $K$ such that for any $n$, $Q_n(K) > 1-\epsilon$. Now, by the uniform convergence on $K$, there exists $L\in\mathbb{N}$ such that if $k\geq L$ and $z\in K$, $d(z,S) < d(z,S_{n(k)}) + \delta/2$. Thus, \[\limsup_{k\to\infty}  Q_{n(k)}\left(\{ z\in \mathcal{O}(\mathcal{E},\Omega)\mid d(z, S_{n(k)}) = 0 \text{ and } d(z,S) > \delta \}\right) < \epsilon.\] 
    Because this holds for any $\epsilon>0$, the right-hand side of the equation \eqref{eq:Q-S_delta} equals $h(Q,x)+\eta$. However, the left-hand side converges to $Q(S)$ as $\delta\to 0$, leading to $Q(S)=h(Q,x)+\eta$. Note that $S$ cannot contain $x$ in its interior, because if it does, there exists a closed ball $\bar{B}_\epsilon(x)$ that is contained in $S\cap \cup_{F\in\Omega_p} O_F$. But the uniform convergence implies that for sufficiently large $k$, $\sup_{z\in\bar{B}_\epsilon(x)} d(z,S_{n(k)}) < \epsilon/2$. This contradicts the construction of $S_n$. Therefore, $S$ is a closed convex set without $x$ in its interior and with $Q$-measure $h(Q,x)+\eta$. This is a contradiction.
\end{proof}

\begin{lemma}\label{lem:lipschitz-majorization}
    For $\psi\in\Phi$, define
    \begin{align}\psi^{(M)}(x) = \sup_{y\in\mathcal{O}(\mathcal{E},\Omega)} \{\psi(y) - Md(x,y)\}.\end{align}
    Then, the following hold:
    \begin{itemize}
        \item $\psi^{(M)}$ is $M$-Lipschitz
        \item $\psi^{(M)}\in\Phi$
         \item $\psi^{(M)}(x)\searrow \psi(x)$ pointwise as $M\to\infty$.
    \end{itemize}
\end{lemma}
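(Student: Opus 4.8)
The plan is to verify the three claims in turn, viewing $\psi^{(M)}$ as the geodesic-space analogue of the Pasch--Hausdorff (Lipschitz) regularization; we may assume $\dom\psi\neq\emptyset$, the case $\psi\equiv-\infty$ being trivial. First I would record that $\psi^{(M)}$ is real-valued: taking any $y\in\dom\psi$ in the defining supremum gives $\psi^{(M)}(x)\ge\psi(y)-Md(x,y)>-\infty$, while fixing $x_0\in\dom\psi$ and invoking Lemma~\ref{lem:majorize_func} to get $a>0$, $b\in\mathbb{R}$ with $\psi(y)\le-ad(y,x_0)+b$ yields $\psi(y)-Md(x,y)\le b$ for all $y$, hence $\psi^{(M)}(x)\le b<\infty$. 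The $M$-Lipschitz bound is then immediate: given points $x,x'$, for any $y$ the triangle inequality gives $\psi(y)-Md(x,y)\le\psi(y)-Md(x',y)+Md(x,x')$, and taking the supremum over $y$ and then exchanging $x$ and $x'$ gives $|\psi^{(M)}(x)-\psi^{(M)}(x')|\le Md(x,x')$; in particular $\psi^{(M)}$ is continuous, hence upper semicontinuous.

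For concavity I would argue directly along geodesics rather than appeal to any ``supremum of concave functions'' principle, which would be false here. Fix $x_0,x_1$, set $\gamma=\gamma_{x_0,x_1}$, fix $\lambda\in[0,1]$ and $\epsilon>0$, and pick $y_0,y_1$ --- necessarily in $\dom\psi$ --- with $\psi(y_i)-Md(x_i,y_i)\ge\psi^{(M)}(x_i)-\epsilon$. Writing $\eta=\gamma_{y_0,y_1}$, concavity of $\psi$ gives $\psi(\eta(\lambda))\ge(1-\lambda)\psi(y_0)+\lambda\psi(y_1)$, while the joint convexity of the distance (Lemma~\ref{lem:joint_convexity_of_geodesic}, applied to the two geodesics $\gamma$ and $\eta$) gives $d(\gamma(\lambda),\eta(\lambda))\le(1-\lambda)d(x_0,y_0)+\lambda d(x_1,y_1)$. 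Using $\eta(\lambda)$ as a test point in the supremum defining $\psi^{(M)}(\gamma(\lambda))$ and combining these bounds,
\[
\psi^{(M)}(\gamma(\lambda))\ge(1-\lambda)\bigl(\psi(y_0)-Md(x_0,y_0)\bigr)+\lambda\bigl(\psi(y_1)-Md(x_1,y_1)\bigr)\ge(1-\lambda)\psi^{(M)}(x_0)+\lambda\psi^{(M)}(x_1)-\epsilon,
\]
and letting $\epsilon\downarrow0$ proves concavity. For coercivity I would split the supremum at a radius $T$: on $\{d(y,x_0)\le T\}$, using $\psi\le b$ and $d(x,y)\ge d(x,x_0)-T$, the contribution is at most $b+MT-Md(x,x_0)$; on $\{d(y,x_0)>T\}$, the linear majorant gives $\psi(y)-Md(x,y)\le\psi(y)\le-aT+b$. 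Choosing $T$ large and then $d(x,x_0)$ large forces $\psi^{(M)}(x)\to-\infty$, so $\psi^{(M)}\in\Phi$.

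Finally, for the monotone pointwise convergence: since $d\ge0$, the map $M\mapsto\psi^{(M)}(x)$ is nonincreasing, and taking $y=x$ in the supremum gives $\psi^{(M)}(x)\ge\psi(x)$, so $\ell\coloneqq\lim_{M\to\infty}\psi^{(M)}(x)$ exists and is $\ge\psi(x)$. If $\ell>\psi(x)$, fix a real $c$ with $\psi(x)<c<\ell$; then for each $M$ there is $y_M$ with $\psi(y_M)-Md(x,y_M)>c$, so $\psi(y_M)>c$ and $Md(x,y_M)\le\psi(y_M)-c\le b-c$, whence $d(x,y_M)\to0$, i.e.\ $y_M\to x$. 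Upper semicontinuity of $\psi$ then gives $\psi(x)\ge\limsup_{M\to\infty}\psi(y_M)\ge c>\psi(x)$, a contradiction; hence $\ell=\psi(x)$ and $\psi^{(M)}(x)\searrow\psi(x)$.

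I expect the only genuine (and still minor) obstacle to be the concavity step: the naive ``sup of concave functions'' reasoning fails, so one must replace the missing vector-space sup-convolution by the geodesic argument above and keep track of the fact that the near-optimal points $y_i$ lie in $\dom\psi$ so that the convex-combination bound for $\psi$ is meaningful. The coercivity argument also requires the small care of splitting the supremum, since for large $M$ the linear majorant alone does not control the far part.
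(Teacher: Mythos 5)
Your proof is correct and follows essentially the same route as the paper's: the triangle inequality for the Lipschitz bound, concavity via a geodesic between near-optimal points combined with Lemma~\ref{lem:joint_convexity_of_geodesic}, coercivity by splitting the supremum at a finite radius, and upper semicontinuity for the pointwise monotone limit. The only cosmetic differences are that you invoke the linear majorant of Lemma~\ref{lem:majorize_func} where the paper uses coercivity of $\psi$ and its global upper bound directly, and you phrase the convergence step as a sequential contradiction rather than the paper's direct $\epsilon$--$\delta$ estimate (your version in fact handles points with $\psi(x)=-\infty$ slightly more cleanly).
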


\begin{proof}
       To see the first claim, take any two points $x,y$. Then, at $x$, for any $\epsilon>0$, there exists $z$ with
    \begin{align}
        \psi^{(M)}(x) < \psi(z) - Md(x,z) + \epsilon.
    \end{align}
    Thus, 
    \begin{align}
        \psi^{(M)}(y) &\geq \psi(z) - Md(y,z) \nonumber \\
        &\geq \psi(z) - M(d(x,y) + d(x,z)) \nonumber \\
        & > \psi^{(M)}(x) - \epsilon - Md(x,y). 
    \end{align}
     By letting $\epsilon \searrow 0$, $\psi^{(M)}(y) \geq \psi^{(M)}(x) - Md(x,y)$. The converse can be proven similarly.

     For the second claim, we first verify its concavity. For any $x_0, x_1, y_0, y_1 \in \mathcal{O}(\mathcal{E},\Omega)$, let $x_t = (1-t) x_0 + tx_1, y_t = (1-t) y_0 + ty_1$. Then,
     \begin{align}
        \psi^{(M)}(x_t) &\geq \psi(y_t) - Md(x_t, y_t) \nonumber \\
        &\geq (1-t) \psi(y_0) + t\psi(y_1) - M(1-t)d(x_0,y_0) - Mtd(x_1, y_1) &  \nonumber \\
        &= (1-t) \{ \psi(y_0) - Md(x_0,y_0) \} + t\{ \psi(y_1) - Md(x_1,y_1) \}.
    \end{align}

    Because $y_0,y_1$ are arbitrary, by taking the supremum over them, 
     \begin{align}
        \psi^{(M)}(x_t) \geq (1-t) \psi^{(M)}(x_0) + t \psi^{(M)}(x_1).
    \end{align}

    Now we verify its coercivity. Because $\psi$ is coercive, for any $x_0$ and $b\in\mathbb{R}$, there exists $R>0$ such that if $d(x,x_0)\geq R$, $\psi(x) < b$. Also, $\sup \psi(x) = \sup \psi^{(M)}(x)$ and both are bounded. Let the bound be $L$. If we let $T = R + (L+|b|)/M$, for $x$, $y$ with $d(x,x_0) > T$ and $d(x_0,y)< R$, the triangle inequality implies $d(x,y)>(L+|b|)/M$, and
     \begin{align}
         \psi(y) - Md(x,y) < L - M \left(\frac{L+|b|}{M}\right) = -|b| \leq b.
     \end{align}
Therefore, if $d(x_0,x)>T$, 
      \begin{align}
         \psi^{(M)}(x) \leq b.
     \end{align}
     This shows the coercivity.

     For the last claim, take an arbitrary $x\in \mathcal{O}(\mathcal{E},\Omega)$. By the upper semicontinuity, for any $\epsilon>0$, there exists $\delta >0$ such that if $d(x,y)\leq \delta$, $\psi(x) > \psi(y) - \epsilon$. 
     Here, because $\psi$ is bounded above by $L$, if $M > (L - \psi(x))/\delta> 0$, for $y$ with $d(x,y)\geq \delta$, 
     \begin{align}
         \psi(y) - Md(x,y) &\leq \psi(y) - M\delta \nonumber \\
         &< \psi(y) - (L - \psi(x)) \nonumber \\
         &= \psi(x) - (L-\psi(y)) \leq \psi(x).
     \end{align}
     Thus, 
     \begin{align}
         \psi^{(M)}(x) &= \sup_{y: d(x,y)<\delta}\{ \psi(y) - Md(x,y) \} \nonumber \\
         &\leq \sup_{y: d(x,y)<\delta}\psi(y) < \psi(x) + \epsilon.
     \end{align}

     We have now shown all three claims.
\end{proof}

\subsection{Proof of main results}
\begin{proof}[Proof of Lemma \ref{lem:Q_h}]
Take $x\in\inter({\csupp(Q)})\cap \cup_{F\in\Omega_p}O_F$ and let
\begin{align}
    \mathcal{G}(x) = \{C : \text{closed and convex }, x\notin \inter(C)\}.
\end{align}
For arbitrary $C\in\mathcal{G}(x)$, $Q(C)<1$ is obvious from the minimality of convex support.

Assume $h(Q,x)=1$, i.e., there exists a sequence of closed convex sets $(S_N)_N\subseteq \mathcal{G}(x)$ and
$Q(S_N) \nearrow 1$.
Here, if we let $T_N = S_N\cap \csupp(Q)$, $(T_N)_N\subseteq \mathcal{G}(x)$ and $Q(T_N)\nearrow 1$. Now, we define functions $\psi_N$ and $\bar{\psi}$ as follows:
\begin{alignat}{2}
    \psi_N(x) &= \log \mathbf{1}\{x\in T_N\} &&\coloneqq \begin{cases} 0 & \text{if }x\in T_N \\ -\infty & \text{otherwise} \end{cases}, \\
    \bar{\psi}(x) &= \log \mathbf{1}\{x\in \csupp(Q)\} &&\coloneqq \begin{cases} 0 & \text{if }x\in \csupp(Q) \\ -\infty & \text{otherwise}. \end{cases}
\end{alignat}
By definition, $\psi_N \leq \bar{\psi}$. 

Because $Q$ has a density, there exists a maximal orthant $O_F$ in which $\supp(Q)$ is not contained in any hyperplane. Thus, we can take $(p+1)$ points $y_0, \ldots, y_p \in \supp(Q)\cap O_F$ such that its convex hull has nonempty interior. Let one of the interior points be $y$. Then, any subspace of $O_F$ given as  $H(u,y) = \{x\in O_F \mid u^\top y \leq u^\top x \}$ for some $u\in\mathbb{R}^p\backslash\{0\}$ separates points $y_0,\ldots,y_p$ into two parts. This shows that $Q(H(u,y))$ is positive for each unit vector $u$. By continuity of the integral, $\inf_{\|u\|=1} Q(H(u,y))$ is attained, and hence positive, say equals $c>0$. Then, any closed convex set that has a probability greater than $1-c$ must contain $y$ in its interior. This implies that $y$ is eventually in $T_N$, thus $\liminf_{N\to\infty} \psi_N(y) = 0$.

Now we can use Lemma \ref{lem:limit_conv}. There exists a subsequence $(\psi_{N(k)})_k$ of $(\psi_N)_N$ and a function $\psi\in\Phi$ such that
\begin{align}
    \lim_{k\to\infty, z\to y} \psi_{N(k)}(z) &= \psi(y) & \forall y \in \inter(\dom(\psi))\cap \cup_{F\in\Omega_p}
O_F,  \\   \limsup_{k\to\infty, z\to y} \psi_{N(k)}(z) &\leq \psi(y) \leq \bar{\psi}(y) & \forall y.
\end{align}
Now, by the form of the function $\psi_N$, for any $z\in\mathcal{O}(\mathcal{E},\Omega)$, $\limsup_{y\to z}\psi_N(y) = \psi_N(z)$. Note that by the convexity of $T_N$, if $T_N$ is not a singleton set, it has some element of itself in an arbitrary neighborhood of any element of it. Further, singleton sets can be excluded because $Q$ has a density. Because $T_N$ is closed,
\begin{align}
    0 &= 1 - \lim_{k\to\infty}\int \exp(\psi_{N(k)}) dQ & (\text{ Definition of }\psi_N) \nonumber\\
    &\geq \int 1- \liminf_{k\to \infty}\exp(\psi_{N(k)})dQ & (\text{Fatou's lemma}) \nonumber \\
    &= 1 - \int \limsup_{k\to\infty} \limsup_{y\to z}\exp(\psi_{N(k)}(y)) dQ(z) \nonumber \\
    &\geq 1 - \int \exp(\psi) dQ  \nonumber \\
    &\geq 1 - \int\exp(\bar{\psi})dQ= 0.
\end{align}

Here, $x\not\in \inter(\dom\psi)\cap (\cup_{F\in\Omega_p} O_F)$. This is because, if we assume $x\in\inter(\dom\psi)\cap (\cup_{F\in\Omega_p} O_F)$, $\lim_{k\to\infty, y\to x}\psi_{N(k)}(y) = 0$. However, by the construction of $\psi_N$, this implies the existence of some $K_0\in\mathbb{N}$ and $\epsilon_k>0$ such that if $k\geq K_0$, $B_{\epsilon_k}(x) \subseteq T_{N(k)}$. This contradicts the assumption that $x\not\in\inter(T_{N(k)})$.

Combining this with the observations that $\psi\leq\bar{\psi}$,  we can see that $\int\exp(\psi) dQ=1$ and $\psi\in\Phi$. This implies that $\inter(\dom\psi) = \inter(\csupp(Q))$. However, it contradicts $x\not\in\inter(\dom\psi)\cap (\cup_{F\in\Omega_p} O_F)$. 
\end{proof}
\begin{proof}[Proof of Theorem \ref{th:existence}]
    We first show that $L(Q)\in\mathbb{R}$.

    First, let $h(x) = -d(x,x_0)$. Then, 
     \[L(h,Q) = -\int d(x,x_0)dQ - \int e^{-d(x,x_0)} d\nu(x) + 1 > -\infty.\]
    This implies $L(Q)>-\infty$.

    We now restrict attention to the class of concave functions $\Phi(Q)$ defined as follows:
    \begin{align}
        \Phi(Q) = \{\phi\in\Phi \mid \int e^\phi d\nu = 1, \dom\phi\subseteq\csupp(Q)\}.
    \end{align}
    Note that these conditions are not restrictive, since we are only interested in the supremum of $L(\phi, Q)$.

    Let $\Theta = \{F\in\Omega_p\mid Q(O_F)>0\}$. Note that $\Theta$ is nonempty because $Q$ has a density. For $F\in\Theta$, let $M_F=\max_{x\in O_F}\phi(x)$. By Lemma \ref{lem:level_nu}, when $M_F>0$, we have 
    \begin{align}&\nu(D_{-cM_F}^\phi\cap O_F) \leq \frac{(1+c)^pM_F^p e^{-M_F}}{\int_0^{(1+c)M_F} t^p e^{-t}dt}, \label{eq:levelset_nu} \end{align}
    where $D_{t}^\phi = \{x\in\mathcal{O}(\mathcal{E},\Omega)\mid \phi(x)\geq t\}$ as before.
    As $M_F \to \infty$, since the denominator approaches $p!$, $\nu(D_{-cM_F}^\phi\cap O_F)$ approaches $0$.
    Now, 
    \begin{align}
        L(\phi,Q) &= \sum_{F\in\Theta} \int_{O_F} \phi dQ \nonumber \\
        &\leq \sum_{F\in\Theta} \{-cM_FQ(O_F-(D_{-cM_F}^\phi\cap O_F)) + M_FQ(D_{-cM_F}^\phi\cap O_F)\} \nonumber \\
        &=  \sum_{F\in\Theta} - cM_FQ(O_F) + cM_FQ(D_{-cM_F}^\phi\cap O_F) + M_FQ(D_{-cM_F}^\phi\cap O_F) \nonumber \\
        &= \sum_{F\in\Theta} -(c+1)M_F\left(\frac{cQ(O_F)}{c+1}-Q(D_{-cM_F}^\phi\cap O_F)\right) \nonumber \\
        &=  \sum_{F\in\Theta} -(c+1)M_F\left(Q(O_F) - \frac{Q(O_F)}{c+1}-Q(D_{-cM_F}^\phi\cap O_F)\right).
    \end{align}

    By Lemma \ref{lem:closedconvex_nu}, for sufficiently large $c$ and small $\delta$, 
    \begin{align}&\sup\{ Q(C\cap O_F) \mid C\subseteq \mathcal{O}(\mathcal{E},\Omega) : \text{ closed and convex }, \nu(C)\leq \delta \} \nonumber \\< ~&Q(O_F) - \frac{Q(O_F)}{c+1}. \end{align}
    Thus, if we let $M_F\to\infty$ for some $F$, by the equation \eqref{eq:levelset_nu}, $L(\phi,Q)\to-\infty$. Note that this holds regardless of the values of $M_G$ for any $G\neq F$. Thus, for sufficiently large $M$, we need only consider $\phi$ with $\max_{F\in\Theta}\max_{x\in O_F}\phi(x)\leq M$, but obviously $L(\phi,Q)=\int \phi dQ\leq M$. Therefore, $L(Q)<\infty$ also holds.

    Next, we show the existence of the maximizer. Take $(\phi_N)_N \subseteq \Phi(Q)$ such that $L(\phi_N,Q)\nearrow L(Q)$. Then, we can take the sequence properly such that there exists a constant $M_\ast$ such that $M_N\leq M_\ast$, where $M_N = \max_x \phi_N(x)$. 
    
    Take arbitrary $x_0\in\inter(\csupp(Q))\cap \cup_{F\in\Theta} O_F$. If $\phi_N(x_0)<M_N$, the upper level set $D_{\phi_N(x_0)}^{\phi_N}$ is a closed convex set and does not include $x_0$ as an interior point. Thus, 
    \begin{align}
           & L(\phi_N, Q) = \int \phi_N dQ \leq \phi_N(x_0) + (M_N - \phi_N(x_0))h(Q, x_0). 
    \end{align}

    This equation holds if $\phi_N(x_0)=M_N$ as well. Therefore, for any $x_0\in\inter(\csupp(Q)) \cap \cup_{F\in\Theta} O_F$, 
    \begin{align}
        \phi_N(x_0)\geq \frac{L(\phi_N, Q)-M_N h(Q,x_0)}{1-h(Q, x_0)} \geq \frac{L(\phi_1, Q)-|M_\ast|}{1-h(Q, x_0)}.
    \end{align}
    By Lemma \ref{lem:Q_h}, this is bounded below. Thus, we have just shown that
    \begin{align}
        \liminf_{N\to\infty} \phi_N(x_0) &> -\infty & \text{for any }x_0\in\inter(\csupp(Q)) \cap \cup_{F\in\Omega_p} O_F.
    \end{align}

    Let $\Theta_{\csupp}$ denote the set of positive orthants with $\csupp(Q)\cap O_F \neq \emptyset$.
    For $O_F\in\Theta_{\csupp}$ and any $x_F\in \dom \phi_N\cap O_F$, there exists $R_F>0$ such that if $x\in \bar{O}_F, d(x,x_F)\geq R_F$, $\phi_N(x) - \phi_N(x_F)\leq -1$. This can be seen as follows. First, we can take $x_F^{(0)}, \ldots, x_F^{(p)}\in \inter(\csupp(Q))\cap O_F$ such that these points are not contained in any hyperplane. Then, there exists some $m\in\mathbb{R}$ and $N_0\in\mathbb{N}$ such that if $N\geq N_0$, $\phi_N(x_F^{(i)}) \geq m~~~(\forall i)$. Now, if $R_F>0$ is sufficiently large, in order for any $\phi\in\Phi(Q)$ with $\phi(x_F^{(i)}) \geq m~~~(\forall i)$ to satisfy $\int_{O_F} \exp(\phi)d\nu \leq 1$, for any point  $x\in O_F$ with $d(x,x_F^{(0)})\geq R_F$, $\phi(x) - \phi(x_F^{(0)}) \leq -1$ must hold. Thus, in particular, if we take $x_0\in\dom\phi$, there exists a constant $R>0$ such that for any points $x\in \cup_{F\in\Theta_{\csupp}}\bar{O}_F$, if $d(x,x_0)>R$, $\phi_N(x)-\phi_N(x_0)\leq -1$ for all $N\geq N_0$. 

    The proof of Lemma \ref{lem:majorize_func} suggests that there exists $\bar{\phi}(x) = -ad(x,x_0) + b$ such that for any $N\in\mathbb{N}$, $O_F\in \Theta$ and $x\in \bar{O}_F$, $\phi_N(x)\leq \bar{\phi}(x)$. But by taking an appropriate version of $\phi_N$, $\phi_N$ can be bounded by $\bar{\phi}$ globally as well (for example by taking $\phi_N$ to be $\min(\phi_N, \bar{\phi})$).

    Now, we can apply Lemma \ref{lem:limit_conv} to $C\supseteq \inter(\csupp(Q)\cap \cup_{F\in\Omega_p} O_F)$ and $\bar{\phi}(x)=(-ad(x,x_0)+b) + \log \mathbf{1}\{x\in \csupp(Q)\}$: we can take $\psi\in\Phi$ such that 
    \begin{align}
        \limsup_{k\to\infty} \phi_{N(k)}(x) &\leq \psi(x)\leq -ad(x,x_0) + b & \text{for all }x \in \mathcal{O}(\mathcal{E}, \Omega), \\
         \lim_{k\to\infty} \phi_{N(k)}(x) &= \psi(x) > -\infty & \text{for all } x \in \inter(\csupp(Q))\cap \cup_{F\in\Omega_p} O_F.
    \end{align}
    Now, since the boundary of closed convex sets in $\mathcal{O}(\mathcal{E}, \Omega)$ also has $\nu$-measure zero, by dominated convergence, $\int e^\psi d\nu=1$. Further, if we apply Fatou's lemma to $x\mapsto -ad(x,x_0) + b - \phi_{N(k)}(x)$, 
    \begin{align}
        \limsup_{k\to\infty} \int \phi_{N(k)}dQ \leq \int \psi dQ. 
    \end{align}
    Therefore, 
    \begin{align}
        L(Q)\geq L(\psi,Q)\geq \limsup_{k\to\infty}L(\phi_{N(k)},Q) = L(Q).
    \end{align}
    Hence $L(\psi,Q) = L(Q)$ and $\psi$ is the desired maximizer.
\end{proof}

\begin{proof}[Proof of Theorem \ref{th:continuity}]
Assume first that $\lambda \coloneqq \lim_{N\to\infty} L(Q_N)$ exists and we show below that $\lambda = L(Q)$. This assumption is not restrictive because, in general, we can apply this result to a suitable subsequence to get $\liminf L(Q_N) = \limsup L(Q_N) = L(Q)$, thus $\lim_{N\to\infty} L(Q_N)$ exists anyway.

    Let $\phi(x) = -d(x,x_0)$. Then, by the convergence in Wasserstein-1 distance,
    \begin{align}
        \lambda \geq \lim_{N\to\infty} L(\phi, Q_{N}) &= -\int d(x,x_0)dQ - \int \exp(-d(x,x_0))d\nu + 1 > -\infty.
    \end{align}
    Thus, $\lambda>-\infty$, and in particular we can assume without loss of generality that $L(Q_N) > -\infty, \forall N$.

    Also, for simplicity, we assume that $L(\phi,Q_N)$ has the maximizer for all $N$, and let $\psi_N\in\Phi(Q_N)$ denote them.

    Let $\Theta=\{F\in\Omega_p\mid Q(O_F)>0\}$ and $M_N = \max_{x\in \mathcal{O}(\mathcal{E}, \Omega) \cap \cup_{F\in\Theta} O_F} \psi_N(x) \in\mathbb{R}$. We first show that the sequence $(M_N)_N$ is bounded. This would imply $L(Q)<\infty$, since $L(Q_N)  = \int \psi_N dQ_N \leq M_N$ implies $\lambda \leq \liminf_{N\to\infty} M_N$.

    Take any closed convex set $C$. If $\nu(C)$ is sufficiently small, for any positive orthant $O_F$ for $F\in\Theta$, there exists $N_0\in\mathbb{N}$ and $\eta>0$ such that if $N\geq N_0$, $Q_N(C\cap O_F) \leq Q(O_F) - \eta$ by Lemma \ref{lem:closeconvex_nu_weak}.

    In particular, if we take a sequence of closed convex sets $\{C_N\}$ such that $\nu(C_N)\to 0$, for any positive orthant $O_F$ with $Q(O_F)>0$, $\limsup_{N\to\infty}Q_N(C_N\cap O_F) < Q(O_F)$.

    For $F\in\Theta$, let $M_F^{(N)} = \max_{x\in O_F} \psi_N(x)$. By Lemma \ref{lem:level_nu}, \begin{align}\nu(D_{-cM_F^{(N)}}^{\psi_N}\cap O_F) &\leq \frac{(1+c)^p (M_F^{(N)})^p e^{-M_F^{(N)}}}{p! + o(1)} &(\text{ as } M_F^{(N)}\to\infty). \label{eq:bound_levelset}
    \end{align}
    On the other hand,
        \begin{align}
            &L(\psi_N, Q_N) \nonumber \\
            =& \sum_{F\in\Theta}\int_{O_F}\psi_N dQ_N \nonumber\\
            \leq& \sum_{F\in\Theta}\{-cM_F^{(N)} Q_N(O_F - (D_{-cM_F^{(N)}}^{\psi_N}\cap O_F)) + M_F^{(N)} Q_N(D_{-cM_F^{(N)}}^{\psi_N} \cap O_F)\} \nonumber\\
    =& \sum_{F\in\Theta}-cM_F^{(N)} Q_N(O_F) + (c+1)M_F^{(N)} Q_N(D_{-cM_F^{(N)}}^{\psi_N}\cap O_F)\nonumber \\
        =& \sum_{F\in\Theta} -(c+1)M_F^{(N)}\left( \frac{cQ_N(O_F)}{c+1} - Q_N(D_{-cM_F^{(N)}}^{\psi_N}\cap O_F) \right)\nonumber\\
        =& \sum_{F\in\Theta} (c+1)M_F^{(N)}\left( - Q_N(O_F) + \frac{Q_N(O_F)}{c+1} + Q_N(D_{-cM_F^{(N)}}^{\psi_N}\cap O_F) \right). \label{eq:l_bound}
       \end{align}

    Now, take $c$ sufficiently large such that $1/(c+1) < \eta/2$. Then by equation \eqref{eq:l_bound}, 
       \begin{align}
            L(\psi_N, Q_N) &\leq\sum_{F\in\Theta}(c+1)M_F^{(N)}\left( - Q_N(O_F) + \eta/2 + Q_N(D_{-cM_F^{(N)}}^{\psi_N}\cap O_F) \right). \label{eq:c_adjusted}
       \end{align}
       But the equation \eqref{eq:bound_levelset} and the previous argument shows that there exists $M\in\mathbb{R}$ such that if $N\geq N_0$ and $M_F^{(N)}\geq M$, $Q_N(D_{-c M_F^{(N)}}^{\psi_N}\cap O_F) < Q(O_F)-\eta.$ Thus, each summand of the equation \eqref{eq:c_adjusted} converges to $-\infty$ uniformly over $N$ as $M_F^{(N)}\to \infty$. Therefore, if $N\geq N_0$, (as long as we let $\psi_N$ vary inside $\Phi(Q)$), each summand is bounded above. Thus, in particular, $(M_N)_N$ has to be bounded above.\footnote{
      Suppose $(M_N)_N$ is not bounded above. Then, there exists a subsequence $(M_{N(k)})_k$ and a positive orthant $F\in\Theta$ such that $\lim_{k\to\infty}M_{N(k)} = \lim_{k\to\infty}\max_{x\in O_F}\psi_N(x) = \infty$. Then, the summand with regard to $F$ approaches $-\infty$, while other terms are bounded above. Thus, $\liminf L(\psi_N, Q_N) = \lim L(\psi_N, Q_N) =  -\infty$.
       } Note also that we can take a version of $\psi_N$ to satisfy $\psi_N\leq M_N$ globally.

    Next, we show that the sequence of functions $\psi_N$ can be bounded above uniformly by an element of $\Phi$. 
    Take arbitrary $x_0\in\inter(\csupp(Q))\cap\cup_\Theta O_F$. If $\psi_N(x_0)<M_N$, the upper level set $D_{\psi_N(x_0)}^{\psi_N}$ is a closed convex set and does not include $x_0$ as an interior point. Thus, 
    \begin{align}
           & L(Q_N) = \int \psi_N dQ_N \leq \psi_N(x_0) + (M_N - \psi_N(x_0))h(Q_N, x_0). 
    \end{align}
    This equation holds for $x_0$ with $\psi_N(x_0)=M_N$ as well. Therefore, for any $x_0\in\inter(\csupp(Q))\cap\cup_\Theta O_F$, 
    \begin{align}
        \psi_N(x_0)\geq \frac{L(Q_N)-M_N h(Q_N,x_0)}{1-h(Q_N, x_0)} \geq \frac{L(Q_N)-|M_N|}{1-h(Q_N, x_0)}.
    \end{align}
    By Lemma \ref{lem:Q_n_h} and the boundedness of $M_N$, for sufficiently large $N$, this is bounded below. Thus, we have just shown that
    \begin{align}
        \liminf_{N\to\infty} \psi_N(x_0) &> -\infty & \text{for any }x_0\in\inter(\csupp(Q))\cap \cup O_F.
    \end{align}

    Let $\Theta_{\csupp}$ denote the set of positive orthants with $\csupp(Q)\cap O_F \neq \emptyset$. Fix $x_0\in \inter(\csupp(Q))\cap \cup O_F$.
    Then, for $O_F\in\Theta_{\csupp}$, there exists $R_F>0$ and $x_F\in O_F$ such that if $x\in \bar{O}_F$ and $d(x,x_F)\geq R_F$, $\psi_N(x) - \psi_N(x_0)\leq -1$. This can be seen as follows. First, we can take $x_F^{(0)}, \ldots, x_F^{(p)}\in \inter(\csupp(Q))\cap O_F$ such that these points are not contained in any hyperplane. Then, there exists some $m\in\mathbb{R}$ and $N_F\in\mathbb{N}$ such that if $N\geq N_F$, $\psi_N(x_F^{(i)}) \geq m$ for all $i=0,\ldots,p$ and $\psi_N(x_0)\geq m$. Now, if $R_F>0$ is sufficiently large, in order for any $\phi\in\Phi(Q)$ with $\phi(x_F^{(i)}) \geq m~(\forall i), \phi(x_0)\geq m$ to satisfy $\int_{O_F} \exp(\phi)d\nu \leq 1$, for any point  $x\in \bar{O}_F$ with $d(x,x_F^{(0)})\geq R_F$, $\phi(x)-\phi(x_0) \leq \phi(x) - m \leq -1$ needs to hold. 
    
    Thus, in particular, there exists a constant $R>0$ and $N_0\in\mathbb{N}$ such that for any points $x\in \cup_{\Theta_{\csupp}}\bar{O}_F$, if $d(x,x_0)>R$, $\psi_N(x)-\psi_N(x_0)\leq -1$ for all $N\geq N_0$.

    Now, the proof of Lemma \ref{lem:majorize_func} implies that there exists $\bar{\psi}(x) = -ad(x,x_0) + b$ such that for any $N\geq N_0$, $O_F\in \Theta_{\csupp}$ and $x\in \bar{O}_F$, $\psi_N(x)\leq \bar{\psi}(x)$. But by taking an appropriate version of $\psi_N$, $\psi_N$ can be bounded by $\bar{\psi}$ globally as well (for example by retaking $\psi_N$ to be $\min(\psi_N, \bar{\psi})$).

    Now we apply Lemma \ref{lem:limit_conv} to $C\supseteq \inter(\csupp(Q)\cap \cup_{F\in\Omega_p} O_F)$ and $\bar{\phi}(x)=(-ad(x,x_{0})+b)$, and we can get $\psi\in\Phi$ with
    \begin{align}
        \limsup_{k\to\infty,x\to y} \psi_{N(k)}(x) &\leq \psi(y)\leq -ad(y,x_{0}) + b ~~~~~~~ \text{for all }y \in \mathcal{O}(\mathcal{E}, \Omega), \\
         \lim_{k\to\infty,x\to y} \psi_{N(k)}(x) &= \psi(y) > -\infty ~~~~~~~ \text{for all } x \in \inter(\csupp(Q))\cap \cup_{F\in\Omega_p} O_F.
    \end{align}

    Now, since the boundary of a convex set on $\mathcal{O}(\mathcal{E}, \Omega)$ also has $\nu$-measure zero, by dominated convergence, $\int e^\psi d\nu=1$.

    Next, we show that $\lambda\leq L(Q)$. 
    By Skorokhod’s representation theorem, there exists a probability space $(\Omega, \mathcal{A}, P)$ and the random variables on it $X_N$, $X$, such that $X_N\sim Q_N$, $X\sim Q$, and $\lim X_N = X~~(P-a.s.)$. Thus, if we let $H_N = -ad(X_N, x_{0}) + b - \psi_N(X_N)$,
\begin{align}
    \lambda = \lim \int \psi_N dQ_N &= \lim \left(\int (-ad(x, x_{0}) + b) dQ_N - E(H_N)\right) \nonumber \\
    &= -a \int d(x,x_{0}) dQ + b - \liminf E(H_N) \nonumber \\
    &\leq  -a \int d(x,x_{0}) dQ + b - E(\liminf H_N) & \nonumber \\
    &\leq -a \int d(x,x_{0}) dQ + b - E(-ad(X, x_{0}) + b- \psi(X))  \nonumber \\
    & \leq L(Q).
\end{align}

    Now we show the other side, $\lambda \geq L(Q)$. For $M>0$, we define
\begin{align}
    \psi_Q^{(M)}(x) = \sup_{y \in \mathcal{O}(\mathcal{E},\Omega)} \{\psi_Q(y) - Md(x,y)\}.
\end{align}
    It is obvious that $\psi_Q^{(M)}(x) \geq \psi_Q(x)$. By Lemma \ref{lem:lipschitz-majorization}, $\psi_Q^{(M)}$ satisfies the following three properties:
    \begin{enumerate}
        \item $\psi_Q^{(M)}$ is $M$-Lipschitz
        \item $\psi_Q^{(M)}\in \Phi$
        \item $\psi_Q^{(M)}(x)\searrow \psi_Q(x)$ pointwise.
    \end{enumerate}

     Now, by Skorokhod's representation theorem, there exists a probability space $(\Omega, \mathcal{A}, P)$ and random variables $Y_N, Y: \Omega \to X$ such that $Y_N \to Y~~(a.s.)$ and $Y_N\sim Q_N, Y\sim Q$. Thus, 

     \begin{align}
    \lambda &= \lim_{N\to\infty}\int \psi_{Q_N} dQ_N  \nonumber \\
    &\geq \limsup_{N\to\infty}\int \psi_Q^{(M)}dQ_N - \int \exp(\psi_Q^{(M)})d\nu + 1 \nonumber \\
    &= \limsup_{N\to\infty}\int \psi_Q^{(M)}(Y_N)dP - \int \exp(\psi_Q^{(M)})d\nu + 1 .
\end{align}

    Here, $\psi_Q^{(M)}(Y_N)$ is uniformly integrable. By the continuous mapping theorem, $\psi_Q^{(M)}(Y_N) \to \psi_Q^{(M)}(Y) ~~(a.s.)$. Also, by $M$-Lipschitzness of $\psi_Q^{(M)}$, the function $h(x) = \psi_Q^{(M)}(x)/(1+d(x,x_0))$ is bounded. Therefore, 
\begin{align}
    &E[|\psi_Q^{(M)}(Y_N)| \mathbf{1} \{|\psi_Q^{(M)}(Y_N)|\geq T\}] \\ =&  E\left[\frac{|\psi_Q^{(M)}(Y_N)|}{1+d(Y_N,x_0)} (1+d(Y_N,x_0)) \mathbf{1} \{|\psi_Q^{(M)}(Y_N)|\geq T\}\right] \nonumber \\
    \leq& R E\left[ (1+d(Y_N,x_0)) \mathbf{1} \{|\psi_Q^{(M)}(Y_N)|\geq T\} \right] \nonumber \\
    \leq&  E\left[ R(1+d(Y_N,x_0)) \mathbf{1} \{R(1 + d(Y_N,x_0))\geq T\} \right].
\end{align}

Now, $R(1+d(Y_N,x_0))$ is uniformly integrable because $d(Y_N,x_0)\to d(Y,x_0)$ almost surely and $\int d(Y_N,x_0) dP < \infty$, $\int d(Y_N,x_0) dP \to \int d(Y,x_0) dP$ imply $\int |d(Y_N,x_0)-d(Y,x_0)| dP \to 0$. Thus, $d(Y_N,x_0)\to d(Y,x_0)$ in $L^1$. $L^1$ convergence leads to the uniform integrability.

This shows the uniform integrability of $\psi_Q^{(M)}(Y_N)$ as well. By Vitali's theorem, $\psi_Q^{(M)}(Y_N)$ converges to $\psi_Q^{(M)}(Y)$ in $L^1$. Thus, 

\begin{align}
    \lambda &\geq \lim_{N\to\infty} \int \psi_Q^{(M)}(Y_N)dP - \int \exp(\psi_Q^{(M)})d\nu + 1 \nonumber \\
    &= \int \psi_Q^{(M)}(Y) dP - \int \exp(\psi_Q^{(M)})d\nu + 1 \nonumber \\
    &= \int \psi_Q^{(M)} dQ - \int \exp(\psi_Q^{(M)})d\nu + 1 .
\end{align}

Now, by monotone convergence, for $M\in\mathbb{N}$,
\begin{align}
     \int \psi_Q^{(M)} dQ &= -\int \left(\psi_Q^{(1)} - \psi_Q^{(M)}\right) dQ + \int \psi_Q^{(1)} dQ \nonumber \\
     &\xrightarrow{M\to \infty}  - \int \left(\psi_Q^{(1)} - \psi_Q\right) dQ + \int \psi_Q^{(1)} dQ  = \int \psi_Q dQ . 
\end{align}
Also, by dominated convergence, $\int \exp(\psi_Q^{(M)})d\nu \xrightarrow{M\to\infty} \int \exp(\psi_Q)d\nu = 1$.

Combination of all shows 
\begin{align}
    \lambda \geq \int \psi_Q dQ = L(Q).
\end{align}

This also shows that $\psi$ is the log-concave projection of $Q$ as well. By the uniqueness of the log-concave projection (Theorem \ref{th:uniqueness}), $\psi_Q = \psi$ $\nu$-almost surely and they agree on $\inter(\dom \psi) \cap \cup_{F\in\Omega_p} O_F$. In particular, by taking $\psi = \psi_Q$, we have
    \begin{align*}
        \limsup_{k\to\infty} \exp(\psi_{N(k)})(y) &\leq \exp(\psi_Q)(y)\leq \exp(-ad(y,x_{0}) + b) \quad \text{for all }y \in \mathcal{O}(\mathcal{E}, \Omega) \\
         \lim_{k\to\infty} \exp(\psi_{N(k)})(y) &= \exp(\psi_Q)(y) > 0 \quad \text{for all } y \in \inter(\csupp(Q))\cap \cup_{F\in\Omega_p} O_F.
    \end{align*}
    By dominated convergence,
    \begin{align}
        \lim_{k\to\infty} \int |\exp(\psi_{N(k)})(x) - \exp(\psi_Q)(x)|d\nu = 0.
    \end{align}

We only established the convergence result with respect to the subsequence $\{N(k)\}_k$, but all the results need to hold in the original sequence as well.

\end{proof}

For the proof of Proposition \ref{prop: weak_conv_log_conc}, we first need the following result about uniform convergence of closed convex sets. This is a result obtained by \citet{Rao1962-zt} in Euclidean space.

\begin{theorem}\label{th:uniform_convex}
    Suppose $\mu$ is a measure on $\mathcal{O}(\mathcal{E},\Omega)$ such that every convex set has $\mu$-null boundary. Then $\mu_n\xrightarrow[d]{}\mu$ if and only if 
    \begin{align}
        \sup_{C\in \mathcal{C}}|\mu_n(C) - \mu(C)| \to 0.
    \end{align}
    where $\mathcal{C}$ denotes the set of all closed convex sets.
\end{theorem}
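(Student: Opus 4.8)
The plan is to adapt the Euclidean argument of \cite{Rao1962-zt}, replacing compactness of the family of convex sets by Lemma~\ref{lem:proj-set} and Euclidean convexity facts by their CAT(0) counterparts (joint convexity of the distance, Lemma~\ref{lem:joint_convexity_of_geodesic}). The ``if'' direction is elementary: since $d(\cdot,x_0)$ is convex on a CAT(0) space (Lemma~\ref{lem:joint_convexity_of_geodesic} with a constant geodesic), every closed metric ball lies in $\mathcal{C}$, and so does every finite intersection of closed balls; hence the hypothesis gives $\mu_n(D)\to\mu(D)$ for every such finite intersection $D$, and by inclusion--exclusion $\mu_n(A)\to\mu(A)$ for every finite union $A$ of closed balls. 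Any open set $U$ is a countable increasing union of finite unions of closed balls, so $\liminf_n\mu_n(U)\ge\mu(U)$, and the portmanteau theorem yields $\mu_n\xrightarrow{d}\mu$.

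For the ``only if'' direction I would argue by contradiction: assume $\mu_n\xrightarrow{d}\mu$ but, along a subsequence (not relabeled), there are $\epsilon>0$ and closed convex sets $C_n$ with $|\mu_n(C_n)-\mu(C_n)|\ge\epsilon$. First observe that the null-boundary hypothesis forces $\mu$ to be non-atomic and, more generally, to vanish on every convex set with empty interior; in particular $\mu$ is concentrated on $\bigcup_{F\in\Omega_p}O_F$. Since $\mathcal{O}(\mathcal{E},\Omega)$ is Polish, $\{\mu_n\}\cup\{\mu\}$ is uniformly tight, so for any $\eta>0$ there is a compact $K$ with $\mu_n(K),\mu(K)>1-\eta$; as at least one of $\mu_n(C_n),\mu(C_n)$ is $\ge\epsilon$, each $C_n$ meets $K$, hence is nonempty with $d(x_0,C_n)=O(1)$ for a fixed $x_0\in K$. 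Lemma~\ref{lem:proj-set} (the orthant space is Heine--Borel) then produces a further subsequence $C_{n(k)}$ and a nonempty closed set $C$ with $d(\cdot,C_{n(k)})\to d(\cdot,C)$ pointwise; applying Lemma~\ref{lem:joint_convexity_of_geodesic} to geodesic midpoints of metric projections shows $C$ is convex, and since all $d(\cdot,C_{n(k)})$ are $1$-Lipschitz the convergence is uniform on compacta.

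The proof is then finished by showing $\mu_{n(k)}(C_{n(k)})\to\mu(C)$ and $\mu(C_{n(k)})\to\mu(C)$, contradicting $|\mu_{n(k)}(C_{n(k)})-\mu(C_{n(k)})|\ge\epsilon$. For the upper bounds, fix $\delta>0$: the enlargement $C^{(\delta)}=\{x:d(x,C)\le\delta\}$ is closed and convex (distance to a convex set is convex, by Lemma~\ref{lem:joint_convexity_of_geodesic}), so $\mu(\partial C^{(\delta)})=0$ by hypothesis and $\mu_{n(k)}(C^{(\delta)})\to\mu(C^{(\delta)})$ by portmanteau for continuity sets; moreover $C_{n(k)}\cap K\subseteq C^{(\delta)}$ for large $k$ by uniform convergence on $K$, whence $\mu_{n(k)}(C_{n(k)})\le\mu_{n(k)}(C^{(\delta)})+\eta$ and $\mu(C_{n(k)})\le\mu(C^{(\delta)})+\eta$; letting $k\to\infty$, then $\delta\downarrow0$, then $\eta\downarrow0$ gives $\limsup_k\mu_{n(k)}(C_{n(k)})\le\mu(C)$ and $\limsup_k\mu(C_{n(k)})\le\mu(C)$. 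For the lower bounds, the key claim is that every compact $W\subseteq\inter(C)\cap\bigcup_{F\in\Omega_p}O_F$ satisfies $W\subseteq C_{n(k)}$ for all large $k$: for each $x\in W$, work inside the single Euclidean orthant containing a fixed-radius ball about $x$, take a simplex $\conv(y_0,\dots,y_p)\subseteq\inter(C)$ with $x$ well inside it, and pick $y_i^{(k)}\in C_{n(k)}$ with $d(y_i,y_i^{(k)})$ uniformly small; convexity of $C_{n(k)}$ gives $\conv(y_0^{(k)},\dots,y_p^{(k)})\subseteq C_{n(k)}$, and this hull contains $x$ once $k$ is large, uniformly over $W$ by compactness. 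Combined with inner regularity of $\mu$ and (via the portmanteau lower bound on an open set squeezed between $W$ and $\inter(C)$) of $\mu_{n(k)}$, together with $\mu(\inter(C))=\mu(C)$, this yields $\liminf_k\mu_{n(k)}(C_{n(k)})\ge\mu(C)$ and $\liminf_k\mu(C_{n(k)})\ge\mu(C)$.

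I expect the main obstacle to be exactly this lower bound, i.e.\ showing that a compact subset of $\inter(C)$ is \emph{uniformly} eventually contained in $C_{n(k)}$: pointwise one only controls $d(x,C_{n(k)})\to0$, and recovering genuine membership requires the local Euclidean simplex device (the same mechanism as in Lemmas~\ref{lem:closedconvex_nu}, \ref{lem:Q_h}, \ref{lem:limit_conv}), carried out uniformly over $W$, which is where one must invoke that $\mu$ charges only the open maximal orthants so that an honestly Euclidean neighbourhood is available at each relevant point. The remaining ingredients---sequential compactness of $(C_n)$ via Lemma~\ref{lem:proj-set}, CAT(0) convexity of distances and enlargements, and the portmanteau/inclusion--exclusion bookkeeping---should be routine.
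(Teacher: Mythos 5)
Your proof is correct in substance, but it takes a genuinely different route from the paper's. The paper handles Theorem~\ref{th:uniform_convex} in two lines by reduction: it restricts $\mu_n$ and $\mu$ to each maximal closed orthant $\bar{O}_F$ (these restrictions converge weakly, since $\bar{O}_F$ is convex and hence has $\mu$-null boundary), notes that a closed convex subset of $\mathcal{O}(\mathcal{E},\Omega)$ meets each $\bar{O}_F$ in a Euclidean closed convex set, and then applies the Euclidean theorem of \cite{Rao1962-zt} orthant by orthant, summing the finitely many uniform errors. You instead re-prove the Rao-type statement directly in the orthant space: the ``if'' direction via closed balls, inclusion--exclusion and the portmanteau lower bound on open sets; the ``only if'' direction by contradiction, extracting a limiting closed convex set $C$ from the offending sequence $(C_n)$ via Lemma~\ref{lem:proj-set}, using joint convexity of the distance (Lemma~\ref{lem:joint_convexity_of_geodesic}) both for convexity of $C$ and of the enlargements $C^{(\delta)}$, and using the local Euclidean simplex device (the same mechanism as in Lemmas~\ref{lem:closedconvex_nu}, \ref{lem:Q_h} and \ref{lem:limit_conv}, and close in spirit to the paper's proof of Lemma~\ref{lem:Q_n_h}) for the lower bound $\liminf_k \mu_{n(k)}(C_{n(k)})\ge \mu(C)$, with the squeeze through an open set between a compact $W\subseteq\inter(C)$ and $\inter(C)$ correctly supplying the portmanteau inequality in the right direction. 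What the paper's reduction buys is brevity and a clean transfer of the Euclidean result; what your argument buys is self-containedness and an explicit accounting of the geometric ingredients actually needed (Heine--Borel, CAT(0) convexity of distances, local Euclidean neighbourhoods). One small caveat: your intermediate claim that $\mu$ is concentrated on $\bigcup_{F\in\Omega_p}O_F$ is only literally true when every maximal face of $\Omega$ has dimension $p$; if $\Omega$ has maximal faces of dimension below $p$, those open orthants can carry mass, since they are not boundaries of convex sets. The fix is immediate---run the simplex argument inside whichever \emph{maximal} open orthant (of whatever dimension) contains the point, and note that the complement of the union of all maximal open orthants is a finite union of convex sets with empty interior, hence $\mu$-null---but the claim should be stated that way; the rest of your bookkeeping (tightness, $C_n\cap K\neq\emptyset$, uniform convergence on compacta of the $1$-Lipschitz functions $d(\cdot,C_{n(k)})$, continuity sets $C^{(\delta)}$) checks out.
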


\begin{proof}
    $\mu_n$ restricted to each nonnegative orthant weakly converges to $\mu$ restricted to the same orthant. The rest follows from the Euclidean result by \citet{Rao1962-zt}.
\end{proof}

\begin{proof}[Proof of Proposition \ref{prop: weak_conv_log_conc}]
    
    The proof mostly parallels that of the Euclidean case derived by \cite{Cule2010-uy}.
    \paragraph{(1)}
    
    Let $\mathcal{C}$ be the set of all convex sets. Then, 
    \begin{align}
        \sup_{C\in\mathcal{C}}\left| \int_{C} (f_N - f) d\nu \right| \to 0.
    \end{align}
    as $N\to\infty$ by Theorem \ref{th:uniform_convex}. 

    Also, the Lebesgue Differentiation Theorem can be generalized to this space:
    for almost every point $x_0$, for any nicely shrinking sets $\{B_i\}_i$ to $x_0$,
    \begin{align}
        \lim_{i\to\infty} \frac{1}{\nu(B_i)}\int_{B_i}|f(x)-f(x_0)|d\nu(x) = 0.
    \end{align}

    Let $E_1 = \{ x\mid \liminf f_N(x) < f(x) \}$ and suppose $\nu(E_1) > 0$. Then there exists a Lebesgue point $x_0$ of $f$ with $x_0\in E_1$, $f(x_0)<\infty$, and $x_0$ is in one of the positive orthants. Let $\epsilon = f(x_0) - \liminf f_N(x_0)$. We can find a sequence $f_{N_k}$ with $f_{N_k}(x_0) < f(x_0) - 3\epsilon/4$. For $\delta > 0$, define the convex sets
    \begin{align}
        D_{k,\delta} = \{x\in \bar{B}_{\delta}(x_0) \mid f_{N_k}(x)\geq f(x_0) - \epsilon/2\}.
    \end{align}
    By choosing $\delta$ sufficiently small such that the $\delta$-ball around $x_0$ is contained in the positive orthant, we have $\nu(B_\delta\backslash D_{k,\delta}) \geq \nu(B_{\delta})/2$ because of concavity. This means $B_\delta\backslash D_{k,\delta}$ is a nicely shrinking set with the same eccentricity bound for every $k$, thus, for sufficiently small $\delta$, 
    \begin{align}
        \frac{1}{\nu(B_{\delta}\backslash D_{k,\delta})} \left| \int_{B_\delta \backslash D_{k,\delta}} \{f(x_0) - f(x)\} d\nu(x) \right| \leq \frac{\epsilon}{8}.
    \end{align}

    Then
    \begin{align}
        &\int_{B_{\delta}} (f-f_{N_k}) d\nu \nonumber \\
        =& \int _{B_{\delta}\backslash D_{k,\delta}} \{f(x) - f(x_0) + f(x_0) - f_{N_k}(x))\} d\nu(x) + \int_{D_{k,\delta}} (f-f_{N_k}) d\nu \nonumber \\
        \geq& -\frac{\epsilon}{8}\nu(B_{\delta}) + \frac{\epsilon}{4}\nu(B_{\delta}) - \sup_{C\in\mathcal{C}}\left| \int_{C} (f-f_{N_k})d\nu \right| \nonumber \\
        \to& \frac{\epsilon}{8}\nu(B_{\delta}).
    \end{align}
    which is a contradiction. Hence $\nu(E_1) = 0$.

    Thus, without loss of generality, we may assume that $f\leq \liminf_{N}f_N$. But by Fatou's lemma, 
    \begin{align}
        1 = \int f \leq \int \liminf f_N \leq \liminf \int f_N = 1.
    \end{align}
    so we may assume that $f = \liminf f_N$. $\liminf f_N$ is log-concave.

    \paragraph{(2)}

    Suppose $\nu(E_2) > 0$, where $E_2 = \{x \mid \limsup f_N(x) > f(x)\}$. Because $f$ is continuous $\nu$-a.e., we can find $x_0\in E_2$ such that $x_0$ is in one of the positive orthants and $f$ is continuous at this point. Let $\epsilon_0 = \limsup f_N(x_0) - f(x_0)$ and $\epsilon = \min\{1, \epsilon_0\}$. We can find a subsequence $(f_{N_k})$ with $f_{N_k}(x_0) > f(x_0) + \frac{3\epsilon}{4}$ for all $k$. Define the convex sets
    \begin{align}
        \tilde{D}_{k,\delta} = \{x\in \bar{B}_{\delta}(x_0) \mid f_{N_k}(x) \geq f(x_0) + \epsilon/2\}.
    \end{align}

    \begin{itemize}
        \item[Case 1]

        Suppose that the sequence $\{\tilde{D}_{k,\delta} \mid k\in\mathbb{N}, \delta>0\}$ shrinks nicely to $x_0$ with the same eccentricity bound $\eta$. Then, for $\delta$ sufficiently small such that $|f(x)-f(x_0)| < \epsilon/4$ for all $x\in \bar{B}_{\delta}(x_0)$, for every $k$, 
        \begin{align}
            \int_{\tilde{D}_{k,\delta}} (f_{N_k} - f) \geq \frac{\nu(\tilde{D}_{k,\delta})\epsilon}{4} \geq \frac{\nu(\bar{B}_{\delta}(x_0))\eta\epsilon}{4},
        \end{align}
        which contradicts uniform convergence in the class of convex sets.

        \item[Case 2]
        Suppose no uniform eccentricity bound holds across $k$; i.e., 
        $\forall\,\eta\in(0,1],\,\forall\,\delta_0>0,\ \exists\,k,\ 0<\delta<\delta_0$
        with $\nu(\tilde D_{k,\delta})<\eta\,\nu(B_\delta(x_0))$.
        Assume $f(x_0)>0$ and, decreasing $\epsilon$ if needed, $f(x_0)>\epsilon/2$.
        If a change in $\epsilon$ makes them shrink nicely with the same eccentricity bound, then it reduces to Case 1. 
        Let $\delta_0>0$ such that $\bar{B}_{\delta_0}(x_0)$ is entirely contained in one positive orthant. Then for $\delta<\delta_0$ and for each $k$, the ratio $\nu(\tilde{D}_{k,\delta})/\nu(\bar{B}_{\delta}(x_0))$ has to be decreasing because of concavity. 
        
        If all of $\{\tilde{D}_{k,\delta} \mid \delta>0\}$ except for some finite $k$ have the same positive eccentricity bound, then simply those indices $k$ can be ignored, and the arguments given in Case 1 can be exploited. So we only have to consider the case where we can take a sequence $k(1)<k(2)<\ldots$ and $\delta$ such that

        \begin{align}
            \frac{\nu(\tilde{D}_{k(l),\delta})}{\nu(\bar{B}_{\delta}(x_0))} \leq \xi
        \end{align}
        for any given $\xi>0$.

        By concavity, for $\delta < \delta_0$, it is simple to see that $\nu(D_{k,\delta}) \leq K(\epsilon)\nu(\tilde{D}_{k,\delta})$ for some constant $K(\epsilon)$. We can also assume that $|f(x)-f(x_0)|\leq \epsilon/4$ for all $x\in \bar{B}_{\delta}(x_0)$. We can therefore conclude that for all $l$, 
        \begin{align}
            \int_{\bar{B}_{\delta}(x_0)} (f_{N_{k(l)}} - f) &= \int_{\bar{B}_{\delta}\backslash {D_{k(l),\delta}}} (f_{N_{k(l)}} - f) + \int_{ {D_{k(l),\delta}}} (f_{N_{k(l)}} - f) \nonumber \\
            &\leq -\frac{\epsilon}{4} \nu(\bar{B}_{\delta}\backslash D_{k(l),\delta}) + \sup_{C\in\mathcal{C}}\int_{C}(f_{N_{k(l)}} - f) \nonumber \\
            &\leq -\frac{\epsilon}{4}(1 - K\xi)\nu(\bar{B}_{\delta}(x_0)) + \sup_{C\in\mathcal{C}}\int_{C}(f_{N_{k(l)}} - f).
        \end{align}
        Thus by taking $\xi$ to be sufficiently small, we have  $\limsup_{l}\int_{\bar{B}_{\delta}(x_0)} (f_{N_{k(l)}} - f) < 0$, which is a contradiction.

        \item[Case 3] 
        Finally, suppose $f(x_0) = 0$. We only have to consider the case where $\nu(\{x \in E_2 \mid f(x)=0\}) > 0$, because otherwise, we should be able to find a point $x_0$ that satisfies the conditions of either case 1 or case 2. 
        There should be at least one positive orthant $O_F$ such that $\nu(\{x \in E_2 \mid f(x)=0\} \cap O_F) > 0$. Let $C_0 = \{x \in E_2 \mid f(x)=0\} \cap O_F$.  Since $\{f(x)>0\}$ is convex so its boundary has null $\nu$-measure, $D_0 = C_0 - \partial\{f(x)>0\}$ still has positive measure. Then, we can pick $p$ more points from $D_0$, $x_1, \ldots, x_{p}$ in such a way that $\nu(\mathrm{conv}(x_0, \ldots, x_p)) > 0$. Also, since $x_0\in D_0$, for sufficiently small $\delta$, and $x \in\bar{B}_{\delta}(x_0)$, $f(x)=0$. Then for sufficiently large $k$, at one of the $x_j$, $f_{N_k}(x_j)$ must approach $0$ because otherwise it contradicts the convergence $|\int_{\bar{B}_{\delta}(x_0)\cap \mathrm{conv}(x_0, \ldots, x_p)} f_{N_k} - f|\to 0$. This contradicts the assumption.
    \end{itemize}

    Hence we have $\nu(E_2) = 0$. This concludes (2).

    \paragraph{(3)}

    Let $\Theta = \{F\in\Omega_p\mid \int_{O_F}f~d\nu>0\}$.
    Write $\phi_N = \log f_N$ and $\phi = \log f$. For each $F_i\in\Theta$, we can find a point $x_i\in\mathrm{int}(\mathrm{dom}~\phi)\cap O_{F_i}$ such that the closed ball of radius $\eta>0$ about $x_i$ is contained in $\mathrm{int}(\mathrm{dom}~\phi)\cap O_{F_i}$. (This is because $\mathrm{dom}\phi$ is always convex, and for a convex set to have a positive measure, its interior must be nonempty. $\eta$ can be taken globally.)

    Fix $a\in(0,a_0)$ and let $\delta = a_0 - a$. Then we can find $R>0$ such that $(d(x,x_i))^{-1}\{\phi(x) - \phi(x_i)\} \leq -(a + \frac{3\delta}{4})$ for all $x\in O_{F_i}$ with $d(x,x_i)\geq R/2$. By utilizing the same argument as in the Euclidean case, we can claim that there exists $N^{(i)}$ such that 
    \begin{align}
        \frac{\phi_N(x) - \phi_N(x_i)}{d(x,x_i)} \leq -\left(a + \frac{\delta}{4}\right).
    \end{align}
    for all $x\in O_{F_{i}}$ with $d(x,x_i)\geq R$ and $N\geq N^{(i)}$. This means that if we set $N_0 = \max_{i}N^{(i)}$, there exists $b\in\mathbb{R}$ such that  $\sup_{N\geq N_0, x\in \cup_{F\in\Theta} O_F} f_N(x) \leq \exp(-(a + \delta/4)d(x,0) + b)$. 

    Now, from (2) and the dominated convergence theorem, we can conclude that
    \begin{align}
        \int \exp(ad(x,0))|f_N(x) - f(x)|d\nu(x) \to 0.
    \end{align}

Finally, suppose that $f$ is continuous and $\mathcal{O}(\mathcal{E},\Omega)$ has the geodesic extension property. Then, on any convex closed set $S$ contained in $\inter(\dom \phi)$, $f_N$ converges to $f$ uniformly. As in the Euclidean case \cite{Cule2010-uy}, we can derive the uniform convergence result as follows. First, for any $\epsilon\in(0,1)$, from the previous argument, we can take a large enough $R>0$ such that for all $x$ with $d(x,0)\geq R$ and for all $N\geq N_0$, $f(x)+f_N(x) \leq \epsilon e^{-ad(x,0)}$. Let $S=\{x\in \bar{B}_R(0) \mid f(x)\geq \epsilon e^{-aR}/2 \}$. This $S$ is closed and convex.  By uniform convergence of $f_N$ to $f$ in $S$, for large enough $N$, $\sup_{x\in S} |f_N(x) - f(x)| \leq \epsilon e^{-aR}$. Combining all, we get
\begin{align}
    \sup_{x\in \mathcal{O}(\mathcal{E},\Omega)} e^{ad(x,0)}|f_N(x)-f(x)| \to 0.
\end{align}

\end{proof}

\section{Proof of Results in Section 4.}
\label{sec:proof-kernel-consistency}
In this subsection, we prove Theorem \ref{th:kde_consistency}.

\subsection{Proof strategy}
To prove the consistency of the kernel density estimator with the modified kernel $K_2$, we consider the following smoothed version of the density function $f$:
\begin{equation}
    f_h(x) = c(x,h) \int_{\mathcal{O}(\mathcal{E},\Omega)} \frac{1}{(2\pi)^{p/2}h^p} \exp\left(-\frac{d(y,x)^2}{2h^2}\right) f(y) d\nu(y),
\end{equation}
where $c(x,h)  = (2\pi)^{p/2} h^p C(x,h)$.

Our proof proceeds in two main steps:

\begin{enumerate}
    \item Show that $f_h(x)$ converges uniformly to $f(x)$ as $h \to 0$, that is,

    \begin{equation}
        \sup_{x \in \mathcal{O}(\mathcal{E},\Omega)} |f_h(x) - f(x)| \to 0 \quad \text{as } h \to 0.
    \end{equation}

    \item Show that the difference between the KDE and the smoothed density function converges to zero uniformly in $x$, that is,

    \begin{equation}
        \sup_{x \in \mathcal{O}(\mathcal{E},\Omega)} |\hat{f}_{N,h_N}(x) - f_{h_N}(x)| \to 0 \quad \text{almost surely as } N \to \infty.
    \end{equation}
\end{enumerate}

\subsection{Proof of uniform convergence of $f_h(x)$ to $f(x)$}

We first show that $f_h(x)$ converges uniformly to $f(x)$ as $h \to 0$. Since $f$ is uniformly continuous and bounded, for any $\varepsilon>0$ there exists $\delta>0$ such that $d(x,y)<\delta \Rightarrow |f(x)-f(y)|<\varepsilon/2$. 
Set
\[
w_{x,h}(y):=\frac{c(x,h)}{(2\pi)^{p/2}h^p}\exp\!\Big(-\frac{d(y,x)^2}{2h^2}\Big),
\qquad \int_{\mathcal{O}(\mathcal{E},\Omega)} w_{x,h}(y)\,d\nu(y)=1.
\]
Then
\begin{align}
|f_h(x)-f(x)|
=&\left|\int_{\mathcal{O}(\mathcal{E},\Omega)} w_{x,h}(y)\,(f(y)-f(x))\,d\nu(y)\right| \\
\le& \int_{\mathcal{O}(\mathcal{E},\Omega)} w_{x,h}(y)\,|f(y)-f(x)|\,d\nu(y) \\
=& \int_{B_\delta(x)} w_{x,h}(y)\,|f(y)-f(x)|\,d\nu(y) \nonumber \\
  &+ \int_{\mathcal{O}(\mathcal{E},\Omega)\setminus B_\delta(x)} w_{x,h}(y)\,|f(y)-f(x)|\,d\nu(y).
\end{align}
For the first term, $|f(y)-f(x)|<\varepsilon/2$ and $\int_{B_\delta(x)} w_{x,h}\,d\nu \le 1$, so it is $\le \varepsilon/2$.
For the second term, let 
\[
\eta(h,\delta):=\sup_{x}\int_{d(y,x)\ge\delta} w_{x,h}(y)\,d\nu(y)\xrightarrow[h\downarrow 0]{}0.
\]
Then $|f(y)-f(x)|\le 2\|f\|_\infty$ gives a bound $2\|f\|_\infty\,\eta(h,\delta)$.
Choosing $h$ so that $\eta(h,\delta)<\varepsilon/(4\|f\|_\infty)$ yields
\[
\sup_{x\in\mathcal{O}(\mathcal{E},\Omega)} |f_h(x)-f(x)| \le \frac{\varepsilon}{2}+2\|f\|_\infty\,\eta(h,\delta) < \varepsilon.
\]

\subsection{Convergence of $\hat{f}_{N,h}(x)$ to $f_h(x)$}

We first provide some devices used in the proof.

\paragraph{Montgomery-Smith's Inequality}
\begin{lemma}[Montgomery-Smith Inequality \cite{Montgomery-Smith1993-zt}]\label{lem:montgomery-smith}
Let $\{X_i\}_{i=1}^N$ be i.i.d. random variables taking values in a Banach space. Let $S_k = \sum_{i=1}^k X_i$. Then, for any $t > 0$ and $1 \leq k \leq N$,

\begin{equation}
    \Pr\left( \|S_k\| > t \right) \leq 3 \Pr\left( \|S_N\| > \frac{t}{10} \right).
\end{equation}
\end{lemma}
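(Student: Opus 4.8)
The plan is to recognize Lemma~\ref{lem:montgomery-smith} as the classical comparison inequality of Montgomery-Smith~\cite{Montgomery-Smith1993-zt}, and to reduce it by symmetrization to the symmetric case, where a L\'evy-type reflection argument applies. Because only a single partial sum $S_k$ is involved (rather than the running maximum $\max_{j\le N}\|S_j\|$), the reduction is somewhat lighter than in the full maximal version, but the essential structure is the same.

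First I would settle the symmetric case. Suppose $Y_1,\dots,Y_N$ are i.i.d.\ and symmetric with partial sums $T_j=\sum_{i\le j}Y_i$. The block $(Y_1,\dots,Y_k)$ is independent of $(Y_{k+1},\dots,Y_N)$, and the latter has the same law as its negation; hence $(T_k,\,T_N-T_k)$ and $(T_k,\,-(T_N-T_k))$ are equal in law, so $2T_k-T_N$ and $T_N$ are equal in law. From the inclusion $\{\|2T_k\|>2t\}\subseteq\{\|T_N\|>t\}\cup\{\|2T_k-T_N\|>t\}$ and a union bound, $P(\|T_k\|>t)\le 2\,P(\|T_N\|>t)$.

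Next I would symmetrize the general case. Let $(X_i')_i$ be an independent copy of $(X_i)_i$ and put $Y_i=X_i-X_i'$; this is i.i.d.\ and symmetric, with partial sums $S_j-S_j'$. Applying the symmetric bound and the triangle inequality,
\[
P(\|S_k-S_k'\|>t)\;\le\;2\,P(\|S_N-S_N'\|>t)\;\le\;4\,P\!\left(\|S_N\|>\tfrac{t}{2}\right).
\]
It remains to reverse the direction of the symmetrization, i.e.\ to bound $P(\|S_k\|>t)$ above by a universal multiple of $P(\|S_k-S_k'\|>ct)$ for a universal $c>0$; chaining this with the display then yields an inequality of the stated shape.

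This de-symmetrization is the main obstacle, and it is precisely the substance of Montgomery-Smith's argument. In a general Banach space one cannot center $S_k$ at its mean (no moment is assumed), so one works with a weak median $a_k$ of $S_k$, controls the fluctuation $S_k-a_k$ by symmetrization, and argues that the ``drift'' cannot defeat the estimate: heuristically, either the fluctuation dominates, in which case the symmetrized bound already suffices, or the drift dominates, in which case $\|S_N\|$ is automatically even larger because the drift of $S_N$ is roughly a $\lfloor N/k\rfloor$-fold amplification of that of $S_k$. I would quote this step from~\cite{Montgomery-Smith1993-zt}, which both supplies the missing inequality and pins down the clean constants $3$ and $10$. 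Finally I would note that in the application made later in the paper the summands are uniformly bounded (differences of evaluations of a bounded kernel), so $E S_k$ exists; then elementary mean-centered symmetrization inequalities combined with the L\'evy bound above already give an inequality of the required form, at the cost of somewhat larger absolute constants, which is all that the subsequent argument needs.
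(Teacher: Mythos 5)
Your proposal is fine: the paper itself gives no proof of this lemma, stating it as a known result quoted directly from Montgomery-Smith's paper, and your argument likewise defers the essential de-symmetrization step (and the specific constants $3$ and $10$) to that same reference, with the symmetrization and L\'evy-type reduction you sketch being correct as far as they go. Since the subsequent application in the paper only needs some universal constants, your closing remark about bounded summands is also consistent with how the lemma is used.
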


Furthermore, the following inequality holds:

\begin{lemma}[\cite{Protter1995-sm}]
Under the same conditions as Lemma \ref{lem:montgomery-smith}
    \begin{align}
        \Pr\left(\max_{1\leq k\leq N}\|S_k\| > t\right) \leq 3 \max_{1\leq k\leq N}\Pr\left(\|S_k\| > \frac{t}{3}\right).
    \end{align}
\end{lemma}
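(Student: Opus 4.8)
The plan is to prove this maximal inequality by the classical first-passage (L\'evy--Ottaviani) argument; it does not actually require Montgomery--Smith's inequality, and the two lemmas are grouped only because both are used together in the later chaining step, so I would give it a short self-contained derivation (or simply cite \cite{Protter1995-sm}). Write $\beta := \max_{1\le k\le N}\Pr(\|S_k\| > t/3)$. The first step disposes of the trivial regime: if $\beta \ge 1/3$ then $3\beta \ge 1$ and the bound is vacuous, so assume henceforth that $\beta < 1/3$.

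Next I would introduce the first-passage time $\tau := \inf\{\,1\le k\le N : \|S_k\| > t\,\}$, with $\tau = \infty$ when no such index exists, which yields the disjoint decomposition $\Pr\big(\max_{1\le k\le N}\|S_k\| > t\big) = \sum_{k=1}^N \Pr(\tau = k)$. Two structural facts drive the estimate: the event $\{\tau = k\}$ is $\sigma(X_1,\dots,X_k)$-measurable, hence independent of the increment $S_N - S_k = X_{k+1} + \cdots + X_N$; and because the $X_i$ are i.i.d., this increment has the same law as $S_{N-k}$. On $\{\tau = k\} \cap \{\|S_N - S_k\| \le 2t/3\}$ the triangle inequality forces $\|S_N\| \ge \|S_k\| - \|S_N - S_k\| > t - 2t/3 = t/3$; combining this with independence and with $\Pr(\|S_{N-k}\| > 2t/3) \le \Pr(\|S_{N-k}\| > t/3) \le \beta < 1/3$ gives, for every $k$ (the case $k = N$ being immediate since then already $\|S_N\| > t$),
\begin{align}
    \Pr\big(\tau = k,\ \|S_N\| > t/3\big)\ \ge\ \Pr(\tau = k)\,\Pr\big(\|S_{N-k}\| \le 2t/3\big)\ >\ \tfrac{2}{3}\,\Pr(\tau = k).
\end{align}
Summing over the disjoint events $\{\tau = k\}$ then yields $\tfrac{2}{3}\,\Pr\big(\max_{1\le k\le N}\|S_k\| > t\big) \le \sum_{k=1}^N \Pr(\tau = k,\ \|S_N\| > t/3) \le \Pr(\|S_N\| > t/3) \le \beta$, i.e.\ the claimed bound, in fact with the sharper constant $3/2$ in place of $3$.

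I do not anticipate a genuine obstacle here — this is a textbook-level maximal inequality and every step above is elementary. The only items needing care are bookkeeping: checking that $\{\tau = k\}$ depends only on $X_1,\dots,X_k$ so that it is independent of the later block, and invoking the i.i.d.\ assumption precisely at the identification $S_N - S_k \overset{d}{=} S_{N-k}$ (for merely independent summands one would instead keep $\Pr(\|S_N - S_k\| > 2t/3)$ and recover the usual Ottaviani form).
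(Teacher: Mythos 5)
Your argument is correct, and there is nothing in the paper to compare it against: the paper does not prove this lemma at all, but simply cites it from \cite{Protter1995-sm} as a known maximal inequality of L\'evy--Ottaviani type, to be combined with Montgomery--Smith's inequality in the blocking step of the kernel consistency proof. Your self-contained derivation is the standard one and every step checks out: the trivial regime $\beta\ge 1/3$ is disposed of correctly, $\{\tau=k\}$ is indeed $\sigma(X_1,\dots,X_k)$-measurable and hence independent of $S_N-S_k$, the triangle-inequality implication on $\{\tau=k\}\cap\{\|S_N-S_k\|\le 2t/3\}$ is right, and summing over the disjoint events $\{\tau=k\}$ gives $(1-\beta)\Pr\bigl(\max_{1\le k\le N}\|S_k\|>t\bigr)\le \Pr(\|S_N\|>t/3)\le\beta$, which yields the stated bound. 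Two small remarks. First, your parenthetical claim that the argument ``in fact'' gives the constant $3/2$ is only established in the regime $\beta<1/3$ where you run the estimate; for $\beta\in[1/3,2/3)$ the trivial bound only rescues the constant $3$, not $3/2$, so as a statement about the lemma with constant $3/2$ it is a slight overclaim (harmless, since the lemma only asserts the constant $3$). Second, as you note yourself, the i.i.d.\ hypothesis enters only through $S_N-S_k\overset{d}{=}S_{N-k}$; with merely independent summands one bounds $\Pr(\|S_N-S_k\|>2t/3)\le 2\beta$ instead and recovers the same conclusion, which is the form usually found in the cited reference.
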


These two results combined, the following inequality holds:
\begin{align}
        \Pr\left(\max_{1\leq k\leq N}\|S_k\| > t\right) \leq 9\Pr\left(\|S_N\| > \frac{t}{30}\right).
    \end{align}

\paragraph{Talagrand's Inequality}
The following is a version of Talagrand's inequality \cite{Talagrand1994-md, Talagrand1996-tj} taken from \cite{Gine2001-be}.
\begin{theorem}[\cite{Gine2001-be}] \label{th:talagrand}
    Let $\{X_i\}_{i=1}^N$ denote an i.i.d. sample from a probability measure $P$. Let $\mathcal{F}$ be a uniformly bounded VC-subgraph function class, and let $\sigma^2$ and $U$ satisfy $\sigma^2 \geq \sup_{f\in\mathcal{F}} \mathrm{Var}_P(f)$, $U \geq \sup_{f\in\mathcal{F}}\|f\|_{\infty}$, $0<\sigma<U/2$, and $\sqrt{N}\sigma\geq U\sqrt{\log(U/\sigma)}$, where $\mathrm{E}_P$ denotes the expectation with respect to $P$. Then, there exist constants $C$ and $L$ (which depend on the VC properties of $\mathcal{F}$) such that for $C_2 \geq C$,
\begin{align}
    &\Pr\left\{ \sup_{f\in\mathcal{F}}\left|\sum_{i=1}^N (f(X_i) - \mathrm{E}_P f)\right| > C_2\sqrt{N}\sigma\sqrt{\log\left(\frac{U}{\sigma}\right)} \right\} \nonumber \\
   & \leq L\exp\left(-\frac{C_2\log(1+C_2/(4L))}{L}\log\left(\frac{U}{\sigma}\right)\right).
\end{align}
\end{theorem}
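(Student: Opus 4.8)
The plan is to prove the stated inequality (Theorem~\ref{th:talagrand}) by combining two classical ingredients in the usual way: Talagrand's concentration inequality for the supremum of an empirical process, which controls the fluctuations of
\[
Z_N:=\sup_{f\in\mathcal F}\Bigl|\sum_{i=1}^N\bigl(f(X_i)-\mathrm E_P f\bigr)\Bigr|
\]
around its mean, together with a \emph{local} maximal inequality for VC-subgraph classes, which bounds $\mathrm E Z_N$ in terms of $\sigma$, $U$ and the VC characteristics of $\mathcal F$. Throughout I would write $L_\sigma:=\log(U/\sigma)$ and $\rho:=\sqrt N\,\sigma/(U\sqrt{L_\sigma})$, so that the hypothesis $\sqrt N\,\sigma\ge U\sqrt{L_\sigma}$ reads $\rho\ge 1$; this is precisely the condition that makes the ``variance'' term dominate and couples the two ingredients.

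\textbf{Step 1: bounding the mean.} First I would bound $\mathrm E Z_N$ by symmetrization followed by Dudley's entropy integral. Using that a uniformly bounded VC-subgraph class with constant envelope $U$ satisfies $\sup_Q N(\mathcal F,L^2(Q),\varepsilon U)\le (A/\varepsilon)^{\nu}$ for constants $A,\nu$ depending only on its VC index, and truncating the chaining at scale $\sigma/U$, one obtains a constant $L_1=L_1(A,\nu)$ with
\[
\mathrm E Z_N\le L_1\bigl(\sqrt N\,\sigma\sqrt{L_\sigma}+U\,L_\sigma\bigr).
\]
Since $\rho\ge 1$ gives $U\,L_\sigma\le \sqrt N\,\sigma\sqrt{L_\sigma}$, this yields $\mathrm E Z_N\le 2L_1\sqrt N\,\sigma\sqrt{L_\sigma}$. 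I would then fix $L\ge 2L_1$, also large enough to dominate the universal constant appearing in Step 2, so that in particular $\mathrm E Z_N\le L\sqrt N\,\sigma\sqrt{L_\sigma}$.

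\textbf{Step 2: concentration and simplification.} Next I would apply Talagrand's concentration inequality in its sharpened Bennett-type form: with $V:=N\sigma^2+2U\,\mathrm E Z_N$ and $h(x):=(1+x)\log(1+x)-x$,
\[
\Pr\bigl(Z_N\ge \mathrm E Z_N+t\bigr)\le \exp\!\Bigl(-\tfrac{V}{U^2}\,h\!\bigl(\tfrac{Ut}{V}\bigr)\Bigr),\qquad t>0.
\]
Fixing $C_2\ge C$ for a threshold $C$ chosen as a function of $L$, and setting $t:=C_2\sqrt N\,\sigma\sqrt{L_\sigma}$, we have $t\ge 2\,\mathrm E Z_N$, hence $\Pr(Z_N>t)\le \Pr\bigl(Z_N\ge \mathrm E Z_N+t/2\bigr)\le \exp\!\bigl(-\tfrac{V}{U^2}h(\tfrac{Ut}{2V})\bigr)$. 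The assumption $\rho\ge 1$ also gives $2U\,\mathrm E Z_N\le 2LU\sqrt N\,\sigma\sqrt{L_\sigma}\le 2L\,N\sigma^2$, so $N\sigma^2\le V\le (1+2L)N\sigma^2$; consequently $Ut/(2V)\asymp C_2/\rho$ and $t/U=C_2\rho L_\sigma$. Using the elementary bounds $h(x)\ge \tfrac12 x\log(1+x)$ (for all $x\ge 0$) and $h(x)\ge x^2/4$ (for $0\le x\le 1$), one gets
\[
\tfrac{V}{U^2}\,h\!\bigl(\tfrac{Ut}{2V}\bigr)\ \gtrsim\ \max\!\bigl(C_2^2\,L_\sigma,\ C_2\,\rho\,L_\sigma\log(1+C_2/\rho)\bigr).
\]
If $\rho\le 4L$, the second term is $\gtrsim C_2 L_\sigma\log(1+C_2/(4L))$; if $\rho>4L$, the first term is $\gtrsim C_2^2 L_\sigma\ge C_2\log(1+C_2/(4L))L_\sigma$ once $C_2$ exceeds a threshold depending on $L$. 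Absorbing all constants into a single $L$ (enlarging it if necessary) and defining $C$ accordingly, both cases give
\[
\Pr\Bigl(Z_N>C_2\sqrt N\,\sigma\sqrt{L_\sigma}\Bigr)\le L\exp\!\Bigl(-\tfrac{C_2\log(1+C_2/(4L))}{L}\,\log\tfrac{U}{\sigma}\Bigr),
\]
which is the asserted bound.

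\textbf{Main obstacle.} I expect the delicate step to be Step 1: one must run Dudley's chaining so that the entropy integral is cut exactly at the level $\sigma/U$, which is what produces the precise pair of terms $\sqrt N\sigma\sqrt{\log(U/\sigma)}$ and $U\log(U/\sigma)$ with the correct powers of $\sigma$ and of $\log(U/\sigma)$; a careless truncation loses one of these terms or the right logarithmic power, after which the final bound degrades below the stated one. The remaining work---propagating $L_1$, the universal Talagrand constant, and the thresholds through the elementary estimates on $h$ so that a single constant $L$ governs both the prefactor and the exponent exactly as stated---is routine but requires careful bookkeeping, especially near the boundary $\rho\asymp 1$ of the admissible regime.
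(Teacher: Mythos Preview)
The paper does not prove Theorem~\ref{th:talagrand}; it is quoted verbatim as a known result from \cite{Gine2001-be} and used as a black box in the proof of Theorem~\ref{th:kde_consistency}. So there is no ``paper's own proof'' to compare against.

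That said, your sketch is essentially the standard route to such a bound (and indeed the route taken in \cite{Gine2001-be}): bound $\mathrm E Z_N$ via symmetrization and a localized entropy integral for a VC-subgraph class, then feed that expectation bound into Talagrand's Bennett-type concentration inequality and simplify using the hypothesis $\sqrt N\sigma\ge U\sqrt{\log(U/\sigma)}$. The two-term expectation bound $\mathrm E Z_N\lesssim \sqrt N\sigma\sqrt{L_\sigma}+U L_\sigma$ and the observation that $\rho\ge1$ collapses it to a single term are exactly the heart of the argument. The algebra in Step~2 is a bit loose---tracking the constants through the cases $\rho\le 4L$ versus $\rho>4L$ so that a \emph{single} $L$ governs both the prefactor and the exponent in the precise form stated requires more care than you indicate, and the bound $h(x)\ge x^2/4$ holds only for $0\le x\le 2$, not $0\le x\le 1$ as you wrote (though this works in your favor)---but none of this is a genuine gap, just the ``routine bookkeeping'' you already flagged.
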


Now we show that $\hat{f}_{N,h}(x)$ converges uniformly to $f_h(x)$. Consider the difference:

\begin{equation}
    |\hat{f}_{N,h}(x) - f_h(x)| = \left| \frac{1}{N} \sum_{i=1}^N K_2(x \mid X_i,h) - \int_{\mathcal{O}(\mathcal{E},\Omega)} K_2(x \mid y,h) f(y) d\nu(y) \right|.
\end{equation}

This can be rewritten as:

\begin{equation}
    |\hat{f}_{N,h}(x) - f_h(x)| = c(x,h) \left| (\mathbb{E}_N - \mathrm{E}_P) \frac{1}{h^p(2\pi)^{p/2}} \exp\left( -\frac{d(x,X)^2}{2h^2} \right) \right|,
\end{equation}
where $\mathbb{E}_N$ denotes the expectation w.r.t. the empirical measure.

We define the following class of functions:

\begin{equation}
    \mathcal{F} = \left\{ k(y \mid x,h) \coloneqq  \frac{1}{(2\pi)^{p/2}}\exp\left( -\frac{d(y,x)^2}{2h^2} \right) \middle| \ x \in \mathcal{O}(\mathcal{E},\Omega), h > 0 \right\}.
\end{equation}

We show in the subsequent subsection that the function class $\mathcal{F}$ is VC-subgraph with an envelope function $1$. Note also that the supremum over functions in $\mathcal{F}$ is measurable. This is because of the continuity of the kernel with respect to the center $x$ and $h>0$, and that the tree space has a dense countable subset.

Set $N_k=2^k$. For sufficiently large $k$,
\begin{align}
h_{N_k}^p \log h_{N_k}^{-1} \;\le\; h_{N_{k-1}}^p \log h_{N_{k-1}}^{-1}.
\end{align}
Also, $c(x,h)$ is uniformly bounded in $(x,h)$; let this bound be $M$.
Define the subclass
\begin{align}
\mathcal{F}_k
\;=\;
\big\{\, k(y \mid x,h) \;:\; x \in \mathcal{O}(\mathcal{E},\Omega),\; h_{N_k} \le h < h_{N_{k-1}} \,\big\}.
\end{align}
This is again a uniformly bounded and VC-subgraph class of functions. Then
\begin{align*}
&\Pr\!\Bigg( \max_{N_{k-1} < N \le N_k}
\sqrt{\frac{N h_N^p}{\log h_N^{-1}}}\,
\|\hat{f}_{N,h_N} - f_{h_N}\|_{\infty} > \lambda \Bigg) \\
\le& \Pr\!\Bigg( \max_{N_{k-1} < N \le N_k}
\frac{M}{\sqrt{N h_N^p \log h_N^{-1}}}\; \times \\
&\qquad\qquad\qquad\qquad
\sup_{x\in\mathcal{O}(\mathcal{E},\Omega)}
\Bigg| \sum_{i=1}^N \Big(k(X_i \mid x, h_N)
- \mathrm{E}_P k(X \mid x, h_N)\Big) \Bigg| > \lambda \Bigg) \\
\le& \Pr\!\Bigg(
\frac{M}{\sqrt{N_{k-1}\, h_{N_k}^p \log h_{N_{k-1}}^{-1}}}\,
\max_{1 \le N \le N_k}
\sup_{f\in\mathcal F_k}
\Bigg| \sum_{i=1}^N \Big(f(X_i) - \mathrm{E}_P f\Big) \Bigg|
> \lambda \Bigg) \\
\le& 9\, \Pr\!\Bigg(
\frac{M}{\sqrt{N_{k-1}\, h_{N_k}^p \log h_{N_{k-1}}^{-1}}}\,
\sup_{f\in\mathcal F_k}
\Bigg| \sum_{i=1}^{N_k} \Big(f(X_i) - \mathrm{E}_P f\Big) \Bigg|
> \frac{\lambda}{30} \Bigg).
\end{align*}

Set $C_\ast:=|\Omega_p|\,2^{-p}\pi^{-p/2}$, $U_k:=1$, and $\sigma_k^2:=C_\ast\,h_{N_{k-1}}^{p}\,\|f\|_\infty$. With this choice, all conditions of Theorem \ref{th:talagrand} are satisfied for all sufficiently large $k$: for any $f\in\mathcal F_k$ one has $\mathrm{Var}_P(f)\le \mathbb E_P f^2 \le \|f\|_\infty\int k^2\,d\nu \le C_\ast \|f\|_\infty h^p \le \sigma_k^2$; $\mathcal F_k$ is uniformly bounded with envelope $U_k=1$ and is VC-subgraph; since $h_{N_{k-1}}\downarrow0$, we have $0<\sigma_k<U_k/2$ for large $k$; and the bandwidth regime gives $N_k\sigma_k^2/\log(1/\sigma_k)\to\infty$, hence $\sqrt{N_k}\sigma_k \ge U_k\sqrt{\log(U_k/\sigma_k)}$ eventually.

Furthermore, there exists a constant $A>0$ (depending only on $p$, $\check c$, $C_\ast$, and $\|f\|_\infty$) such that, for all sufficiently large $k$,
\begin{align}
\sqrt{N_k}\,\sigma_k\,\sqrt{\log\!\left(\frac{U_k}{\sigma_k}\right)}
\;\le\;
A\,\sqrt{\,N_{k-1}\,h_{N_k}^{p}\,\log\!\big(h_{N_k}^{-1}\big)}. \label{eq:bound_nsl}
\end{align}

Choose $\lambda := 30\,M\,C_2\,A$ (with $C_2$ as in Theorem~\ref{th:talagrand}). Then, by combining the above results,
\begin{align}
&\Pr\!\left( \max_{N_{k-1} < N \le N_k}
\sqrt{\frac{N h_N^p}{\log h_N^{-1}}}\,
\|\hat{f}_{N,h_N} - f_{h_N}\|_{\infty}
> 30MC_2A \right)
\label{eq:prob} \\
&\qquad\le\; 9\,L\,\exp\!\left(
-\frac{C_2 \log\!\big( 1 + \tfrac{C_2}{4L} \big)}{L}\,
\log\!\left( \frac{1}{\sigma_k} \right) \right).
\end{align}
From our assumptions,
\[
\frac{\log (1/\sigma_k)}{\log k}
=\frac{\tfrac12\log(\|f\|_\infty^{-1}) - \tfrac12\log C_\ast
+ \tfrac{p}{2}\log\!\big(h_{2^{k-1}}^{-1}\big)}
{\log\log 2^k}\;\longrightarrow\;\infty.
\]
Hence there exists $k_0$ such that for all $k\ge k_0$,
\[
\frac{C_2 \log\!\big( 1 + \tfrac{C_2}{4L} \big)}{L}\,
\log\!\Big( \tfrac{1}{\sigma_k} \Big) \;\ge\; 2\log k,
\]
and therefore
\[
\sum_{k=k_0}^{\infty}
\exp\!\left(
-\frac{C_2 \log\!\left( 1 + \tfrac{C_2}{4L} \right)}{L}\,
\log\!\left( \tfrac{1}{\sigma_k} \right) \right)
\;\le\; \sum_{k=k_0}^{\infty} \frac{1}{k^{2}} \;<\;\infty.
\]
Thus, the sequence of probabilities in \eqref{eq:prob} is summable.

Let $Y \coloneqq \limsup Y_N$, where $Y_N \coloneqq \sqrt{\dfrac{N h_N^p}{\log h_N^{-1}}} \, \|\hat{f}_{N,h_N} - f_{h_N}\|_{\infty}$. By the Borel--Cantelli lemma, we have $P(Y > \lambda) = 0$.
Therefore, in particular,
\begin{align}
    \|\hat{f}_{N,h_N} - f_{h_N}\|_{\infty} \to 0 \quad \text{almost surely}.
\end{align}

\subsection{Proof that $\mathcal{F}$ is VC-subgraph}\label{subsec:VC-subgraph}

The class $\mathcal{F}$ is VC-subgraph if the following class is VC-subgraph:
\begin{align}
    \mathcal{D} = \left\{ d(y,x)/h  \middle| \ x \in \mathcal{O}(\mathcal{E},\Omega), h>0\right\}.
\end{align}

This is because of the monotonicity of the function $x\mapsto\exp(-x^2)$ for $x\geq0$. We will use some geometrical properties of the geodesics to prove that this $\mathcal{D}$ is VC-subgraph.

Let $P_k$ denote the set of ordered $(k+1)$-partitions of $\mathcal{E}$. For $A\in P_k$, we denote by $A_i~(i=0,\ldots,k)$ the $i$-th partition. Let $\mathcal{E}(t)$ denote the set of active axes of a point $t$ (those with strictly positive coordinates). For axis sets $(\mathcal{E}(t),\mathcal{E}(t^\prime))$ and $(A,B)\in P_k \times P_k$, we define $(A,B)\cap (\mathcal{E}(t),\mathcal{E}(t^\prime))$ to be the pair of sequences $((A_0\cap\mathcal{E}(t), \ldots, A_k\cap\mathcal{E}(t)), (B_0\cap\mathcal{E}(t^\prime), \ldots, B_k\cap\mathcal{E}(t^\prime)))$. We define $(A,B)\in P_k\times P_k$ to be {\it proper} for points $t$ and $t^\prime$, if the following two conditions are satisfied:
\begin{align}
    \text{(P1)} \quad & \text{$A_0\cap\mathcal{E}(t) = B_0\cap\mathcal{E}(t^\prime)$, and for all $i>j>0$}, \nonumber \\ &\text{$A_i\cap\mathcal{E}(t)$ and $B_j\cap\mathcal{E}(t^\prime)$ are nonempty and compatible},\nonumber \\
    \text{(P2)} \quad &  \frac{\sum_{l\in A_1} {t_l^2}}{\sum_{m\in B_1} {t_m^\prime}^2} \leq \frac{\sum_{l\in A_2} {t_l^2}}{\sum_{m\in B_2} {t_m^\prime}^2} \leq \cdots \leq \frac{\sum_{l\in A_k} {t_l^2}}{\sum_{m\in B_k} {t_m^\prime}^2},\nonumber
\end{align}
where  $t_j$ (resp. $t_m^\prime$) denotes the $j$-th (resp. $m$-th) coordinate of $t$ (resp. $t^\prime$). 

Now, define $L_{A, B}(t,t^\prime)$ to be the length of the shortest path from $t$ to $t^\prime$ going through the support sequence if $(A, B)$ is proper for $t$ and $t^\prime$, and otherwise the distance of the cone-path. We then have the following relationship:
\begin{align}
    d(t,t^\prime) = \min_{k=0,\ldots,p} \min_{(A,B)\in P_k\times P_k} L_{A,B}(t,t^\prime).
\end{align}

We can further decompose $L_{A,B}$ into the following three parts:
\begin{align}
    L_{A,B}(t,t^\prime) =& \max\{f_{A,B}(t\mid t^\prime), g_{A,B}(t\mid t^\prime), h_{A,B}(t\mid t^\prime)\},
\end{align}
where,
\begin{align}
    f_{A,B}(t\mid t^\prime) =& \sqrt{\sum_{j\in A_0}(t_j - t_j^\prime)^2 + \sum_{i=1}^k \left(\sqrt{\sum_{l\in A_i}t_l^2} + \sqrt{\sum_{m\in B_i} {t_m^\prime}^2}\right)^2}, \\
    g_{A,B}(t\mid t^\prime) =& \left(1 - \prod_{i=1}^{k-1} \mathbf{1}\left(  \frac{\sum_{l\in A_i} {t_l^2}}{\sum_{m\in B_i} {t_m^\prime}^2} \leq \frac{\sum_{l\in A_{i+1}} {t_l^2}}{\sum_{m\in B_{i+1}} {t_m^\prime}^2} \right)\right) \times d_{\mathrm{cone}}(t,t^\prime), \\
    h_{A,B}(t\mid t^\prime) =& \left(1 - \mathbf{1}(\text{($(A,B)\cap(\mathcal{E}(t), \mathcal{E}(t^\prime))$) satisfies (P1)})\right) \times d_{\mathrm{cone}}(t,t^\prime).
\end{align}
Here, $d_{\mathrm{cone}}(t,t^\prime)$ represents the cone-path distance between two points $t$ and $t^\prime$:
\begin{align}
    d_{\mathrm{cone}}(t,t^\prime) = \sqrt{\sum_{l\in\mathcal{E}(t)} t_l^2} + \sqrt{\sum_{m\in\mathcal{E}(t^\prime)} {t_m^\prime}^2}.
\end{align}
Note that $f_{A,B}(t\mid t^\prime)\leq d_{\mathrm{cone}}(t,t^\prime)$. We adopt the convention that an empty product equals $1$ (so $g_{A,B}= 0$ when $k=1$).

The function $f_{A, B}$ gives the length of the shortest path from $t$ to $t^\prime$ when the path $(A, B)$ is proper. $h_{A,B}$ returns 0 if $(A,B)\cap(\mathcal{E}(t),\mathcal{E}(t^\prime))$ is a valid support sequence, and otherwise $d_{\mathrm{cone}}(t,t^\prime)$. Similarly, $g_{A,B}$ returns 0 if $(A,B)$ satisfies (P2), and otherwise $d_{\mathrm{cone}}(t,t^\prime)$.

Combining all of the above results, we can see that $\mathcal{D}$ is VC-subgraph if all of the following classes are VC-subgraph:
\begin{align}
    F_{A,B} =& \{t\mapsto f_{A,B}(t,t^\prime)/h \mid t^\prime \in \mathcal{O}(\mathcal{E},\Omega), h>0\},  \\
    G_{A,B} =& \{t\mapsto g_{A,B}(t,t^\prime)/h \mid t^\prime \in \mathcal{O}(\mathcal{E},\Omega), h>0\},  \\
    H_{A,B} =& \{t\mapsto h_{A,B}(t,t^\prime)/h \mid t^\prime \in \mathcal{O}(\mathcal{E},\Omega), h>0\}. 
\end{align}

We prove these in the following.
First, since the orthant space is a finite union of top-dimensional nonnegative orthants, the VC-subgraph dimension of a function class on the whole space is at most the sum of the VC-subgraph dimensions of its restrictions to the individual orthants. 
This observation leads to the conclusion that $t\mapsto f_{A,B}(t\mid t^\prime)/h$ and $t\mapsto d_{\mathrm{cone}}(t,t^\prime)/h$ are VC-subgraph. For $h_{A,B}(t,t^\prime)/h$, on each orthant it coincides pointwise either with the zero function (when (P1) holds) or with $d_{\mathrm{cone}}(t,t^\prime)/h$ (when (P1) fails), so it is also VC-subgraph.

For the function class $G_{A,B}$, let $d_C$ denote the VC-subgraph dimension of 
\[
\left\{ t \mapsto \frac{d_{\mathrm{cone}}(t,t^\prime)}{h} \ \middle| \ t^\prime \in \mathcal{O}(\mathcal{E},\Omega),\ h>0 \right\}.
\]
We show that the VC-subgraph dimension of $G_{A,B}$ is finite. First, let
\begin{align}
\theta_0(t,t^\prime) &:= 0,\\
\theta_i(t,t^\prime) &:= 1-\mathbf{1}\left(
\frac{\sum_{l \in A_i} t_l^2}{\sum_{l \in A_{i+1}} t_l^2}
\le
\frac{\sum_{m \in B_i} {t_m^\prime}^{2}}{\sum_{m \in B_{i+1}} {t_m^\prime}^{2}}
\right), \quad i=1,\ldots,k-1,
\end{align}
Then,
\begin{align}
g_{A,B}(t \mid t^\prime)
&= \left( \max_{i=0,\ldots,k-1} \theta_i(t,t^\prime) \right) d_{\mathrm{cone}}(t,t^\prime).
\end{align}
It suffices to bound the VC-subgraph dimension for a single index $i (\geq 1)$:
\begin{align}
\mathcal{G}_i
&:= \Big\{\, t \mapsto \theta_i(t,t^\prime)\,\frac{d_{\mathrm{cone}}(t,t^\prime)}{h}
\ \Big|\ t^\prime \in \mathcal{O}(\mathcal{E},\Omega),\ h>0 \,\Big\} \label{eq:Gi-theta}\\
&= \Big\{\, t \mapsto \Big( 1 - \mathbf{1}\!\big( c_i(t) \le b_i(t^\prime) \big) \Big)
\frac{d_{\mathrm{cone}}(t,t^\prime)}{h}
\ \Big|\ t^\prime \in \mathcal{O}(\mathcal{E},\Omega),\ h>0 \,\Big\}, \label{eq:Gi-cb}
\end{align}
where
\begin{align}
c_i(t) := \frac{\sum_{l \in A_i} t_l^2}{\sum_{l \in A_{i+1}} t_l^2},
\qquad
b_i(t^\prime) := \frac{\sum_{m \in B_i} {t_m^\prime}^{2}}{\sum_{m \in B_{i+1}} {t_m^\prime}^{2}}.
\end{align}

Consider any set of $d_C+2$ labeled points $\left(t^{(1)},\mu^{(1)}\right),\ldots,\left(t^{(d_C+2)},\mu^{(d_C+2)}\right)$, and reorder the $t^{(j)}$ so that
\begin{align}
c_i\left(t^{(1)}\right) \ge \cdots \ge c_i\left(t^{(d_C+2)}\right).
\end{align}
If $\mu^{(d_C+2)} \leq 0$, then this last point is always included in the subgraph $\{z<\cdot\}$ for every member of $\mathcal{G}_i$, thus the set is not shattered.
If $\mu^{(d_C+2)}>0$, then to include the last point we must have
\begin{align}
1-\mathbf{1}\left(c_i\!\left(t^{(d_C+2)}\right)\le b_i\!\left(t^\prime\right)\right)=1
\quad\text{i.e.,}\quad
c_i\!\left(t^{(d_C+2)}\right)>b_i\!\left(t^\prime\right).
\end{align}
Since $c_i\!\left(t^{(1)}\right)\ge\cdots\ge c_i\!\left(t^{(d_C+2)}\right)$, it follows that
\begin{align}
c_i\!\left(t^{(j)}\right)\ge c_i\!\left(t^{(d_C+2)}\right)>b_i\!\left(t^\prime\right)
\quad\text{for } j=1,\ldots,d_C+1,
\end{align}
and thus
\begin{align}
1-\mathbf{1}\left(c_i\!\left(t^{(j)}\right)\le b_i\!\left(t^\prime\right)\right)=1
\quad\text{for } j=1,\ldots,d_C+1.
\end{align}
Therefore, on these $d_C+1$ points the function reduces to
\begin{align}
t\longmapsto \frac{d_{\mathrm{cone}}(t,t^\prime)}{h}.
\end{align}
By the definition of $d_C$, the class $\left\{d_{\mathrm{cone}}(\cdot,t^\prime)/h\right\}$ cannot shatter $d_C+1$ points. Hence, once the last point is included, the remaining $d_C+1$ points cannot be labeled arbitrarily. Therefore, no set of size $d_C+2$ is shattered by $\mathcal{G}_i$, and hence the VC-subgraph dimension of $\mathcal{G}_i$ is finite. Since $G_{A,B}=\max_{i=1,\ldots,k-1} g^{(i)}_{A,B}$ is a finite maximum of classes with finite VC-subgraph dimension, $G_{A,B}$ has finite VC-subgraph dimension as well.


All of the above results imply that $\mathcal{D}$ is VC-subgraph, hence $\mathcal{F}$ is VC-subgraph as well. Since $\mathcal{F}$ has envelope $U\equiv 1$, the standard entropy bound for uniformly bounded VC-subgraph classes yields
\begin{align}
\sup_{Q} N\!\left(\epsilon,\ \mathcal{F},\ L_2(Q)\right)
\;\le\; \left(\frac{A}{\epsilon}\right)^{v},
\end{align}
for some constants $A,v>0$ depending only on the VC characteristics of $\mathcal{F}$.

\end{appendix}

\section*{Funding}
Yuki Takazawa was supported by JSPS KAKENHI (Grant Numbers 22KJ1131, 25K24364). Tomonari Sei was supported by JSPS KAKENHI (Grant Number 21K11781).

\bibliographystyle{unsrtnat}
\bibliography{LCDTreeSpacePaper}  

\begin{thebibliography}{33}
\providecommand{\natexlab}[1]{#1}
\providecommand{\url}[1]{\texttt{#1}}
\expandafter\ifx\csname urlstyle\endcsname\relax
  \providecommand{\doi}[1]{doi: #1}\else
  \providecommand{\doi}{doi: \begingroup \urlstyle{rm}\Url}\fi

\bibitem[Billera et~al.(2001)Billera, Holmes, and Vogtmann]{Billera2001-vl}
Louis~J. Billera, Susan~P. Holmes, and Karen Vogtmann.
\newblock Geometry of the space of phylogenetic trees.
\newblock \emph{Advances in Applied Mathematics}, 27\penalty0 (4):\penalty0 733--767, 2001.

\bibitem[Owen and Provan(2011)]{Owen2011-nq}
Megan Owen and J.~Scott Provan.
\newblock A fast algorithm for computing geodesic distances in tree space.
\newblock \emph{IEEE/ACM Transactions on Computational Biology and Bioinformatics}, 8\penalty0 (1):\penalty0 2--13, 2011.

\bibitem[Benner et~al.(2014)Benner, Bačak, and Bourguignon]{Benner2014-xx}
Philipp Benner, Miroslav Bačak, and Pierre~Yves Bourguignon.
\newblock Point estimates in phylogenetic reconstructions.
\newblock \emph{Bioinformatics}, 30\penalty0 (17):\penalty0 i534--i540, 2014.

\bibitem[Willis(2019)]{Willis2019-fx}
Amy Willis.
\newblock Confidence sets for phylogenetic trees.
\newblock \emph{Journal of the American Statistical Association}, 114\penalty0 (525):\penalty0 235--244, 2019.

\bibitem[Nye(2011)]{Nye2011-is}
Tom M.~W. Nye.
\newblock Principal components analysis in the space of phylogenetic trees.
\newblock \emph{Annals of Statistics}, 39\penalty0 (5):\penalty0 2716--2739, 2011.

\bibitem[Nye et~al.(2017)Nye, Tang, Weyenberg, and Yoshida]{Nye2017-vx}
Tom M~W Nye, Xiaoxian Tang, Grady Weyenberg, and Ruriko Yoshida.
\newblock Principal component analysis and the locus of the {F}réchet mean in the space of phylogenetic trees.
\newblock \emph{Biometrika}, 104\penalty0 (4):\penalty0 901--922, 2017.

\bibitem[Weyenberg et~al.(2014)Weyenberg, Huggins, Schardl, Howe, and Yoshida]{Weyenberg2014-wd}
Grady Weyenberg, Peter~M. Huggins, Christopher~L. Schardl, Daniel~K. Howe, and Ruriko Yoshida.
\newblock {KDETREES}: non-parametric estimation of phylogenetic tree distributions.
\newblock \emph{Bioinformatics}, 30\penalty0 (16):\penalty0 2280--2287, 2014.

\bibitem[Weyenberg et~al.(2017)Weyenberg, Yoshida, and Howe]{Weyenberg2017-kh}
Grady Weyenberg, Ruriko Yoshida, and Daniel~K. Howe.
\newblock Normalizing kernels in the {Billera-Holmes-Vogtmann} treespace.
\newblock \emph{IEEE/ACM Transactions on Computational Biology and Bioinformatics}, 14\penalty0 (6):\penalty0 1359--1365, 2017.

\bibitem[Takazawa and Sei(2024)]{Takazawa2024-ed}
Yuki Takazawa and Tomonari Sei.
\newblock Maximum likelihood estimation of log-concave densities on tree space.
\newblock \emph{Statistics and Computing}, 34\penalty0 (2):\penalty0 1--24, 2024.

\bibitem[Dümbgen et~al.(2011)Dümbgen, Samworth, and Schuhmacher]{Dumbgen2011-mq}
Lutz Dümbgen, Richard Samworth, and Dominic Schuhmacher.
\newblock Approximation by log-concave distributions, with applications to regression.
\newblock \emph{Annals of Statistics}, 39\penalty0 (2):\penalty0 702--730, 2011.

\bibitem[Giné and Guillou(2001)]{Gine2001-be}
E~Giné and A~Guillou.
\newblock On consistency of kernel density estimators for randomly censored data: rates holding uniformly over adaptive intervals.
\newblock \emph{Annales de l'Institut Henri Poincar\'e, Probabilites et Statistiques}, 37:\penalty0 503--522, 2001.

\bibitem[Miller et~al.(2015)Miller, Owen, and Provan]{Miller2015-jm}
Ezra Miller, Megan Owen, and J.~Scott Provan.
\newblock Polyhedral computational geometry for averaging metric phylogenetic trees.
\newblock \emph{Advances in Applied Mathematics}, 68:\penalty0 51--91, 2015.

\bibitem[Felsenstein(2004)]{Felsenstein2004-yp}
Joseph Felsenstein.
\newblock \emph{Inferring phylogenies}.
\newblock Sinauer Associates, Sunderland, MA, 2004.

\bibitem[Bačák(2014)]{Bacak2014-nn}
Miroslav Bačák.
\newblock \emph{Convex analysis and optimization in Hadamard spaces}, volume~22 of \emph{De Gruyter Series in Nonlinear Analysis and Applications}.
\newblock De Gruyter, Berlin, 2014.

\bibitem[Hayashi(2021)]{Hayashi2018}
Koyo Hayashi.
\newblock A polynomial time algorithm to compute geodesics in {CAT (0)} cubical complexes.
\newblock \emph{Discrete \& Computational Geometry}, 65:\penalty0 636--654, 2021.

\bibitem[Gromov(1987)]{Gromov1987-vn}
M.~Gromov.
\newblock Hyperbolic groups.
\newblock In S.~M. Gersten, editor, \emph{Essays in Group Theory}, pages 75--263. Springer New York, New York, NY, 1987.
\newblock ISBN 978-1-4613-9586-7.

\bibitem[Semple and Steel(2003)]{Semple2003-ad}
Charles Semple and Mike Steel.
\newblock \emph{Phylogenetics}, volume~24 of \emph{Oxford Lecture Series in Mathematics and Its Applications}.
\newblock Oxford University Press, London, England, 2003.

\bibitem[Cule et~al.(2010)Cule, Samworth, and Stewart]{Cule2010-xh}
Madeleine Cule, Richard Samworth, and Michael Stewart.
\newblock Maximum likelihood estimation of a multi-dimensional log-concave density.
\newblock \emph{Journal of the Royal Statistical Society Series B: Statistical Methodology}, 72\penalty0 (5):\penalty0 545--607, 2010.

\bibitem[Samworth(2018)]{Samworth2018-gf}
Richard~J. Samworth.
\newblock Recent progress in log-concave density estimation.
\newblock \emph{Statistical Science}, 33\penalty0 (4):\penalty0 493--509, 2018.

\bibitem[Villani(2016)]{Villani2016-vm}
Cedric Villani.
\newblock \emph{Optimal Transport}.
\newblock Grundlehren der mathematischen Wissenschaften. Springer, Berlin, Germany, 2016.

\bibitem[Varadarajan(1958)]{Varadarajan1958-co}
V~S Varadarajan.
\newblock On the convergence of sample probability distributions.
\newblock \emph{Sankhya}, 19\penalty0 (1/2):\penalty0 23--26, 1958.

\bibitem[Cule and Samworth(2010)]{Cule2010-uy}
Madeleine Cule and Richard Samworth.
\newblock Theoretical properties of the log-concave maximum likelihood estimator of a multidimensional density.
\newblock \emph{Electronic Journal of Statistics}, 4:\penalty0 254--270, 2010.

\bibitem[Chen(2000{\natexlab{a}})]{Chen2000-qd}
Song~Xi Chen.
\newblock Probability density function estimation using gamma kernels.
\newblock \emph{Annals of the Institute of Statistical Methematics}, 52:\penalty0 471--480, 2000{\natexlab{a}}.

\bibitem[Chen(2000{\natexlab{b}})]{Chen2000-sv}
Song~Xi Chen.
\newblock Beta kernel smoothers for regression curves.
\newblock \emph{Statistica Sinica}, 10\penalty0 (1):\penalty0 73--91, 2000{\natexlab{b}}.

\bibitem[Xu and Samworth(2021)]{Xu2021-lu}
Min Xu and Richard~J Samworth.
\newblock High-dimensional nonparametric density estimation via symmetry and shape constraints.
\newblock \emph{Annals of Statistics}, 49\penalty0 (2):\penalty0 650--672, 2021.

\bibitem[Barber and Samworth(2021)]{Barber2021-os}
Rina~Foygel Barber and Richard~J Samworth.
\newblock Local continuity of log-concave projection, with applications to estimation under model misspecification.
\newblock \emph{Bernoulli: official journal of the Bernoulli Society for Mathematical Statistics and Probability}, 27\penalty0 (4):\penalty0 2437--2472, 2021.

\bibitem[Nye(2020)]{Nye2020-xb}
Tom M.~W. Nye.
\newblock Random walks and brownian motion on cubical complexes.
\newblock \emph{Stochastic Processes and their Applications}, 130\penalty0 (4):\penalty0 2185--2199, 2020.

\bibitem[Dümbgen et~al.(2010)Dümbgen, Samworth, and Schuhmacher]{Dumbgen2010-uj}
Lutz Dümbgen, Richard Samworth, and Dominic Schuhmacher.
\newblock Approximation by log-concave distributions with applications to regression.
\newblock Technical report, IMSV, University of Bern, 2010.

\bibitem[Rao(1962)]{Rao1962-zt}
R~Ranga Rao.
\newblock Relations between weak and uniform convergence of measures with applications.
\newblock \emph{Annals of Mathematical Statistics}, 33\penalty0 (2):\penalty0 659--680, 1962.

\bibitem[Montgomery-Smith(1993)]{Montgomery-Smith1993-zt}
S~Montgomery-Smith.
\newblock Comparison of sums of independent identically distributed random variables.
\newblock \emph{Probability and Mathematical Statistics}, 14:\penalty0 281--285, 1993.

\bibitem[Protter(1995)]{Protter1995-sm}
Philip Protter.
\newblock \emph{Random series and stochastic integrals: single and multiple}.
\newblock Probability and Its Applications. Birkhäuser, Cambridge, MA, 1995.

\bibitem[Talagrand(1994)]{Talagrand1994-md}
M~Talagrand.
\newblock Sharper bounds for gaussian and empirical processes.
\newblock \emph{Annals of Probability}, 22\penalty0 (1):\penalty0 28--76, 1994.

\bibitem[Talagrand(1996)]{Talagrand1996-tj}
Michel Talagrand.
\newblock New concentration inequalities in product spaces.
\newblock \emph{Inventiones Mathematicae}, 126\penalty0 (3):\penalty0 505--563, 1996.

\end{thebibliography}
\end{document}